\newtheorem{thm}{Theorem}[section]
\newtheorem{prop}[thm]{Proposition}
\newtheorem{lem}[thm]{Lemma}
\newtheorem{cor}[thm]{Corollary}
\newtheorem{defi}[thm]{Definition}
\newtheorem{fact}[thm]{Fact}
\newtheorem{rem}[thm]{Remark}
\newcommand{\dbar}{{\mkern3mu\mathchar'26\mkern-12mu d}}
\numberwithin{equation}{section}
\begin{document}
	
\title[Asymptotics of singular values for quantum derivatives]{Asymptotics of singular values\\ for quantum derivatives}

\author{Rupert L. Frank}
\address[Rupert L. Frank]{Mathe\-matisches Institut, Ludwig-Maximilans Universit\"at M\"unchen, The\-resienstr.~39, 80333 M\"unchen, Germany, and Munich Center for Quantum Science and Technology, Schel\-ling\-str.~4, 80799 M\"unchen, Germany, and Mathematics 253-37, Caltech, Pasa\-de\-na, CA 91125, USA}
\email{r.frank@lmu.de}

\author{Fedor Sukochev}
\address[Fedor Sukochev]{School of Mathematics and Statistics, University of New South Wales, Kensington, 2052, NSW, Australia.}
\email{f.sukochev@unsw.edu.au}

\author{Dmitriy Zanin}
\address[Dmitriy Zanin]{School of Mathematics and Statistics, University of New South Wales, Kensington, 2052, NSW, Australia.}
\email{d.zanin@unsw.edu.au}

\begin{abstract} 
	We obtain Weyl type asymptotics for the quantised derivative $\dbar f$ of a function $f$ from the homgeneous Sobolev space $\dot{W}^1_d(\mathbb{R}^d)$ on $\mathbb{R}^d.$ The asymptotic coefficient $\|\nabla f\|_{L_d(\mathbb R^d)}$ is equivalent to the norm of $\dbar f$ in the principal ideal $\mathcal{L}_{d,\infty},$ thus, providing a non-asymptotic, uniform bound on the spectrum of $\dbar f.$ Our methods are based on the $C^{\ast}$-algebraic notion of the principal symbol mapping on $\mathbb{R}^d$, as developed recently by the last two authors and collaborators.
\end{abstract}

\maketitle

\section{Introduction and main results}

The topic of quantum derivatives lies at the intersection of various fields in mathematics, from noncommutative geometry over spectral and operator theory to real and harmonic analysis. Our specific goal in this paper is to derive spectral asymptotics for quantum derivatives, and we will do this under minimal regularity assumptions. These asymptotics will be deduced from spectral asymptotics for a certain class of pseudodifferential operators, which we also prove here and which are of independent interest. 

The idea of Connes' Quantum Calculus \cite{Connes-dga, Connes-book} is to replace topological spaces with $C^{\ast}$-algebras and Riemannian manifolds with spectral triples $(\mathcal{A},\mathcal{H},\mathcal{D}).$ Here, the $\ast$-algebra $\mathcal{A}$ represented on a Hilbert space $\mathcal{H}$ should be seen as a generalisation of $C^{\infty}(X),$ where $X$ is a Riemannian manifold. The operator $\mathcal{D}$ should be seen as a non-commutative version of a Dirac operator, which defines the differential calculus on the Riemannian manifold (e.g., differential forms are the linear span of $a_1[\mathcal{D},a_2]$ with $a_1,a_2\in\mathcal{A}$).

However, rough features of the manifold $X$ (e.g.\ its conformal geometry) do not need the full information about the operator $\mathcal{D}.$ Namely, Connes links conformal geometry with the sign of $\mathcal{D}$; see  \cite{Connes-dga, Connes-action}. In this context, a differential form $a_1[\mathcal{D},a_2],$ $a_1,a_2\in\mathcal{A},$ is replaced with its so-called quantised form $a_1[{\rm sgn}(\mathcal{D}),a_2].$ The operator $\dbar a=i[{\rm sgn}(\mathcal{D}),a]$  is called the quantised derivative of $a\in\mathcal{A}.$
	
	Of particular interest is the membership of $\dbar a$ to some ideal of compact operators. The compact operators on $\mathcal H$ are described by Connes as being analogous to infinitesimals, and the rate of decay of the sequence $\mu(T)=\{\mu(n,T)\}_{n=0}^\infty$ of singular values  corresponds in some way to the ``size" of the infinitesimal $T$; see \cite{Connes-book,Connes-LNM}. In this setting one can quantify the smoothness of an element $a \in \mathcal{A}$ in terms of the rate of decay of $\mu(\dbar a).$ Of particular interest are those elements $a \in \mathcal{A}$ that satisfy:
    \begin{align*}
                            \mu(n,\dbar a) &= O((n+1)^{-1/p}),\text{ or,}\\
        \sum_{n=0}^\infty \mu(n,\dbar a)^p &< \infty,\text{ or,}\\
        \sup_{n \geq 0} \frac{1}{\log(n+2)} \sum_{k=0}^n \mu(k,\dbar a)^p &< \infty\,,
    \end{align*}
    for some $p \in (0,\infty)$. The first condition stated above means that $\dbar a$ is in the weak-Schatten ideal $\mathcal{L}_{p,\infty}$, the second condition
    means that $\dbar a$ is in the Schatten ideal $\mathcal{L}_p$, and the final condition is that $|\dbar a|^p$ is in the Macaev--Dixmier ideal $\mathcal{M}_{1,\infty}$ \cite[Chapter 4, Section 2.$\beta$]{Connes-book}; see also \cite[Example 2.6.10]{LSZ-book}.

Concrete studies of quantised derivatives in various classical settings are available in the literature: for classical Riemannian manifolds we refer to \cite[Theorem 3]{Connes-action} and for noncommutative tori and for noncommutative Euclidean spaces to \cite{MSX-0,MSX}. In particular, for compact Riemannian manifolds, Connes established (see \cite[Theorem 3]{Connes-action}) that $|\dbar f|^{{\rm dim}(X)}$ belongs to Dixmier--Macaev ideal for every $f\in C^{\infty}(X).$

Let us now step back from the case of general manifolds and consider Euclidean space $\mathbb{R}^d$. We ask the following question: ``{\it For which functions $f$ on $\mathbb{R}^d,$ does their quantised derivative belong to a particular Schatten ideal?}". This question ties together several themes in operator theory and harmonic analysis. At the moment, the answer is known for the Schatten $\mathcal{L}_p$ and $\mathcal{L}_{p,\infty}$ ideals and is given in terms of some classical (Sobolev, Besov, etc) function spaces. Below, we recall a few key results.

\subsection{Spectral asymptotics for quantum derivatives}

Let us briefly set-up our notation. We work on $\mathbb R^d$ with $d\geq 2$ and denote variables by $x=(x_1,\ldots,x_d)$ and derivatives by $\nabla =(\partial_1,\ldots,\partial_d)$. We also write $D_j = -i\partial_j$. Let $N:=2^{[d/2]}$. There are Hermitian $N\times N$ matrices $\gamma_1,\ldots,\gamma_d$ such that
$$
\gamma_j \gamma_k + \gamma_k \gamma_j = 2 \delta_{j,k}
\qquad\text{for all}\ 1\leq j,k\leq d \,.
$$
Different choices of these matrices lead to equivalent results and we fix one such choice and define the Dirac operator
$$
\mathcal D = \sum_{j=1}^d \gamma_j \otimes D_j \,.
$$
This is an unbounded linear operator in the Hilbert space $\mathbb C^N\otimes L_2(\mathbb R^d)$. We are interested in its sign, defined by the functional calculus or, equivalently, by
$$
\mathrm{sgn}\, \mathcal D := \sum_{j=1}^d \gamma_j \otimes \frac{D_j}{\sqrt{D_1^2+\ldots + D_d^2}} \,.
$$
Given a measurable scalar function $f$ on $\mathbb{R}^d$ we denote by $M_f$ the linear operator in $L_2(\mathbb R^d)$ of pointwise multiplication by $f$. We are interested in the operator
$$
\dbar f := i \, [\mathrm{sgn}\,\mathcal D,1\otimes M_f]
$$
acting in $\mathbb C^N\otimes L_2(\mathbb R^d)$.

While for $f\in L_\infty(\mathbb R^d)$ the operator $\dbar f$ is clearly bounded in $\mathbb C^N\otimes L_2(\mathbb R^d)$, it also extends to a bounded operator for a certain class of unbounded functions $f$. Indeed, it is a result by Coifman, Rochberg and Weiss \cite{CoRoWe} that $\dbar f$ extends to a bounded operator in $\mathbb C^N\otimes L_2(\mathbb R^d)$ if and only if
$$
f\in BMO(\mathbb R^d) \,.
$$
The latter condition means, by definition, that $f\in L_{1,{\rm loc}}(\mathbb R^d)$ and
$$
\sup_{a\in\mathbb R^d, \, r>0} |B_r(a)|^{-1} \int_{B_r(a)} \left| f(x) - |B_r(a)|^{-1} \int_{B_r(a)} f(y)\,dy \right|dx <\infty \,.
$$
Here $B_r(a):= \{ x\in\mathbb R^d:\ |x-a|<r\}$. For a textbook presentation of this result we refer, for instance, to \cite[Section 3.5]{Grafakos}. Moreover, as shown in \cite{Uc}, $\dbar f$ is compact in $\mathbb C^N\otimes L_2(\mathbb R^d)$ if and only $f\in VMO(\mathbb R^d)$\footnote{In the literature, different spaces are denoted by VMO. We use the definition in \cite[Subsection 6.8]{Stein-book}. In \cite{Uc}, $VMO(\mathbb{R}^d)$ is denoted by $CMO(\mathbb{R}^d).$ In particular, our definition differs from the original one in \cite{Sa}.}, which is, by definition, the closure in $BMO(\mathbb{R}^d)$ of continuous, compactly supported functions.

In this paper, we are interested in quantitative compactness properties of $\dbar f$, measured in terms of the decay of its singular values. We will introduce our notation for singular values and the Schatten spaces in Subsection \ref{sec:compact} below. We recall that Janson and Wolff \cite{JaWo} have characterized membership of $\dbar f$ to the Schatten space $\mathcal L_p$ for $d<p<\infty$ in terms of membership of $f$ to a certain homogeneous fractional Sobolev space (whose definition we do not recall here, since it will play no role in what follows). Moreover, at the endpoint case $p=d$ they showed that $\dbar f\in \mathcal L_d$ if and only if $f$ is constant. The latter result was improved by Rochberg and Semmes \cite{RoSe2} on the Lorentz scale and then further in \cite{Fr}.

Of particular interest is the endpoint case of membership of $\dbar f$ to the weak trace ideal $\mathcal L_{d,\infty}$. To set the stage, we state the following theorem whose history we discuss momentarily. We denote by $\dot W_d^{1}(\mathbb R^d)$ the space of all $f\in L_{1,{\rm loc}}(\mathbb R^d)$ whose distributional gradient belongs to $L_d(\mathbb R^d,\mathbb{C}^d)$. Note also that, by the Poincar\'e inequality, $\dot W^{1}_d(\mathbb R^d)\subset BMO(\mathbb R^d)$. We refer the reader to the books \cite{Adams-book,Leoni-book,Taylor-book} for further information on Sobolev spaces.

\begin{thm}\label{mainiff}
Let $d\geq 2$ and $f\in BMO(\mathbb R^d)$. Then $\dbar f \in\mathcal L_{d,\infty}$ if and only if $f\in\dot W^1_d(\mathbb R^d)$. Moreover, with two constants $0<c_d\leq C_d<\infty$ depending only on $d$,
$$c_d \| \nabla f \|_{L_d(\mathbb{R}^d,\mathbb{C}^d)} \leq \| \dbar f\|_{\mathcal L_{d,\infty}} \leq C_d \|\nabla f\|_{L_d(\mathbb{R}^d,\mathbb{C}^d)}.$$
\end{thm}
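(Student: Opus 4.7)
I would split the theorem into its two inequalities and attack them separately, using the presentation
$\dbar f = i\sum_{j=1}^d \gamma_j \otimes [R_j, M_f]$, where $R_j = D_j/\sqrt{D_1^2+\cdots+D_d^2}$ is the $j$-th Riesz transform. Both directions reduce to analysing the $\mathcal{L}_{d,\infty}$-behaviour of the scalar commutators $[R_j,M_f]$.

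\textbf{Upper bound.} The pseudodifferential calculus suggests the informal identity
\[
[R_j, M_f] = \sum_{k=1}^d (\partial_{\xi_k} r_j)(D)\, M_{\partial_k f} + \text{lower order},
\qquad r_j(\xi)=\xi_j/|\xi|,
\]
so that $\dbar f$ is morally a finite sum of operators of the form $m(D) M_g$, with $m$ homogeneous of degree $-1$ (hence $m \in L_{d,\infty}(\mathbb{R}^d)$) and $g = \partial_k f \in L_d(\mathbb{R}^d)$. Plugging this into a Cwikel-type estimate
\[
\|m(D) M_g\|_{\mathcal{L}_{d,\infty}} \leq C_d \|m\|_{L_{d,\infty}(\mathbb{R}^d)} \|g\|_{L_d(\mathbb{R}^d)}
\]
yields the inequality $\|\dbar f\|_{\mathcal{L}_{d,\infty}} \leq C_d\|\nabla f\|_{L_d}$. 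To make this rigorous I would first prove it for Schwartz $f$, where the symbolic expansion above is controlled by an error in a smaller operator ideal, and then pass to the limit along a mollification argument, using that $f \mapsto \dbar f$ descends to the quotient by constants and is continuous in $\|\nabla f\|_{L_d}$.

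\textbf{Lower bound.} This direction is the harder one; I would invoke the principal symbol mapping framework flagged in the abstract. The plan is to build a $C^*$-algebra $\mathcal{E}$ of bounded operators on $L_2(\mathbb{R}^d)$ that contains the commutators $[R_j, M_f]$ (at least for $f \in \dot W^1_d$) and admits a $*$-homomorphism
\[
\mathrm{sym}:\mathcal{E}/(\mathcal{E}\cap\mathcal{K}) \to C_b(\mathbb{R}^d \times S^{d-1})
\]
into continuous symbols, extending the classical principal symbol on smooth operators. For smooth $f$ one computes
\[
\mathrm{sym}(\dbar f)(x,\xi) = \sum_{j,k} \gamma_j\, \partial_{\xi_k}(\xi_j/|\xi|)\,\partial_k f(x),
\]
and after integrating the pointwise matrix $d$-th power over $\xi \in S^{d-1}$ one obtains $\int_{S^{d-1}} \mathrm{tr}\,|\mathrm{sym}(\dbar f)(x,\xi)|^d\,d\xi \asymp |\nabla f(x)|^d$, so that the $L_d$-norm of the symbol is comparable to $\|\nabla f\|_{L_d}$. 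Combined with a Connes-type inequality bounding $\|\mathrm{sym}(T)\|_{L_d(\mathbb{R}^d\times S^{d-1})}$ from above by a constant times $\|T\|_{\mathcal{L}_{d,\infty}}$, this delivers the desired lower bound $c_d\|\nabla f\|_{L_d} \leq \|\dbar f\|_{\mathcal{L}_{d,\infty}}$.

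\textbf{Main obstacle.} The genuinely delicate step is setting up the principal symbol on the right operator algebra and verifying that $\dbar f$ lies in its domain when one only assumes $f \in \dot W^1_d$ (rather than smooth and compactly supported). Equivalently, one must control $\dbar f$ across \emph{all} scales and locations, not just in a compact window: the symbol must encode the full $\nabla f$ globally. I expect to import this framework from the earlier principal-symbol work of the second and third authors and their collaborators, and the technical heart of the proof will be to extend that framework to the minimal regularity class $\dot W^1_d$ by combining Cwikel bounds with a density argument, so that both the symbol map and the comparison estimate survive the passage to the limit.
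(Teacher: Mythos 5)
Your upper-bound argument is essentially the right one and is close to what the paper does: it cites \cite{LMSZ} for $f\in C^\infty_c$ (where exactly the kind of symbolic expansion and Cwikel bound you invoke is carried out) and then mollifies and uses continuity of $\dot W^1_d\subset BMO$ plus Fatou. That part of your proposal would go through.

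For the lower bound, the spirit is right — the paper does ultimately run a principal-symbol argument and the symbol computation you write down, yielding $|\mathrm{sym}(\dbar f)(x,\cdot)|\asymp|\nabla f(x)|$ after integrating over the sphere, matches Lemma \ref{lemma using main thm}. But there are two genuine gaps. First, you take the ``Connes-type inequality'' $\|\mathrm{sym}(T)\|_{L_d}\lesssim\|T\|_{d,\infty}$ for granted. No such off-the-shelf result is available here: the paper obtains the comparability only as a consequence of the full Weyl asymptotics in Theorem \ref{main thm}, whose proof occupies Sections 4–7. (What is available from earlier work such as \cite{LMSZ} is a Dixmier-trace version, proved there under the extra hypothesis $f\in L_\infty$, and lifting it to a genuine $\mathcal L_{d,\infty}$ quasi-norm comparison for unbounded $f$ is precisely the content you cannot simply ``import''.) Second, and more fundamentally, your plan only treats the case $f\in\dot W^1_d$, so it does not actually prove the ``only if'' direction: given merely $f\in BMO$ with $\dbar f\in\mathcal L_{d,\infty}$, one must first deduce that $\nabla f$ exists and lies in $L_d$. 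A density argument of the form ``approximate $f$ by $f_n$ and pass to the limit'' does not run, because you have no control of $\dbar f_n - \dbar f$ in any useful ideal norm. The missing ingredient is the majorization bound $\|\dbar(\Phi_t\ast f)\|_{d,\infty}\leq C_d\|\dbar f\|_{d,\infty}$ (\cite[Lemma 18]{LMSZ}), which the paper combines with a spatial cut-off $\chi$ to produce smooth, compactly supported test functions to which the symbol result applies, and then takes a supremum over $\chi$ and a limit $t\to 0$ to recover $\nabla f\in L_d$. Without that monotonicity under mollification, your argument cannot escape the smooth compactly supported regime.

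There is also a smaller structural mismatch worth flagging: the paper's symbol map $\mathrm{sym}$ lives on the $C^\ast$-algebra $\Pi$ of order-$0$ operators and is related to singular-value asymptotics of $T(1-\Delta)^{-1/2}$, whereas you propose to put $\mathrm{sym}$ directly on an algebra containing the order-$(-1)$ commutators $[R_j,M_f]$. This can be made to work, but you would then need an independent justification that the resulting $L_d$-valued symbol controls $\|\cdot\|_{d,\infty}$ from below — which is again the content of Theorem \ref{main thm}, not a preexisting fact.
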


To lighten the notations, in what follows we frequently write $\|\cdot \|_d$ instead of $\|\cdot\|_{L_d(\mathbb{ R}^d)}$ or $\|\cdot\|_{L_d(\mathbb{R}^d,\mathbb{C}^d)}.$ The particular norm is always clear from the context. A similar convention is applied to the operator norms; for instance, we write $\|\cdot\|_{d,\infty}$ instead of $\|\cdot\|_{\mathcal{L}_{d,\infty}}.$

Let us discuss precursors of Theorem \ref{mainiff}. Its statement appears explicitly in the appendix of the paper \cite{CoSuTe} by Connes, Sullivan and Teleman, written jointly with Semmes. The argument there relies on a deep analysis by Rochberg and Semmes \cite{RoSe2} of singular values of a certain class of operators. The novel ingredient in \cite{CoSuTe} is a certain derivative-free characterization of the Sobolev space $\dot W^1_d(\mathbb R^d)$, whose proof they sketched; see also \cite{RoSe} for an earlier discussion of these derivative-free conditions. In the recent paper \cite{Fr}, one of us gave an alternative proof of the characterization in the appendix of \cite{CoSuTe}. Meanwhile, in \cite{LMSZ} two of us proved Theorem \ref{mainiff} under the additional assumption $f\in L_\infty(\mathbb R^d)$. The proof there is independent of the work of Rochberg--Semmes and Connes--Sullivan--Teleman--Semmes and uses, instead, some tools from operator theory and pseudodifferential operator theory. Our proof of the implication $f\in\dot W^1_d(\mathbb R^d)\implies \dbar f\in\mathcal L_{d,\infty}$ in Theorem \ref{mainiff} uses the result of \cite{LMSZ} as an input. Our proof of the converse implication $\dbar f\in\mathcal L_{d,\infty} \implies f\in\dot W^{1}_d(\mathbb R^d)$ is to a large extent independent of \cite{LMSZ} and relies on the proof of spectral asymptotics, which we discuss next.

Indeed, our main result concerning $\dbar f$ is the following theorem about its spectral asymptotics.

\begin{thm}\label{main cor} Let $d\geq 2.$ If $f\in\dot{W}^{1}_d(\mathbb{R}^d)$ is real-valued, then
$$
\lim_{t\to\infty}t^{\frac1d}\mu(t,\dbar f)= \kappa_d \|\nabla f\|_{L_d(\mathbb R^d)} \,,
$$
where
$$
\kappa_d = (2\pi)^{-1} \left( N d^{-1} \int_{\mathbb S^{d-1}} (1-s_d^2)^{\frac d2} d\mathbf s \right)^\frac1d.
$$	
\end{thm}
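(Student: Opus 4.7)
The plan is to expand $\dbar f$ in terms of Riesz transforms and apply a Weyl-type spectral asymptotic theorem for the resulting class of operators (which the paper also establishes as an auxiliary main result). Since $\mathrm{sgn}\,\mathcal{D} = \sum_{j=1}^d \gamma_j \otimes R_j$ with $R_j = D_j/|D|$, we have $\dbar f = i\sum_j \gamma_j \otimes [R_j, M_f]$, which reduces matters to a matrix-valued combination of the commutators $[R_j,M_f]$. For smooth compactly supported $f$, standard pseudodifferential calculus (the Poisson bracket of $\xi_j/|\xi|$ with $f$, evaluated on the cosphere) produces the matrix-valued principal symbol
$$\sigma(\dbar f)(x,s) = \sum_{j,k=1}^d \gamma_j(\delta_{jk}-s_js_k)\partial_k f(x), \qquad (x,s)\in\mathbb{R}^d\times\mathbb{S}^{d-1},$$
and the Clifford relations $\gamma_j\gamma_k+\gamma_k\gamma_j=2\delta_{jk}$ collapse the quadratic form to the scalar multiple of the identity
$$\sigma(\dbar f)^*\,\sigma(\dbar f) = \bigl(|\nabla f(x)|^2 - (s\cdot\nabla f(x))^2\bigr)\, I_N.$$

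The second step is to feed this into the Weyl-type formula for the class in question, which has the shape
$$\lim_{t\to\infty} t^{1/d}\mu(t,T) = (2\pi)^{-1}\Bigl(\tfrac{1}{d}\int_{\mathbb{R}^d\times\mathbb{S}^{d-1}} \mathrm{tr}\bigl((\sigma(T)^*\sigma(T))^{d/2}\bigr)\, dx\, d\mathbf{s}\Bigr)^{1/d}.$$
Substituting $\sigma(\dbar f)$, using $\mathrm{tr}(I_N)=N$ and rotational invariance to reduce the $\mathbb{S}^{d-1}$-integral to $|\nabla f(x)|^d\int_{\mathbb{S}^{d-1}}(1-s_d^2)^{d/2}d\mathbf{s}$, and integrating the remainder over $x$ produces exactly $\kappa_d\|\nabla f\|_d$, settling the theorem for smooth compactly supported $f$.

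To extend to arbitrary $f\in\dot W^1_d(\mathbb{R}^d)$, I would approximate $f$ by smooth compactly supported $f_n$ in the $\dot W^1_d$ norm and use the upper bound $\|\dbar f\|_{d,\infty}\leq C_d\|\nabla f\|_d$ from Theorem \ref{mainiff}. The Weyl inequality $\mu(2t,S+T)\leq\mu(t,S)+\mu(t,T)$, combined with the definition of the $\|\cdot\|_{d,\infty}$ quasi-norm, sandwiches both the limit superior and limit inferior of $t^{1/d}\mu(t,\dbar f)$ between $\kappa_d\|\nabla f_n\|_d$ and a correction bounded by a constant multiple of $\|\nabla(f-f_n)\|_d$; sending $n\to\infty$ then forces the full limit to exist and equal $\kappa_d\|\nabla f\|_d$.

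The main obstacle is establishing the Weyl-type formula for this operator class when $f$ only has gradient in $L_d$: classical microlocal Weyl laws assume $C^\infty$ symbols, whereas here one must work within the $C^*$-algebraic principal symbol framework developed by the last two authors and their collaborators, which can absorb rough multipliers and is specifically designed to be compatible with singular-value asymptotics in $\mathcal{L}_{d,\infty}$. A secondary subtlety is ensuring that the normalization constants---in particular the factor $Nd^{-1}\int_{\mathbb{S}^{d-1}}(1-s_d^2)^{d/2}d\mathbf{s}$---match exactly between the Clifford trace computation, the Weyl constant convention, and the spin-representation dimension $N=2^{[d/2]}$.
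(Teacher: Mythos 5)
Your outline matches the paper's strategy closely: both reduce $\dbar f$ to a pseudodifferential model via $\mathrm{sgn}\,\mathcal D=\sum_j\gamma_j\otimes R_j$, identify the (order $-1$) principal symbol $\sum_{j,k}\gamma_j(\delta_{jk}-s_js_k)\partial_k f(x)\cdot|\xi|^{-1}$, use the Clifford relations to collapse $\sigma^*\sigma$ to the scalar $(|\nabla f|^2-(s\cdot\nabla f)^2)I_N$, apply a Weyl formula in the $C^*$-algebraic symbol framework (Theorem \ref{main thm}), and then pass from $C^\infty_c$ to $\dot W^1_d$ by approximation (the paper invokes Lemma \ref{bs limit lemma}; you describe the underlying sandwiching directly). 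The symbol computation and the resulting constant $\kappa_d$ are correct.

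There are, however, two substantive steps that are glossed over and are exactly where the paper does real work. First, the reduction $\dbar f - A(1+\mathcal D^2)^{-1/2}\in(\mathcal L_{d,\infty})_0$ with $A=\sum_k\gamma_k\otimes A_k$, $A_k=M_{\partial_k f}-\sum_j\frac{D_kD_j}{-\Delta}M_{\partial_j f}$, is not ``standard pseudodifferential calculus'' (the symbols are singular at $\xi=0$ and one needs membership in a specific small ideal, not just compactness); the paper cites \cite[Theorem 6.3.1]{LMSZ-book} for this. Second, and more important, your ``Clifford relations collapse the quadratic form'' is a purely symbol-level identity, while what one actually needs is the operator-level statement $|\dbar f|^2-1\otimes(1-\Delta)^{-1/2}\bigl(\sum_k|A_k|^2\bigr)(1-\Delta)^{-1/2}\in(\mathcal L_{d/2,\infty})_0$. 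Expanding $|A(1+\mathcal D^2)^{-1/2}|^2$ produces cross terms $\gamma_k\gamma_l\otimes(1-\Delta)^{-1/2}(A_k^*A_l-A_l^*A_k)(1-\Delta)^{-1/2}$, and one must show these (together with the contributions from $\Im A_k$, since the $A_k$ are not self-adjoint) are negligible in $\mathcal L_{d/2,\infty}$ --- mere compactness, which is what the vanishing of the symbol gives for free, is not enough. That is the content of Lemma \ref{ak properties lemma}, which in turn rests on the commutator estimates of Theorem \ref{commutator theorem}. Equivalently, if you prefer to invoke a ``matrix-valued Weyl formula'' with $\mathrm{tr}((\sigma^*\sigma)^{d/2})$, note that Theorem \ref{main thm} is stated for scalar operators in $\Pi$, so such a matrix version would itself require a reduction argument of precisely this kind. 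These gaps are fillable, but they are the technically nontrivial core of the argument, not side conditions.
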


This result substantially strengthens the main result in \cite{LMSZ} concerning singular traces of the operator $|\dbar f|^d.$ For a detailed study of singular traces, we refer the reader to the books \cite{LSZ-book,LSZ-book-2}. Their applications in quantised calculus are given in \cite{LMSZ-book}.


The following corollary is an almost immediate consequence of Theorems \ref{mainiff} and~\ref{main cor}.

\begin{cor}\label{corconst} Let $d\geq 2$. If $f\in BMO(\mathbb{R}^d)$ satisfies $\lim_{t\to\infty} t^\frac1d\mu(t,\dbar f)=0$, then $f$ is constant.
\end{cor}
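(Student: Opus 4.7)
The plan is to chain Theorems \ref{mainiff} and \ref{main cor} together, with a small detour to handle the possibly complex-valued $f$.

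First, I would observe that the hypothesis $\lim_{t\to\infty}t^{1/d}\mu(t,\dbar f)=0$ implies that $t\mapsto t^{1/d}\mu(t,\dbar f)$ is bounded for large $t$, and since $f\in BMO(\mathbb{R}^d)$ the operator $\dbar f$ is bounded (by Coifman--Rochberg--Weiss) and hence $\mu(\cdot,\dbar f)$ is bounded near $t=0$. Therefore $\dbar f\in\mathcal{L}_{d,\infty}$. Applying Theorem \ref{mainiff} gives $f\in\dot W^1_d(\mathbb{R}^d)$.

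Now if $f$ were real-valued, I would conclude immediately by Theorem \ref{main cor}: the hypothesis forces $\kappa_d\|\nabla f\|_{L_d(\mathbb{R}^d)}=0$, and since $\kappa_d>0$ we obtain $\nabla f=0$, so $f$ is constant. To reduce the general case to the real one, write $f=g+ih$ with $g,h$ real-valued; both lie in $\dot W^1_d(\mathbb{R}^d)$ since $f$ does. A direct computation using the definition $\dbar f=i[\mathrm{sgn}\,\mathcal{D},1\otimes M_f]$ shows $(\dbar f)^{\ast}=\dbar\bar f$, so $\mu(t,\dbar\bar f)=\mu(t,\dbar f)$ for all $t$. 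From $\dbar g=\tfrac12(\dbar f+\dbar\bar f)$ and the subadditivity inequality $\mu(2t,A+B)\leq\mu(t,A)+\mu(t,B)$ one gets
$$
(2t)^{1/d}\mu(2t,\dbar g)\leq 2^{1/d}\,t^{1/d}\mu(t,\dbar f)\xrightarrow[t\to\infty]{}0,
$$
and the same estimate for $\dbar h$. Applying Theorem \ref{main cor} to the real-valued functions $g,h\in\dot W^1_d(\mathbb{R}^d)$ yields $\|\nabla g\|_d=\|\nabla h\|_d=0$, hence $g$ and $h$ are constants and so is $f$.

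There is no genuine obstacle here: the argument is essentially a two-line deduction once Theorems \ref{mainiff} and \ref{main cor} are in hand. The only mildly non-trivial point is verifying that the Weyl-type asymptotic for a complex-valued $f$ still forces the gradient to vanish; this is taken care of by the adjoint identity $(\dbar f)^{\ast}=\dbar\bar f$ and singular-value subadditivity as above.
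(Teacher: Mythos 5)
Your proof is correct and follows essentially the same route as the paper: deduce $\dbar f\in\mathcal L_{d,\infty}$, apply Theorem \ref{mainiff} to get $f\in\dot W^1_d$, and then invoke Theorem \ref{main cor} after reducing to real-valued $f$. Your reduction via $(\dbar f)^\ast=\dbar\bar f$ and singular-value subadditivity is precisely the content of the paper's Lemma \ref{realpart}, which the paper cites at that step, so the two arguments coincide in substance.
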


This corollary strengthens the Janson--Wolff and Rochberg--Semmes results mentioned above. Whether the same conclusion also holds under the weaker assumption $\liminf_{t\to\infty} t^\frac1d\mu(t,\dbar f)=0$ is not known; see \cite{Fr} for a result in this direction.

Theorems \ref{mainiff} and \ref{main cor} give a complete answer to the question about spectral asymptotics for $\dbar f$. They show that, in order to have $\dbar f\in\mathcal L_{d,\infty}$, it is necessary that $f\in\dot W^1_d(\mathbb R^d)$ and, for real-valued $f$, as soon as this assumption is satisfied, one has power-like spectral asymptotics. In particular, we see that the asymptotic coefficient (namely, $\|\nabla f\|_{L_d(\mathbb R^d)}$) provides a non-asymptotic, uniform bound on the spectrum.

The problem of deriving spectral asymptotics under minimal regularity assumptions was emphasized by Birman and Solomyak, who also pointed out the important role played by non-asymptotic, uniform bounds. Their work and that of their collaborators is summarized, for instance in \cite{BiSo1,BiSoQuant}.

One motivation for the work of Birman and Solomyak came from the study of eigenvalues of Schr\"odinger-type operators. The spectral asymptotics in this case are typically referred to as Weyl-type asymptotics. Non-asymptotic, uniform bounds in this area are, for instance, Lieb--Thirring and Cwikel--Lieb--Rozenblum inequalities. Important results in this context were obtained, for instance, in \cite{Ro0,Ro1,Ro2,Ro3}; see also the textbook \cite[Chapter 4]{FrLaWe} and the references therein, as well as the recent paper \cite{Fr0}. Our results in Theorems \ref{mainiff} and \ref{main cor} can be viewed as the analogues for $\dbar f$ of these results for Schr\"odinger operators. The asymptotics in Theorem \ref{main cor} are also of semi-classical nature, but they are somewhat more subtle because of cancelations in the commutator defining $\dbar f$.

To the best of our knowledge, Theorems \ref{mainiff} and \ref{main cor} do not follow directly from the work of Birman and Solomyak and their school. Probably some parts of the proof of Theorem \ref{main cor} could be abbreviated by referring to \cite{BiSo0}. Given the limited availability of the latter paper, we decided to give a complete, rather self-contained proof of Theorem \ref{main cor}. Morever, we widen our perspective and, instead of just studying $\dbar f$, we consider a large class of pseudodifferential operators. The connection between this class of pseudodifferential operators and our original object of interest, $\dbar f$, is probably not immediately obvious and will be clarified later in the proof of Proposition \ref{lowerbound}.

There is another line of works to which our paper is connected. It concerns commutators with singular integral operators and their higher order analogues. From this huge literature, we only cite \cite{CoRoWe,Uc,Ja,JaWo,JaPe,ArFiPe,RoSe2} and refer to the references therein. The one-dimensional case is somewhat different and has been treated in \cite{Pe,CoRo,Ro}. These classical works concern almost exclusively non-asymptotic, uniform bounds and do not consider asymptotics. Exceptions are, for instance, \cite{EnRo,LMSZ} where asymptotics are proved in the weaker sense of the existence of a singular trace. We believe that the techniques that we develop in this paper might be useful in some of the above mentioned situations and provide asymptotics there as well.

\subsection{Spectral asymptotics for classical pseudodifferential operators}

The following $C^{\ast}$-algebra $\Pi$ is the closure (in the uniform norm) of the $\ast$-algebra of all compactly supported {\it classical} pseudodifferential operators of order $0.$ However, we use an elementary definition of $\Pi$, which does not involve pseudodifferential operators. The idea to consider this closure may be discerned in \cite[Proposition 5.2]{Atiyah-Singer}. For recent developments of this idea we refer to \cite{DAO1,DAO2}.

\begin{defi}\label{pis def} Let $\pi_1:L_{\infty}(\mathbb{R}^d)\to B(L_2(\mathbb{R}^d)),$ $\pi_2:L_{\infty}(\mathbb{S}^{d-1})\to B(L_2(\mathbb{R}^d))$ be defined by setting, for all $f\in L_\infty(\mathbb R^d)$ and $g\in L_\infty(\mathbb S^{d-1})$,
$$
\pi_1(f)=M_f
\qquad\text{and}\qquad
\pi_2(g)=g(\frac{-i\nabla}{\sqrt{-\Delta}}) \,.
$$
In other words, $\pi_2(g)$ acts on $L_2(\mathbb{R}^d)$ as a (homogeneous) Fourier multiplier
$$(\mathcal{F}(\pi_2(g)\xi))(s)=g(\frac{s}{|s|})(\mathcal{F}\xi)(s),\quad \xi\in L_2(\mathbb{R}^d),\quad s\in\mathbb{R}^d.$$
Let
$$
\mathcal{A}_1=\mathbb{C}+C_0(\mathbb{R}^d)
\qquad\text{and}\qquad
\mathcal{A}_2=C(\mathbb{S}^{d-1}) \,,
$$
and let $\Pi$ be the $C^{\ast}$-subalgebra in $B(L_2(\mathbb{R}^d))$ generated by the algebras $\pi_1(\mathcal{A}_1)$ and $\pi_2(\mathcal{A}_2).$
\end{defi}

In the definition of $\mathcal A_1$, by $C_0(\mathbb R^d)$ we denote the set of continuous functions tending to zero at infinity.

It should be pointed out that the $\ast$-representations $\pi_1$ and $\pi_2$ are continuous in the strong operator topology. This fact will play a substantial role in our study.

We say that $T\in B(L_2(\mathbb{R}^d))$ is \emph{compactly supported from the right} if there is a $\phi\in C^{\infty}_c(\mathbb{R}^d)$ such that $T=T\pi_1(\phi)$.

According to \cite{DAO1} (where a much stronger result is given in Theorem 1.2) or \cite{DAO2} (where a very general result is given in Theorem 3.3 and examplified on p.\ 284), there is a $\ast$-homomorphism
$${\rm sym}:\Pi\to\mathcal{A}_1\otimes_{{\rm min}}\mathcal{A}_2=C(\mathbb{S}^{d-1},\mathbb{C}+C_0(\mathbb{R}^d))$$
such that, for all $f\in \mathbb C + C_0(\mathbb R^d)$ and $g\in C(\mathbb S^{d-1})$,
$${\rm sym}(\pi_1(f))=f\otimes 1
\qquad\text{and}\qquad
{\rm sym}(\pi_2(g))=1\otimes g.$$
This $\ast$-homomorphism is called a \emph{principal symbol mapping}. It properly extends the notion of the principal symbol of the classical pseudodifferential operator.

If $T\in\Pi$ is compactly supported from the right, then ${\rm sym}(T)\in C_c(\mathbb{R}^d\times\mathbb{S}^{d-1}).$ Indeed, if $T=T\pi_1(\phi)$ with $\phi\in C^{\infty}_c(\mathbb{R}^d),$ then
$${\rm sym}(T)={\rm sym}(T)\cdot {\rm sym}(\pi_1(\phi))={\rm sym}(T)\cdot (\phi\otimes 1).$$
Thus, ${\rm sym}(T)$ is supported on ${\rm supp}(\phi)\times\mathbb{S}^{d-1}.$

The following is our main result concerning spectral asymptotics for pseudodifferential operators.

\begin{thm}\label{main thm} Let $d\geq 2.$ If $T\in\Pi$ is compactly supported from the right, then
$$\lim_{t\to\infty}t^{\frac1d}\mu(t,T(1-\Delta)^{-\frac12})= d^{-\frac1d} (2\pi)^{-1} \|{\rm sym}(T)\|_{L_d(\mathbb{R}^d\times\mathbb{S}^{d-1})}.$$
\end{thm}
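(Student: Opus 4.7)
I would begin by establishing the formula on a dense $*$-subalgebra $\Pi_0 \subset \Pi$ consisting of finite sums of products $\prod_i \pi_1(f_i)\pi_2(g_i)$ with $f_i \in C^\infty_c(\mathbb R^d)$ and $g_i \in C^\infty(\mathbb S^{d-1})$. For any $T \in \Pi_0$, the operator $T(1-\Delta)^{-1/2}$ is a classical, compactly supported pseudodifferential operator of order $-1$ whose principal symbol is $\sigma(x,\xi) = \mathrm{sym}(T)(x,\xi/|\xi|)\,|\xi|^{-1}$ at infinity. The classical Weyl asymptotic for such operators (H\"ormander, Birman--Solomyak) yields the counting function asymptotic $N(\lambda, T(1-\Delta)^{-1/2}) \sim (2\pi)^{-d}\,|\{(x,\xi):|\sigma(x,\xi)|>\lambda\}|$ as $\lambda \to 0^+$. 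Passing to polar coordinates in $\xi$ evaluates the volume explicitly as
\[
|\{(x,\xi):|\sigma(x,\xi)|>\lambda\}| = \frac{1}{d\lambda^d}\,\|\mathrm{sym}(T)\|_{L_d(\mathbb R^d\times\mathbb S^{d-1})}^d,
\]
and inverting gives the claimed asymptotic with exactly the constant $(2\pi)^{-1}d^{-1/d}$.

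\textbf{Passage from the core to general $T\in\Pi$.} The central analytic fact is that the functional $\Lambda(A) := \limsup_{t\to\infty}t^{1/d}\mu(t,A)$ is subadditive (via the Ky Fan inequality $\mu(s+t,A+B)\leq \mu(s,A)+\mu(t,B)$) and satisfies $\Lambda(A)\leq \|A\|_{d,\infty}$. Fix $\phi\in C^\infty_c(\mathbb R^d)$ with $T=T\pi_1(\phi)$, and suppose $T_n\in\Pi_0$ satisfies $T_n = T_n\pi_1(\phi)$ and $T_n \to T$ in operator norm. Given a uniform Cwikel-type bound of the shape
\[
\|S(1-\Delta)^{-1/2}\|_{d,\infty} \leq C_\phi\,\|S\|_\infty \quad\text{for all}\ S = S\pi_1(\phi) \in \Pi,
\]
we obtain $\Lambda((T-T_n)(1-\Delta)^{-1/2}) \leq C_\phi\|T-T_n\|_\infty\to 0$, so the asymptotic for $T_n$ carries over to $T$. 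Simultaneously, continuity of $\mathrm{sym}$ in operator norm, combined with the fixed compact support $\mathrm{supp}(\phi)\times\mathbb S^{d-1}$, yields $\|\mathrm{sym}(T_n)\|_{L_d}\to\|\mathrm{sym}(T)\|_{L_d}$. Both sides of the desired identity then pass to the limit.

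\textbf{Density and main obstacle.} The density of $\Pi_0\,\pi_1(\phi)$ in $\Pi\,\pi_1(\phi)$ (operator norm) should follow from a Stone--Weierstrass argument at the level of symbols, using that $\mathrm{sym}$ is a $*$-homomorphism onto $C(\mathbb S^{d-1},\mathbb C + C_0(\mathbb R^d))$ with kernel in the compact operators, together with the fact that the standard limit $\lim_{t\to\infty}t^{1/d}\mu(t,K) = 0$ whenever $K$ is compact (of appropriate order). The genuine difficulty is the uniform Cwikel-type estimate for all $S = S\pi_1(\phi)\in\Pi$: the classical Cwikel bound handles only $S = M_f\cdot m(D)$, whereas a general $S$ is an arbitrary $C^*$-algebraic polynomial in position and direction multipliers. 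I would attack this by a partition of unity on $\mathbb S^{d-1}$, localising the direction variable so that $S$ becomes (to lower order) a single product $M_f\cdot m(D)$, then commuting $\pi_1$'s past $\pi_2$'s and absorbing the commutators $[\pi_1(f),\pi_2(g)](1-\Delta)^{-1/2}$ into a Schatten ideal strictly smaller than $\mathcal L_{d,\infty}$. This commutator-ideal reduction, rather than the Weyl law itself, is the main technical obstacle I anticipate.
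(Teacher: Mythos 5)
Your proposal shares the same overall architecture as the paper: prove the asymptotics on a dense ``core'' class, then extend to general $T\in\Pi$ by approximation in the quasi-norm $\|\cdot\|_{d,\infty}$. The subadditive-functional argument you sketch for the extension step is exactly the paper's Lemma \ref{bs limit lemma}, and your appeal to $\ast$-algebra density in $\Pi$ together with the collapse of products $\prod_i\pi_1(f_i)\pi_2(g_i)$ to single products modulo compacts matches Lemmas \ref{commutator is compact} and \ref{ber request lemma1}.

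However, you have inverted the difficulty. The estimate you call ``the genuine difficulty,''
$$\|S(1-\Delta)^{-\frac12}\|_{d,\infty}\leq C_\phi\,\|S\|_\infty\quad\text{for } S=S\pi_1(\phi)\in\Pi,$$
is essentially trivial: from $S=S\pi_1(\phi)$ one has $S(1-\Delta)^{-\frac12}=S\cdot\pi_1(\phi)(1-\Delta)^{-\frac12}$, and the ideal property of $\mathcal{L}_{d,\infty}$ gives $\|S(1-\Delta)^{-\frac12}\|_{d,\infty}\leq\|S\|_\infty\,\|\pi_1(\phi)(1-\Delta)^{-\frac12}\|_{d,\infty}$. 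The last factor is finite by a single application of Cwikel's estimate for $d>2$ or Solomyak's estimate (Theorem \ref{solomyak thm}) for $d=2$. No partition of unity on $\mathbb{S}^{d-1}$ or commutator-ideal reductions are needed; the paper uses this bound in one line at the end of the proof of Theorem \ref{main thm}.

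The genuine gap is the step you dispatch as ``the classical Weyl asymptotic for such operators (H\"ormander, Birman--Solomyak).'' This is precisely the special case (Theorem \ref{main thm special}) on which the paper spends Sections 4, 5 and 6: commutator estimates (Theorem \ref{commutator theorem}), abstract limit lemmas (Lemmas \ref{first abstract limit lemma}--\ref{third abstract limit lemma}), a reduction to a torus problem for $d>2$, and a delicate radial-decomposition argument for $d=2$. The introduction explicitly remarks that, to the authors' knowledge, these results ``do not follow directly from the work of Birman and Solomyak,'' that \cite{BiSo0} might abbreviate only ``some parts'' of the proof, and that the paper therefore gives a self-contained argument. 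Moreover, your reduction is not quite a textbook Weyl law: $\pi_2(g)$ is a Fourier multiplier singular at $\xi=0$, so $T(1-\Delta)^{-\frac12}$ is not literally a smooth classical pseudodifferential operator; the symbol $\mathrm{sym}(T)(x,\xi/|\xi|)|\xi|^{-1}$ may vanish on sets of positive measure; and the operator is not self-adjoint, so one needs singular-value rather than eigenvalue asymptotics. Each of these requires justification before one can invoke a reference, and that justification is the content of the theorem. Your polar-coordinate evaluation of the constant is correct once the Weyl law is in hand, but supplying that law in the required generality is where all the work lies.
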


Let us show that these asymptotics are of semiclassical nature. We consider the special case $T=\pi_1(f)\pi_2(g)$. On the one hand, we have $({\rm sym} (T))(\mathbf t,\mathbf s) = f(\mathbf t)g(\mathbf s)$. On the other hand, the function
$$P(\mathbf t,\mathbf p) = f(\mathbf t)g(\frac{\mathbf p}{|\mathbf p|})(1+|\mathbf p|^2)^{-\frac12},\quad (\mathbf t,\mathbf p)\in\mathbb R^d\times\mathbb R^d,$$
is the `semiclassical analogue' of the operator $T(1-\Delta)^{-1}$, and if we denote by $\mu(P)$ its decreasing rearrangement with respect to the measure $m\times (2\pi)^{-d} m$ on $\mathbb R^d\times\mathbb R^d$, then we easily find that
$$
\lim_{t\to\infty} t^\frac1d \mu(t, P) = d^{-\frac1d} (2\pi)^{-1} \| f\|_d \|g\|_d \,.
$$
Thus,
$$
\lim_{t\to\infty}t^{\frac1d}\mu(t,T(1-\Delta)^{-\frac12})= \lim_{t\to\infty} t^\frac1d \mu(t, P) \,,
$$
which, indeed, shows the semiclassical nature of Theorem \ref{main thm}.

The proof of Theorem \ref{main thm} is, in some sense, divided into three main steps. The first step concerns establishing a priori bounds, and the second one concerns the proof of asymptotics in a simple model situation. The third step is the proof of the full result by combining the first two steps. The a priori bounds in the first step are the topic of Section 4. Here we rely on \cite{LeSZ,MSX,LMSZ} for some intermediary results, but the main result, Theorem \ref{commutator theorem}, is new and is of independent interest. As in many problems of a semiclassical character, the spectral estimates of Cwikel, Kato--Seiler--Simon and Solomyak play an fundamental role in its proof. More specific to the problem at hand and of crucial importance both here and in \cite{MSX,LMSZ} are techniques from double operator integrals; see also \cite{CPSZ-JOT,CPSZ-AJM,CSZ-Fourier,DDSZ}. The above-mentioned second and third steps are the topic of Sections 5, 6 and 7. In their technical implementation we have found it convenient to not distinguish too strictly between the second and third step and to take these steps simultaneously. So, the a priori bounds will already come in when proving the asymptotics in a model problem on the torus with simpler symbols (see, for instance, Proposition \ref{main prop special torus}). The fact that in our situation a priori bounds play a more important role than in problems of a similar nature, considered, for instance, in the works of Birman and Solomyak mentioned above, comes ultimately from the fact that we are dealing with semiclassical asymptotics whose first term vanishes.

Having described the content of Sections 4-7, let us briefly describe that of the remaining sections. 
In Section 2, we set up our notation and recall the relevant definitions for spaces of functions and operators. 
In Section 3, we present, in an abstract setting, the arguments that we will use in the analysis of the model problem on the torus with simple symbols. Those might be of interest in various other problems as well. In the final Section 8 we deduce Theorems \ref{mainiff} and \ref{main cor} from Theorem \ref{main thm}. This will be relatively straightforward, given the earlier work in \cite{LMSZ}.


\subsection*{Acknowledgements}

Partial support through U.S.~National Science Foundation grant DMS-1954995 and through the German Research Foundation grant EXC-2111-390814868 is acknowledged.


\section{Preliminaries and notations}

\subsection{Function spaces}\label{function spaces subsection} The weak space $L_{p,\infty}(\mathbb{R}^d)$ is defined in the usual way:
$$L_{p,\infty}(\mathbb{R}^d)=\Big\{f:\ \sup_{t>0}t^{\frac1p}\mu(t,f)<\infty\Big\},$$
$$\|f\|_{p,\infty}=\sup_{t>0}t^{\frac1p}\mu(t,f).$$
Here, $\mu(\cdot,f)$ is the nonincreasing rearrangement of $|f|.$

We denote by $(L_{p,\infty})_0(\mathbb{R}^d)$ the closure of all compactly supported functions in $L_{p,\infty}(\mathbb{R}^d).$ In other words,
$$(L_{p,\infty})_0(\mathbb{R}^d)=\Big\{f\in L_{p,\infty}(\mathbb{R}^d):\ \lim_{t\to\infty}t^{\frac1p}\mu(t,f)=0\Big\}.$$

\subsection{Operator spaces}\label{sec:compact}

The following material is standard; for more details we refer the reader to \cite{Simon-book,BiSo, LSZ-book-2}. Let $H$ be a complex separable Hilbert space, and let $B(H)$ denote the set of all bounded operators on $H$, equipped with the uniform (operator) norm $\|\cdot\|_{\infty}$. Let $\mathcal{K}(H)$ denote the ideal of compact operators on $H.$ Given $T\in\mathcal{K}(H),$ the sequence of singular values
$\mu(T) = \{\mu(k,T)\}_{k=0}^\infty$ is defined as:
$$\mu(k,T) = \inf\{\|T-R\|_{\infty}:\quad\mathrm{rank}(R) \leq k\}.$$
Equivalently, $\mu(\cdot,T)$ is the sequence of eigenvalues of $|T|$ arranged in nonincreasing order with multiplicities.

It is convenient to define a singular value function $t\mapsto\mu(t,T),$ $t>0,$ by the same formula
$$\mu(t,T) = \inf\{\|T-R\|_{\infty}:\quad\mathrm{rank}(R) \leq t\}.$$
Note that
$$\mu(\cdot,T)=\sum_{k\geq0}\mu(k,T)\chi_{(k,k+1)}.$$
In what follows, we do not distinguish between the singular value sequence and the singular value function. We often abbreviate $\mu(\cdot,T)$ by $\mu(T)$.

If $(T_k)_{k\geq0}\subset B(H)$ is a bounded sequence, then $\bigoplus_{k\geq0}T_k$ is understood as an element in $B(H\oplus H\oplus\cdots).$ This is a convenient notation due to the following fact: if the sequence $(T_k)_{k\geq0}\subset B(H)$ consists of pairwise orthogonal operators (i.e. $T_kT_l=T_k^{\ast}T_l=0$ whenever $k\neq l$), then
$$\mu(\sum_{k\geq0}T_k)=\mu(\bigoplus_{k\geq0}T_k).$$

Let $p \in (0,\infty).$ The Schatten class $\mathcal{L}_p$ is the set of operators $T$ in $\mathcal{K}(\mathcal{H})$ such that $\mu(T)$ is $p$-summable, i.e.\ an element of the sequence space $\ell_p.$ If $p\geq 1,$ then the $\mathcal{L}_p$-norm is defined as:
$$\|T\|_p := \|\mu(T)\|_p = \left(\sum_{k=0}^\infty \mu(k,T)^p\right)^{\frac1p}.$$
With this norm $\mathcal{L}_p$ is a Banach space and an ideal of $B(H).$

The weak Schatten class $\mathcal{L}_{p,\infty}$ is the set of operators $T$ such that $\mu(T)$ is an element of the weak $L_p$-space $l_{p,\infty}$, with quasi-norm:
$$\|T\|_{p,\infty} = \sup_{t>0}t^{\frac1p}\mu(t,T) < \infty.$$
It follows from the inequality
$$\mu(t,T+S)\leq\mu(\frac{t}{2},T)+\mu(\frac{t}{2},S),\quad t>0,$$
that we have the following quasi-triangle inequality in $\mathcal{L}_{p,\infty}:$
$$\|T+S\|_{p,\infty}\leq 2^{\frac1p}(\|T\|_{p,\infty}+\|S\|_{p,\infty}).$$
The space $\mathcal{L}_{p,\infty}$ is an ideal of $B(H)$. We also have the following form of H\"older's inequality,
\begin{equation}\label{holder inequality}
\|TS\|_{r,\infty} \leq c_{p,q}\|T\|_{p,\infty}\|S\|_{q,\infty}
\end{equation}
where $\frac{1}{r}=\frac{1}{p}+\frac{1}{q}$, for some constant $c_{p,q}.$ For a detailed discussion of the H\"older inequality and the precise value of the optimal constant $c_{p,q}$ we refer to \cite{SZ-holder-pams}.

We also need the ideal
$$(\mathcal{L}_{p,\infty})_0=\left\{T\in\mathcal{L}_{p,\infty}:\ \lim_{t\to\infty}t^{\frac1p}\mu(t,T)=0 \right\}.$$
The ideal $(\mathcal{L}_{p,\infty})_0$ is the closure of the ideal of all finite rank operators in the norm $\|\cdot\|_{p,\infty}.$ 

Note that if $T\in\mathcal{L}_{p,\infty}$ and $A$ is compact, then $TA,AT\in(\mathcal{L}_{p,\infty})_0.$ Similarly, if $T\in\mathcal{L}_{p,\infty}$ and $S\in(\mathcal{L}_{q,\infty})_0,$ then $TS,ST\in(\mathcal{L}_{r,\infty})_0,$ $\frac1r=\frac1p+\frac1q.$

The following lemma is well known. We provide a proof for the sake of completeness.

\begin{lem}\label{realpart}
Let $0<p<\infty$ and $A\in\mathcal L_{p,\infty}$. Then $\Re A, \Im A\in\mathcal L_{p,\infty}$ and
$$\|\Re A \|_{p,\infty},\|\Im A\|_{p,\infty} \leq 2^{\frac1p}\|A\|_{p,\infty}.$$
Moreover,
$$\limsup_{t\to\infty} t^\frac 1p \mu(t,\Re A),\limsup_{t\to\infty} t^\frac1p \mu(t,\Im A) \leq 2^{\frac1p}\limsup_{t\to\infty} t^\frac 1p \mu(t,A).$$
In particular, if $A\in(\mathcal L_{p,\infty})_0$, then $\Re A, \Im A\in (\mathcal L_{p,\infty})_0.$
\end{lem}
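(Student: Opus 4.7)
The plan is to write $\Re A = \tfrac{1}{2}(A + A^{*})$ and $\Im A = \tfrac{1}{2i}(A - A^{*})$, and combine the quasi-triangle inequality for singular values with the identity $\mu(t,A^{*})=\mu(t,A)$ (which holds because $|A^{*}|$ and $|A|$ have the same non-zero spectrum). This reduces everything to controlling $\mu(t/2,A)$ by $\|A\|_{p,\infty}$ and taking limits.

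More concretely, I would first apply the inequality $\mu(t,T+S)\leq \mu(t/2,T)+\mu(t/2,S)$ to $T=A/2$ and $S=\pm A^{*}/2$ (or $\pm A^{*}/(2i)$) and use $\mu(t/2,A^{*})=\mu(t/2,A)$ to obtain the pointwise bound
$$
\mu(t,\Re A),\ \mu(t,\Im A)\ \leq\ \mu(t/2,A),\qquad t>0.
$$
Multiplying by $t^{1/p}$ and writing $t^{1/p}=2^{1/p}(t/2)^{1/p}$ immediately gives, after taking the supremum in $t$, the quasi-norm bound $\|\Re A\|_{p,\infty},\|\Im A\|_{p,\infty}\leq 2^{1/p}\|A\|_{p,\infty}$. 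The same manipulation with $\limsup_{t\to\infty}$ in place of $\sup_{t>0}$ yields the second displayed inequality, since the substitution $s=t/2$ does not affect the limit superior.

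The final assertion about $(\mathcal{L}_{p,\infty})_0$ is then immediate: if $\lim_{t\to\infty}t^{1/p}\mu(t,A)=0$, the $\limsup$ estimate forces $\lim_{t\to\infty}t^{1/p}\mu(t,\Re A)=\lim_{t\to\infty}t^{1/p}\mu(t,\Im A)=0$. There is no real obstacle here; the only point worth mentioning explicitly is the identity $\mu(t,A^{*})=\mu(t,A)$, which follows from the fact that $AA^{*}$ and $A^{*}A$ have identical non-zero spectra (counted with multiplicity), and hence $|A|$ and $|A^{*}|$ have the same singular value function.
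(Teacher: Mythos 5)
Your proposal is correct and follows essentially the same approach as the paper: decompose into $\tfrac12(A\pm A^*)$, apply the quasi-triangle inequality for singular values together with $\mu(t,A^*)=\mu(t,A)$, and pass to the $\sup$ or $\limsup$. The only cosmetic difference is that you record the pointwise bound $\mu(t,\Re A)\le\mu(t/2,A)$ explicitly before taking suprema, whereas the paper invokes the quasi-triangle inequality at the level of the quasi-norms directly; the content is the same.
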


\begin{proof} By the quasi-triangle inequality, we have
$$\|\Re A\|_{p,\infty}=\frac12\|A+A^{\ast}\|_{p,\infty}\leq 2^{\frac1p}\|A\|_{p,\infty},$$
$$\|\Im A\|_{p,\infty}=\frac12\|A-A^{\ast}\|_{p,\infty}\leq 2^{\frac1p}\|A\|_{p,\infty}.$$
Similarly,
\begin{align*}
\limsup_{t\to\infty} t^\frac 1p \mu(t,\Re A)&=\frac12\limsup_{t\to\infty} t^\frac 1p \mu(t,A+A^{\ast})\\
&\leq \frac12\limsup_{t\to\infty} t^\frac 1p\big(\mu(\frac{t}{2},A)+\mu(\frac{t}{2},A^{\ast})\big)=2^{\frac1p}\limsup_{t\to\infty} t^\frac 1p \mu(t,A).
\end{align*}
Similarly, one proves the inequality for $\Im A.$
\end{proof}


\section{Abstract limit theorems}

\subsection{Birman--Solomyak limit lemmas}

Throughout this subsection, we fix a parameter $0<p<\infty$. For proofs of the following two lemmas we refer to \cite[Section 11.6]{BiSo}.

\begin{lem}\label{bs sep lemma} Let $A\in\mathcal{L}_{p,\infty}$ and $B\in(\mathcal L_{p,\infty})_0$. Then
	$$\limsup_{t\to\infty}t^{\frac1p}\mu(t,A+B)=\limsup_{t\to\infty}t^{\frac1p}\mu(t,A)$$
	and
	$$\liminf_{t\to\infty}t^{\frac1p}\mu(t,A+B)=\liminf_{t\to\infty}t^{\frac1p}\mu(t,A).$$
	In particular,
	$$\lim_{t\to\infty}t^{\frac1p}\mu(t,A+B)=\lim_{t\to\infty}t^{\frac1p}\mu(t,A) \,,$$
	provided that the limit on the right hand side exists.
\end{lem}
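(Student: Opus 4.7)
The basic input is the subadditivity of the singular value function: for any compact operators $X,Y$ and any $s,t>0$,
$$\mu(s+t,X+Y)\leq\mu(s,X)+\mu(t,Y).$$
Applying this with $X=A$, $Y=B$, $s=(1-\lambda)u$ and $t=\lambda u$ for a parameter $\lambda\in(0,1)$, I would multiply through by $u^{1/p}$ to obtain
$$u^{\frac1p}\mu(u,A+B)\leq(1-\lambda)^{-\frac1p}\bigl((1-\lambda)u\bigr)^{\frac1p}\mu((1-\lambda)u,A)+\lambda^{-\frac1p}(\lambda u)^{\frac1p}\mu(\lambda u,B).$$

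Now since $B\in(\mathcal{L}_{p,\infty})_0$, the last term tends to $0$ as $u\to\infty$. Taking $\limsup$ on both sides, and using that $u\mapsto(1-\lambda)u$ is a bijection of $(0,\infty)$ that preserves tails, I get
$$\limsup_{u\to\infty}u^{\frac1p}\mu(u,A+B)\leq(1-\lambda)^{-\frac1p}\limsup_{u\to\infty}u^{\frac1p}\mu(u,A).$$
Letting $\lambda\to 0^+$ yields the ``$\leq$'' direction for the $\limsup$. The reverse inequality follows by the symmetric argument: write $A=(A+B)+(-B)$, observe $-B\in(\mathcal{L}_{p,\infty})_0$, and repeat. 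This gives the $\limsup$ assertion.

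For the $\liminf$ assertion, the same inequality, rearranged as
$$u^{\frac1p}\mu(u,A+B)-\lambda^{-\frac1p}(\lambda u)^{\frac1p}\mu(\lambda u,B)\leq(1-\lambda)^{-\frac1p}\bigl((1-\lambda)u\bigr)^{\frac1p}\mu((1-\lambda)u,A),$$
allows one to take $\liminf$ on both sides (the subtracted quantity tends to $0$, so it does not spoil the $\liminf$) and conclude
$$\liminf_{u\to\infty}u^{\frac1p}\mu(u,A+B)\leq(1-\lambda)^{-\frac1p}\liminf_{u\to\infty}u^{\frac1p}\mu(u,A);$$
sending $\lambda\to 0^+$ and invoking the symmetric argument gives the $\liminf$ statement. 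The ``In particular'' claim is then automatic.

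There is no real obstacle here: the only thing to be careful about is keeping track of which side of the inequality the ``small'' operator $B$ appears on when taking $\liminf$, so that the error term from $B$ is added to (not subtracted from) the leading term; the rearrangement above handles this cleanly. The single structural ingredient is the singular-value triangle inequality $\mu(s+t,X+Y)\leq\mu(s,X)+\mu(t,Y)$, everything else is a two-line scaling argument letting $\lambda\to 0$.
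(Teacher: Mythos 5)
Your argument is correct. The paper itself does not prove Lemma~\ref{bs sep lemma} but simply cites \cite[Section 11.6]{BiSo}; your proof is the standard Birman--Solomyak argument, based exactly on the Ky Fan inequality $\mu(s+t,X+Y)\le\mu(s,X)+\mu(t,Y)$ with a $(1-\lambda)u,\lambda u$ split and the scaling $\lambda\to 0^+$, and all the steps (including the rearrangement for the $\liminf$ and the symmetric decomposition $A=(A+B)+(-B)$) are sound.
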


\begin{lem}\label{bs limit lemma} Let $(A_n)_{n\geq0}\subset\mathcal{L}_{p,\infty}$ be such that
\begin{enumerate}
\item $A_n\to A$ in $\mathcal{L}_{p,\infty};$
\item for every $n\geq0,$ the limit
$$\lim_{t\to\infty}t^{\frac1p}\mu(t,A_n)=c_n \qquad\text{exists}.$$
\end{enumerate}
Then the following limits exist and are equal,
$$\lim_{t\to\infty}t^{\frac1p}\mu(t,A)=\lim_{n\to\infty}c_n.$$
\end{lem}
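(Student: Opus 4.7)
The plan is to use the subadditivity inequality $\mu(s+t,X+Y)\le \mu(s,X)+\mu(t,Y)$ applied to the decompositions $A=A_n+(A-A_n)$ and $A_n=A+(A_n-A)$, while sidestepping the $2^{1/p}$ loss that the standard quasi-triangle inequality would introduce. The device is to split the singular value argument asymmetrically as $(t,\epsilon t)$ instead of $(t/2,t/2)$, with $\epsilon>0$ a small parameter that will be sent to zero only after the $n$-limit has been taken.

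Concretely, I would first apply subadditivity with parameters $t$ and $\epsilon t$ to $A=A_n+(A-A_n)$ and bound the perturbation using the $\mathcal L_{p,\infty}$-norm,
$$\mu((1+\epsilon)t,A)\le \mu(t,A_n)+\mu(\epsilon t,A-A_n)\le \mu(t,A_n)+(\epsilon t)^{-1/p}\|A-A_n\|_{p,\infty}.$$
Multiplying by $((1+\epsilon)t)^{1/p}$ and using hypothesis (2) to pass to $\limsup_{t\to\infty}$ yields
$$\limsup_{t\to\infty}t^{1/p}\mu(t,A)\le (1+\epsilon)^{1/p}\bigl(c_n+\epsilon^{-1/p}\|A-A_n\|_{p,\infty}\bigr).$$
The mirror-image argument, applied to $A_n=A+(A_n-A)$, produces
$$c_n\le (1+\epsilon)^{1/p}\bigl(\liminf_{t\to\infty}t^{1/p}\mu(t,A)+\epsilon^{-1/p}\|A-A_n\|_{p,\infty}\bigr).$$

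Finally, I would send $n\to\infty$ in both inequalities; by hypothesis (1), $\|A-A_n\|_{p,\infty}\to 0$ and the error terms vanish, the fixed factor $\epsilon^{-1/p}$ being harmless at this stage. Then letting $\epsilon\to 0$ gives
$$\limsup_n c_n\le \liminf_{t\to\infty}t^{1/p}\mu(t,A)\le \limsup_{t\to\infty}t^{1/p}\mu(t,A)\le \liminf_n c_n,$$
which collapses the four quantities to a common value and simultaneously establishes existence of $\lim_n c_n$ and of $\lim_{t\to\infty}t^{1/p}\mu(t,A)$, with equality. The only real obstacle is bookkeeping: one must keep the $(1+\epsilon)^{1/p}$ factor cleanly separated from the genuinely vanishing error $\epsilon^{-1/p}\|A-A_n\|_{p,\infty}$, so that the order of limits (first $n$, then $\epsilon$) is legitimate.
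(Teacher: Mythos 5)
Your proof is correct. The paper does not prove this lemma itself — it cites Birman--Solomyak \cite[Section 11.6]{BiSo} — but your argument is the standard one, and it is exactly the asymmetric $(t,\epsilon t)$-split of the subadditivity inequality $\mu(t_1+t_2,T+S)\le\mu(t_1,T)+\mu(t_2,S)$ that the authors themselves deploy in the proof of their Lemma~\ref{third abstract limit lemma}; the bookkeeping (limit in $n$ before $\epsilon\to0$) is handled correctly, the $(1+\epsilon)^{1/p}$ factor harmlessly absorbs the would-be $2^{1/p}$ loss, and the final chain of inequalities does establish both existence and equality of the two limits.
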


The following lemma is elementary to prove, but useful in applications. Recall that the operators $(A_k)_{k\geq0}\subset B(H)$ are called \emph{pairwise orthogonal} if $A_kA_l=A_k^{\ast}A_l=0$ whenever $k\neq l.$

\begin{lem}\label{bs pairwise orthogonal lemma} Let $(A_k)_{k=0}^n\subset\mathcal{L}_{p,\infty}$ be a sequence of pairwise orthogonal operators such that for all $0\leq k\leq n$, the limits
$$\lim_{t\to\infty}t^{\frac1p}\mu(t,A_k)=c_k
\qquad\text{exist}.$$
Then
$$\lim_{t\to\infty}t^{\frac1p}\mu(t,\sum_{k=0}^nA_k)= \left( \sum_{k=0}^nc_k^p \right)^{\frac1p}.$$
\end{lem}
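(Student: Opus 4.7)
The plan is to reduce the statement to a computation with counting functions of the direct sum $\bigoplus_{k=0}^n A_k$, exploiting pairwise orthogonality.

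First, I would invoke the identity $\mu(\sum_{k=0}^n A_k) = \mu(\bigoplus_{k=0}^n A_k)$ recorded in Subsection \ref{sec:compact}, which reduces the lemma to computing the asymptotics of the singular values of the direct sum $B := \bigoplus_{k=0}^n A_k$. The nonzero eigenvalues of $|B|$ are precisely the union (with multiplicities) of the nonzero eigenvalues of the $|A_k|$, so the distribution function
$$ n_T(\lambda) := |\{s > 0 : \mu(s,T) > \lambda\}|, \qquad \lambda > 0, $$
is additive over orthogonal direct sums:
$$ n_B(\lambda) = \sum_{k=0}^n n_{A_k}(\lambda), \qquad \lambda > 0. $$

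Next, I would translate the singular-value asymptotics into distribution-function asymptotics. The standard equivalence is that for $T \in \mathcal{L}_{p,\infty}$, the statement $\lim_{t\to\infty} t^{1/p}\mu(t,T) = c$ is equivalent to $\lim_{\lambda\to 0^+} \lambda^p n_T(\lambda) = c^p$. This is an elementary consequence of the fact that $\mu(\cdot,T)$ and $n_T$ are essentially mutual inverses (both are nonincreasing, right-continuous, vanishing at infinity); one sandwiches $n_T(\lambda)$ between the values of $t$ making $\mu(t,T)$ equal to $\lambda \pm \varepsilon$ and uses the assumed asymptotics of $\mu$. Applying this equivalence to each $A_k$ gives $\lambda^p n_{A_k}(\lambda) \to c_k^p$ as $\lambda \to 0^+$.

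Summing over the finitely many indices $k=0,\ldots,n$ and using additivity,
$$ \lim_{\lambda\to 0^+} \lambda^p n_B(\lambda) = \sum_{k=0}^n c_k^p. $$
Applying the equivalence in the other direction yields $\lim_{t\to\infty} t^{1/p} \mu(t,B) = \left(\sum_{k=0}^n c_k^p\right)^{1/p}$, which combined with the first step proves the lemma.

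The only step that requires any care is the equivalence between the asymptotics of $\mu(\cdot,T)$ and $n_T$; this is the main (though standard and modest) obstacle, and the rest is bookkeeping. Crucially, finiteness of $n$ means there is no issue with interchanging limits or with the tail of a series: the sum over $k$ is finite, so additivity of $n_B$ passes to the limit immediately.
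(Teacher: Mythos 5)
The paper states this lemma without proof, calling it ``elementary to prove,'' so there is no paper argument to compare against. Your proof via counting functions is correct and is indeed the standard way to establish this: the reduction $\mu(\sum_k A_k)=\mu(\bigoplus_k A_k)$ is recorded in Subsection~\ref{sec:compact}, the additivity $n_B(\lambda)=\sum_k n_{A_k}(\lambda)$ for the direct sum is immediate from $|B|=\bigoplus_k |A_k|$, and the equivalence $\lim_{t\to\infty}t^{1/p}\mu(t,T)=c \iff \lim_{\lambda\to 0^+}\lambda^p n_T(\lambda)=c^p$ (for $c\geq 0$) follows from the mutual pseudo-inverse relation between $\mu(\cdot,T)$ and $n_T$, exactly as you sketch. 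The finite index range removes any concern about interchanging limits. No gaps.
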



\subsection{New limit lemmas}

\begin{lem}\label{first abstract limit lemma} Let $p>0.$ Let $A\in\mathcal{L}_{p,\infty}$ be such that the limit
$$\lim_{t\to\infty}t^{\frac1p}\mu(t,A)
\qquad\text{exists}.$$
Let $(p_l)_{0\leq l<k}$ be pairwise orthogonal projections in $H$ summing up to $1$ such that
\begin{enumerate}
\item $[A,p_l]\in(\mathcal{L}_{p,\infty})_0$ for $0\leq l<k;$
\item the operators $(p_lAp_l)_{0\leq l<k}$ are pairwise unitarily equivalent.
\end{enumerate}
Then the following limits exist and are equal,
$$\lim_{t\to\infty}t^{\frac1p}\mu(t,Ap_l)=\lim_{t\to\infty}t^{\frac1p}\mu(t,p_lAp_l)=k^{-\frac1p}\lim_{t\to\infty}t^{\frac1p}\mu(t,A),\quad 0\leq l<k.$$
\end{lem}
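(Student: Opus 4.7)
My strategy is to separate $A$ into a ``diagonal'' block $D := \sum_{l=0}^{k-1} p_l A p_l$ and an off-diagonal remainder, show the remainder lies in $(\mathcal{L}_{p,\infty})_0$, and then reduce $D$ to a single unitarily equivalent copy of one of its summands via the identification $\mu(D) = \mu\bigl(\bigoplus_l p_l A p_l\bigr)$.

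First, I would use $\sum_l p_l = 1$ to decompose $A = \sum_{l,m} p_l A p_m$. For $l \neq m$ the identity $p_l A p_m = p_l[A,p_m]$ (using $p_l p_m = 0$), together with hypothesis (i) and the ideal property of $(\mathcal{L}_{p,\infty})_0$, puts every off-diagonal term in $(\mathcal{L}_{p,\infty})_0$. Summing the finitely many off-diagonal pairs gives $A - D \in (\mathcal{L}_{p,\infty})_0$; a parallel argument with only one projection applied on the right yields $A p_l - p_l A p_l = \sum_{m \neq l} p_m [A,p_l] \in (\mathcal{L}_{p,\infty})_0$. Two applications of Lemma~\ref{bs sep lemma} then reduce the problem to computing $\lim_{t\to\infty} t^{1/p}\mu(t, p_l A p_l)$ and identifying it with $k^{-1/p}\lim_{t\to\infty}t^{1/p}\mu(t,A)$.

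Next, I would observe that the $p_l A p_l$ are pairwise orthogonal, since $p_l p_m = 0$ for $l \neq m$; hence, by the formula for singular values of sums of pairwise orthogonal operators recalled in Section~2,
\[
\mu(D) = \mu\Bigl(\bigoplus_{l=0}^{k-1} p_l A p_l\Bigr).
\]
By hypothesis (ii), this direct sum is unitarily equivalent to $B^{\oplus k}$ with $B := p_0 A p_0$, hence has the same singular values. A direct calculation from the definition gives $\mu(n, B^{\oplus k}) = \mu(\lfloor n/k\rfloor, B)$ for $n\in\mathbb N$, and since $(km+r)^{1/p}/m^{1/p} \to k^{1/p}$ uniformly in $0 \leq r < k$, the limit $\lim_{t\to\infty} t^{1/p}\mu(t,D)$ exists if and only if $\lim_{s\to\infty} s^{1/p}\mu(s,B)$ exists, and in that case the former equals $k^{1/p}$ times the latter.

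Putting everything together: by hypothesis $\lim_{t\to\infty} t^{1/p}\mu(t,A)$ exists, so Lemma~\ref{bs sep lemma} transfers this limit to $D$, and the previous paragraph then produces $\lim_{s\to\infty} s^{1/p}\mu(s,B) = k^{-1/p}\lim_{t\to\infty} t^{1/p}\mu(t,A)$; unitary equivalence extends this to every $p_l A p_l$, and the first reduction extends it further to $A p_l$. The main technical nuisance will be the bookkeeping in the singular-value identity for the $k$-fold direct sum---tracking the piecewise-constant structure of $\mu(\cdot,T)$ and passing from integer to continuous indices---but this is elementary; the conceptual content is entirely contained in the commutator hypothesis and Lemma~\ref{bs sep lemma}.
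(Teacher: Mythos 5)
Your proposal is correct and follows essentially the same route as the paper's proof: decompose $A$ into the block-diagonal part $D=\sum_l p_lAp_l$ and an $(\mathcal L_{p,\infty})_0$ remainder (using $p_lAp_m=p_l[A,p_m]$ for $l\neq m$), transfer the limit to $D$ via Lemma~\ref{bs sep lemma}, then use pairwise orthogonality and unitary equivalence to identify $\mu(D)$ with the $k$-fold direct sum of $p_0Ap_0$ and extract the $k^{-1/p}$ factor. The only cosmetic difference is that the paper packages the direct-sum rescaling as the continuous-index identity $\mu(t,D)=\mu(t/k,p_lAp_l)$, while you carry out the equivalent integer-index bookkeeping $\mu(n,B^{\oplus k})=\mu(\lfloor n/k\rfloor,B)$ and then pass to the limit; both are valid and equally elementary.
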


\begin{proof} Set $B=\sum_{l=0}^{k-1}p_lAp_l.$ It follows from the assumption that
	$$
	B-A= \sum_{l=0}^{k-1} p_l [A,p_l] \in(\mathcal{L}_{p,\infty})_0.
	$$
	By the assumption on $A$ and by Lemma \ref{bs sep lemma}, we have
$$\lim_{t\to\infty}t^{\frac1p}\mu(t,B)=\lim_{t\to\infty}t^{\frac1p}\mu(t,A).$$
Since the operators $(p_lAp_l)_{0\leq l<k}$ are pairwise orthogonal and pairwise unitarily equivalent, it follows that
$$\mu(t,B)=\mu(\frac{t}{k},p_lAp_l),\quad t>0,\quad 0\leq l<k.$$
Hence, we have
$$\lim_{t\to\infty}t^{\frac1p}\mu(\frac{t}{k},p_lAp_l)=\lim_{t\to\infty}t^{\frac1p}\mu(t,A),\quad 0\leq l<k.$$
Equivalently, we have
$$\lim_{t\to\infty}t^{\frac1p}\mu(t,p_lAp_l)=k^{-\frac1p}\lim_{t\to\infty}t^{\frac1p}\mu(t,A),\quad 0\leq l<k.$$
Since, by assumption, $Ap_l - p_lAp_l = [A,p_l]p_l \in(\mathcal{L}_{p,\infty})_0,$ it now follows from Lemma \ref{bs sep lemma} that the following limits exist and satisfy
$$\lim_{t\to\infty}t^{\frac1p}\mu(t,Ap_l)=\lim_{t\to\infty}t^{\frac1p}\mu(t,p_lAp_l),\quad 0\leq l<k.$$
This proves the lemma.
\end{proof}

\begin{lem}\label{second abstract limit lemma} Let $p>0.$ Let $A\in\mathcal{L}_{p,\infty}$ and let $(p_l)_{0\leq l<k}$ be pairwise orthogonal projections in $H$ summing up to $1$ such that
\begin{enumerate}
\item for every $0\leq l<k,$ the limit
$$\lim_{t\to\infty}t^{\frac1p}\mu(t,p_lA)=c_l
\qquad\text{exists};$$
\item $[A,p_l]\in(\mathcal{L}_{p,\infty})_0$ for every $0\leq l<k$.
\end{enumerate}
Then
$$\lim_{t\to\infty}t^{\frac1p}\mu(t,A)=(\sum_{0\leq l<k}c_l^p)^{\frac1p}.$$
\end{lem}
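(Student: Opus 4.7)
The plan is to pass from the row-wise decomposition $A = \sum_l p_l A$, which is not jointly orthogonal, to the diagonal decomposition $B := \sum_l p_l A p_l$, which is pairwise orthogonal, and then apply Lemmas \ref{bs sep lemma} and \ref{bs pairwise orthogonal lemma}. All the work is in identifying the off-diagonal and diagonal error terms as belonging to $(\mathcal{L}_{p,\infty})_0$ via the commutator hypothesis (2).

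\textbf{Step 1: off-diagonal error.} Since $\sum_l p_l = 1$ and $p_l p_m = 0$ for $l\neq m$, I would write
$$
A - B = \sum_l p_l A - \sum_l p_l A p_l = \sum_l \sum_{m\neq l} p_l A p_m.
$$
For $l\neq m$, using $p_l p_m = 0$, one has $p_l A p_m = p_l [A, p_m]$, so each summand lies in $(\mathcal{L}_{p,\infty})_0$ by hypothesis (2). Being a finite sum, $A - B \in (\mathcal L_{p,\infty})_0$.

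\textbf{Step 2: diagonal limits.} For each fixed $l$, I would expand
$$
p_l A - p_l A p_l = p_l A (1 - p_l) = -p_l [A, p_l] \in (\mathcal L_{p,\infty})_0,
$$
again by hypothesis (2). Lemma \ref{bs sep lemma} combined with hypothesis (1) then yields
$$
\lim_{t\to\infty} t^{1/p} \mu(t, p_l A p_l) = \lim_{t\to\infty} t^{1/p}\mu(t, p_l A) = c_l.
$$

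\textbf{Step 3: assembling the diagonal.} The operators $\{p_l A p_l\}_{0\leq l<k}$ are pairwise orthogonal in the sense of the excerpt, since $(p_l A p_l)(p_m A p_m) = 0$ and $(p_l A p_l)^{\ast}(p_m A p_m) = p_l A^{\ast} p_l p_m A p_m = 0$ for $l\neq m$. By Lemma \ref{bs pairwise orthogonal lemma} and Step 2,
$$
\lim_{t\to\infty} t^{1/p} \mu(t, B) = \Bigl( \sum_{l=0}^{k-1} c_l^p \Bigr)^{1/p}.
$$
Finally, since $A - B \in (\mathcal L_{p,\infty})_0$ by Step 1, Lemma \ref{bs sep lemma} gives
$$
\lim_{t\to\infty} t^{1/p} \mu(t, A) = \lim_{t\to\infty} t^{1/p}\mu(t, B) = \Bigl( \sum_{l=0}^{k-1} c_l^p \Bigr)^{1/p},
$$
as claimed.

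I do not anticipate a serious obstacle here: the argument is purely formal manipulation of commutators with the projections $p_l$, and the only ingredient beyond algebra is the ideal property of $(\mathcal L_{p,\infty})_0$ (namely, that multiplication by a bounded operator preserves it). The mild subtlety worth checking in Step 2 is that although we are only given $\lim t^{1/p}\mu(t, p_l A)$ exists, Lemma \ref{bs sep lemma} applied to $p_l A p_l = p_l A + (p_l A p_l - p_l A)$, with the second summand in $(\mathcal L_{p,\infty})_0$, legitimately transfers the existence of the limit, so the pairwise-orthogonal lemma in Step 3 is applicable.
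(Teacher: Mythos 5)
Your proposal is correct and follows essentially the same route as the paper: define $B=\sum_l p_lAp_l$, show $p_lAp_l-p_lA\in(\mathcal L_{p,\infty})_0$ to transfer the limits to the diagonal blocks via Lemma~\ref{bs sep lemma}, apply Lemma~\ref{bs pairwise orthogonal lemma}, and then use $A-B\in(\mathcal L_{p,\infty})_0$ to conclude. The only cosmetic difference is that the paper writes $B-A=\sum_l p_l[A,p_l]$ directly, while you expand it as the double sum $\sum_{l}\sum_{m\neq l}p_l[A,p_m]$; both yield the same $(\mathcal L_{p,\infty})_0$ membership.
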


\begin{proof} Since $p_lAp_l-p_lA = p_l[A,p_l] \in(\mathcal{L}_{p,\infty})_0,$ it follows from the first assumption and Lemma \ref{bs sep lemma} that
$$\lim_{t\to\infty}t^{\frac1p}\mu(t,p_lAp_l)=c_l,\quad 0\leq l<k.$$
Set $B=\sum_{l=0}^{k-1}p_lAp_l.$ Since the operators $(p_lAp_l)_{0\leq l<k}$ are pairwise orthogonal, it follows, using Lemma \ref{bs pairwise orthogonal lemma}, that
$$\lim_{t\to\infty}t^{\frac1p}\mu(t,B)=(\sum_{l=0}^{k-1}c_l^p)^{\frac1p}.$$
Since, by assumption, $B-A=\sum_{l=0}^{k-1} p_l [A,p_l] \in(\mathcal{L}_{p,\infty})_0$, the assertion follows from Lemma \ref{bs sep lemma}.
\end{proof}

In the following lemma, we deal with subsets $I\subset[0,1)^n$. We call such a subset a cube if $I=[\mathbf a,\mathbf b) = [a_1,b_1)\times\cdots[a_n,b_n)$ for some $\mathbf a=(a_1,\ldots,a_n)$, $\mathbf b=(b_1,\ldots,b_n)$ in $[0,1)^n$ with $b_n-a_n=\ell$ for all $n=1,\ldots,N$. Two cubes are congruent if they have the same value of $\ell$.

\begin{lem}\label{third abstract limit lemma}
Let $p>0$ and $n\in\mathbb{N}.$ Let $A\in\mathcal{L}_{p,\infty}$ be such that the limit
$$\lim_{t\to\infty}t^{\frac1p}\mu(t,A)
\qquad\text{exists}.$$
If $\nu:[0,1)^n\to B(H)$ is a spectral measure such that
\begin{enumerate}
\item\label{tall aa} for any congruent cubes $I_1,I_2\subset[0,1)^n,$ there is a unitary operator $U$ such that $A=U^{-1} AU$ and $\nu(I_1)=U^{-1}\nu(I_2)U;$
\item\label{tall ab} for every Borel set $I\subset [0,1)^n,$ we have
$$[A,\nu(I)]\subset(\mathcal{L}_{p,\infty})_0;$$
\item\label{tall ac} there is a function $\psi$ with $\psi(+0)=0$ such that, for every Borel set $I\subset[0,1)^n,$ we have
$$\limsup_{t\to\infty}t^{\frac1p}\mu(t,A\nu(I))\leq\psi(m(I));$$
\end{enumerate}
then, for every Borel set $I\subset[0,1)^m,$ one has
$$\lim_{t\to\infty}t^{\frac1p}\mu(t,A\nu(I))=m(I)^{\frac1p}\lim_{t\to\infty}t^{\frac1p}\mu(t,A).$$
\end{lem}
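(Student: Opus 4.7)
The plan is to first establish the conclusion when $I$ is a finite union of cubes from the uniform $k^n$-partition of $[0,1)^n$, and then pass to arbitrary Borel $I$ by a quantitative dyadic approximation that uses hypothesis (\ref{tall ac}) to control the error. Set $c:=\lim_{t\to\infty}t^{1/p}\mu(t,A)$ and, for brevity, write $\overline c(T):=\limsup_{t\to\infty}t^{1/p}\mu(t,T)$ and $\underline c(T):=\liminf_{t\to\infty}t^{1/p}\mu(t,T)$.

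For the dyadic case, fix $k\in\mathbb N$, let $Q_1,\dots,Q_{k^n}$ be the $1/k$-cubes partitioning $[0,1)^n$, and set $p_l=\nu(Q_l)$. These projections are pairwise orthogonal and sum to $1$; hypothesis (\ref{tall aa}) applied to pairs of congruent cubes gives the pairwise unitary equivalence of $(p_lAp_l)_l$, and hypothesis (\ref{tall ab}) gives $[A,p_l]\in(\mathcal L_{p,\infty})_0$. Lemma \ref{first abstract limit lemma} then yields $\lim_{t\to\infty}t^{1/p}\mu(t,p_lAp_l)=k^{-n/p}c$. For any $J=\bigcup_{j=1}^m Q_{l_j}$, the decomposition $A\nu(J)=\sum_j p_{l_j}Ap_{l_j}+\sum_j[A,p_{l_j}]p_{l_j}$, together with (\ref{tall ab}) and Lemmas \ref{bs sep lemma} and \ref{bs pairwise orthogonal lemma}, gives $\lim_{t\to\infty}t^{1/p}\mu(t,A\nu(J))=m(J)^{1/p}c$.

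For the general case, given a Borel $I\subset[0,1)^n$ and $\epsilon>0$, regularity of Lebesgue measure produces a compact $K\subset I$ and an open $U\supset I$ with $m(U\setminus K)<\epsilon$; choosing $k$ with $\sqrt n/k<\mathrm{dist}(K,U^c)$, the union $J$ of all $Q_l$ meeting $K$ is a finite dyadic union satisfying $K\subset J\subset U$, whence $m(I\triangle J)\le 2\epsilon$. Using $\nu(I)=\nu(J)+\nu(I\setminus J)-\nu(J\setminus I)$ and the three-term quasi-triangle inequality $\mu(t_1+t_2+t_3,T_1+T_2+T_3)\le\mu(t_1,T_1)+\mu(t_2,T_2)+\mu(t_3,T_3)$ with the asymmetric split $t=(1-2\delta)t+\delta t+\delta t$, hypothesis (\ref{tall ac}) and the dyadic case yield
$$
\overline c(A\nu(I))\le(1-2\delta)^{-1/p}m(J)^{1/p}c+2\delta^{-1/p}\psi(2\epsilon).
$$
Starting instead from $A\nu(J)=A\nu(I)+A\nu(J\setminus I)-A\nu(I\setminus J)$ and using the elementary inequality $\liminf(a+b+c)\le\liminf a+\limsup b+\limsup c$, one obtains the matching lower bound
$$
m(J)^{1/p}c\le(1-2\delta)^{-1/p}\underline c(A\nu(I))+2\delta^{-1/p}\psi(2\epsilon).
$$
Letting $\epsilon\to0$ while choosing $\delta=\delta(\epsilon)\to0$ slowly enough that $\delta^{-1/p}\psi(2\epsilon)\to0$ (e.g.\ $\delta=\psi(2\epsilon)^{p/(p+1)}$), and using $m(J)\to m(I)$, both estimates collapse to $m(I)^{1/p}c$, giving $\underline c(A\nu(I))=\overline c(A\nu(I))=m(I)^{1/p}c$.

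The main obstacle, and the reason the preceding abstract lemmas do not immediately suffice, is the $2^{1/p}$-factor that a symmetric split in the quasi-triangle inequality for $\mathcal L_{p,\infty}$ would introduce; the asymmetric split $(1-2\delta,\delta,\delta)$ in combination with $\psi(+0)=0$ is what makes the error vanish in the limit and forces the two inequalities to be sharp.
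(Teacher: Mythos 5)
Your proof is correct and follows essentially the same two-stage strategy as the paper: first settle the case where $I$ is a finite union of $k^{-1}$-cubes via Lemmas \ref{first abstract limit lemma} and \ref{second abstract limit lemma}, then pass to arbitrary Borel $I$ by approximating in measure and exploiting an asymmetric split of the time parameter so that, in combination with $\psi(+0)=0$, the symmetric-difference error vanishes. The one genuinely different, and slightly more elegant, move in the paper is that it avoids the three-term decomposition $\nu(I)=\nu(J)+\nu(I\setminus J)-\nu(J\setminus I)$: using $\nu(I),\nu(J)\le\nu(I\cup J)$ and hence $\mu(t,A\nu(I))\le\mu(t,A\nu(I\cup J))$, together with $\nu(I\cup J)=\nu(J)+\nu(I\setminus J)=\nu(I)+\nu(J\setminus I)$, it reduces each direction to a two-term split $\mu(t(1+\epsilon),\cdot)\le\mu(t,\cdot)+\mu(t\epsilon,\cdot)$, where the weight $1+\epsilon$ tends to $1$. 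This saves one error term and one parameter, whereas you carry two parameters $(\epsilon,\delta)$ and must couple them (e.g.\ $\delta=\psi(2\epsilon)^{p/(p+1)}$); the paper instead couples the single approximation parameter to $\psi$ directly by choosing $J$ with $\psi(m(J\triangle I))<\epsilon^{1+1/p}$. Both routes are sound, and your closing remark about the $2^{1/p}$-obstruction from a symmetric split and the role of $\psi(+0)=0$ correctly identifies the crux.
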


\begin{proof} To lighten the notations, we assume
$$\lim_{t\to\infty}t^{\frac1p}\mu(t,A)=1.$$	

Suppose first $I=[\frac{{\bf k}_0}{k},\frac{{\bf k}_0+{\bf 1}}{k})$ for some $k\in\mathbb{N}$ and for some ${\bf k}_0\in \{0,\cdots,k-1\}^n.$ Set $p_{{\bf k}}=\nu([\frac{{\bf k}}{k},\frac{{\bf k}+{\bf 1}}{k})),$ ${\bf k}\in\{0,\cdots,k-1\}^n.$ By assumption \eqref{tall aa}, the operators $p_{{\bf k}}Ap_{{\bf k}},$ ${\bf k}\in\{0,\cdots,k-1\}^n,$ are pairwise unitarily equivalent. By assumption \eqref{tall ab}, $[A,p_{{\bf k}}]\in(\mathcal{L}_{p,\infty})_0.$ Hence, the assumptions in Lemma \ref{first abstract limit lemma} hold for the operator $A$ and for the projections $(p_{{\bf k}})_{\{0,\cdots,k-1\}^n}.$ By Lemma \ref{first abstract limit lemma}, the assertion follows for such $I.$
	
Suppose $I$ is a finite union of cubes as in the preceding paragraph. That is, $I=\cup_{0\leq l<L} I_l,$ where $(I_l)_{0\leq l<L}$ are pairwise disjoint cubes as in the preceding paragraph. Set $p_l=\nu(I_l),$ $0\leq l<L.$ By the preceding paragraph, the first assumption in Lemma \ref{second abstract limit lemma} holds. By assumption \eqref{tall ab}, the second assumption in Lemma \ref{second abstract limit lemma} holds. By Lemma \ref{second abstract limit lemma}, the assertion for such $I$ follows.

Let now $I$ be arbitrary. Fix $\epsilon>0$ and choose $J$ as in the preceding paragraph such that
$$m(J\triangle A)<\epsilon,\quad \psi(m(J\triangle I))<\epsilon^{1+\frac1p}.$$
Recall the inequality
$$\mu(t_1+t_2,T+S)\leq\mu(t_1,T)+\mu(t_2,S).$$
Since
$$\nu(I),\nu(J)\leq \nu(I\cup J)=\nu(J)+\nu(I\backslash J)=\nu(I)+\nu(J\backslash I),$$
it follows that
$$\mu(t(1+\epsilon),A\nu(I))\leq\mu(t(1+\epsilon),A\nu(I\cup J))\leq \mu(t,A\nu(J))+\mu(t\epsilon,A\nu(I\backslash J)),$$
$$\mu(t(1+\epsilon),A\nu(J))\leq\mu(t(1+\epsilon),A\nu(I\cup J))\leq \mu(t,A\nu(I))+\mu(t\epsilon,A\nu(J\backslash I)).$$
Thus,
\begin{align*}
(1+\epsilon)^{-\frac1p}\limsup_{t\to\infty}t^{\frac1p}\mu(t,A\nu(I))&=\limsup_{t\to\infty}t^{\frac1p}\mu(t(1+\epsilon),A\nu(I))\\
&\leq\limsup_{t\to\infty}\mu(t,A\nu(J))+\mu(t\epsilon,A\nu(I\backslash J))\\
&\leq\limsup_{t\to\infty}t^{\frac1p}\mu(t,A\nu(J))+\limsup_{t\to\infty}t^{\frac1p}\mu(t\epsilon,A\nu(I\backslash J))\\
&=\limsup_{t\to\infty}t^{\frac1p}\mu(t,A\nu(J)) \\
& \quad +\epsilon^{-\frac1p}\limsup_{t\to\infty}t^{\frac1p}\mu(t,A\nu(I\backslash J)).
\end{align*}
Recall that the assertion is already proved for $J.$ By assumption \eqref{tall ac}, we have
\begin{equation}\label{tall eq0}
(1+\epsilon)^{-\frac1p}\limsup_{t\to\infty}t^{\frac1p}\mu(t,A\nu(I))\leq \nu(J)^{\frac1p}+\epsilon^{-\frac1p}\psi(m(I\backslash J))\leq (\nu(I)+\epsilon)^{\frac1p}+\epsilon.
\end{equation}
Similarly,
\begin{align*}
(1+\epsilon)^{-\frac1p}(m(I)-\epsilon)^{\frac1p}&\leq (1+\epsilon)^{-\frac1p}m(J)^{\frac1p}\\
&=\liminf_{t\to\infty}t^{\frac1p}\mu(t(1+\epsilon),A\nu(J))\\
&\leq\liminf_{t\to\infty}t^{\frac1p}\mu(t,A\nu(I))+\limsup_{t\to\infty}t^{\frac1p}\mu(t\epsilon,A\nu(J\backslash I))\\
&\leq\liminf_{t\to\infty}t^{\frac1p}\mu(t,A\nu(I))+\epsilon^{-\frac1p}\limsup_{t\to\infty}t^{\frac1p}\mu(t,A\nu(J\backslash I)).
\end{align*}
By assumption \eqref{tall ac}, we have
\begin{align}\label{tall eq1}
(1+\epsilon)^{-\frac1p}(m(I)-\epsilon)^{\frac1p}
& \leq\liminf_{t\to\infty}t^{\frac1p}\mu(t,A\nu(I))+\epsilon^{-\frac1p}\psi(m(J\backslash I)) \\ & \leq\liminf_{t\to\infty}t^{\frac1p}\mu(t,A\nu(I))+\epsilon . \notag
\end{align}
Since $\epsilon>0$ in \eqref{tall eq0} and \eqref{tall eq1} is arbitrarily small, it follows that
$$\limsup_{t\to\infty}t^{\frac1p}\mu(t,A\nu(I))\leq\nu(I)^{\frac1p}\leq\liminf_{t\to\infty}t^{\frac1p}\mu(t,A\nu(I)).$$
This proves the assertion for an arbitrary $I.$
\end{proof}


\section{Commutator estimates}

In this section, we prove the following two results about commutators.

\begin{thm}\label{commutator theorem} Let $f\in L_{\infty}(\mathbb{R}^d)$ be compactly supported and let $g\in L_{\infty}(\mathbb{S}^{d-1}).$ We have
$$[\pi_1(f)(1-\Delta)^{-\frac12},\pi_2(g)]\in(\mathcal{L}_{d,\infty})_0,\quad [\pi_1(f),\pi_2(g)(1-\Delta)^{-\frac12}]\in(\mathcal{L}_{d,\infty})_0.$$
\end{thm}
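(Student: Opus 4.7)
The plan is to reduce both commutators in the theorem to the single assertion $[\pi_1(f),\pi_2(g)](1-\Delta)^{-1/2}\in (\mathcal{L}_{d,\infty})_0$ and then prove it via a double approximation. Since $\pi_2(g)$ and $(1-\Delta)^{-1/2}$ are both Fourier multipliers they commute, so Leibniz gives $[\pi_1(f)(1-\Delta)^{-1/2},\pi_2(g)]=[\pi_1(f),\pi_2(g)](1-\Delta)^{-1/2}$ and
$$
[\pi_1(f),\pi_2(g)(1-\Delta)^{-1/2}] = [\pi_1(f),\pi_2(g)](1-\Delta)^{-1/2} + \pi_2(g)\,[\pi_1(f),(1-\Delta)^{-1/2}].
$$
The residual $[\pi_1(f),(1-\Delta)^{-1/2}]$ already lies in $(\mathcal{L}_{d,\infty})_0$: for $f\in C_c^\infty(\mathbb R^d)$ it is a compactly supported pseudodifferential operator of order $-2$, hence in $\mathcal{L}_{d/2,\infty}\subset(\mathcal{L}_{d,\infty})_0$; for general compactly supported $f\in L_\infty$ one approximates in $\|\cdot\|_d$ by $C_c^\infty$ functions and appeals to the Cwikel--Solomyak bound $\|M_{f_n-f}(1-\Delta)^{-1/2}\|_{d,\infty}\lesssim\|f_n-f\|_d$ together with closedness of $(\mathcal{L}_{d,\infty})_0$. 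By the ideal property this disposes of the extra term and reduces the theorem to the single claim.

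For the approximation in $f$, take $f_n\in C_c^\infty$ with $\|f_n-f\|_d\to 0$ and $\sup_n\|f_n\|_\infty<\infty$. Both summands of
$[\pi_1(f_n-f),\pi_2(g)](1-\Delta)^{-1/2} = M_{f_n-f}\,h_g(D) - \pi_2(g)M_{f_n-f}(1-\Delta)^{-1/2}$,
where $h_g(\xi):=g(\xi/|\xi|)(1+|\xi|^2)^{-1/2}\in L_{d,\infty}(\mathbb R^d)$ (since it is dominated by $\|g\|_\infty(1+|\xi|^2)^{-1/2}$), are $O(\|f_n-f\|_d)$ in $\mathcal{L}_{d,\infty}$ by Cwikel--Solomyak. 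Hence one may assume $f\in C_c^\infty$. If in addition $g\in C^\infty(\mathbb{S}^{d-1})$, choosing a cutoff $\chi\in C_c^\infty(\mathbb R^d)$ supported near $\xi=0$ splits $\pi_2(g)$ into a classical zero-order pseudodifferential operator plus a convolution with a Schwartz function (whose product with $M_f$ is Hilbert--Schmidt). The standard symbol calculus then identifies $[\pi_1(f),\pi_2(g)](1-\Delta)^{-1/2}$ as a pseudodifferential operator of order $-2$ with compact $x$-support, which by the Birman--Solomyak Weyl law lies in $\mathcal{L}_{d/2,\infty}\subset(\mathcal{L}_{d,\infty})_0$.

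For the approximation in $g$, fix $f\in C_c^\infty$ and, by mollification on the sphere, pick $g_n\in C^\infty(\mathbb S^{d-1})$ with $g_n\to g$ pointwise a.e., $\|g_n\|_\infty\leq\|g\|_\infty$, and $\|g_n-g\|_{L_d(\mathbb S^{d-1})}\to 0$; an elementary polar computation gives $\|h_{g_n-g}\|_{d,\infty}\leq d^{-1/d}\|g_n-g\|_{L_d(\mathbb S^{d-1})}$. Inserting $M_f(1-\Delta)^{-1/2}=(1-\Delta)^{-1/2}M_f+[M_f,(1-\Delta)^{-1/2}]$ and using $\pi_2(g_n-g)(1-\Delta)^{-1/2}=h_{g_n-g}(D)$ yields
\begin{align*}
[\pi_1(f),\pi_2(g_n-g)](1-\Delta)^{-1/2}
&= M_f\,h_{g_n-g}(D) - h_{g_n-g}(D)\,M_f \\
&\quad - \pi_2(g_n-g)\,[M_f,(1-\Delta)^{-1/2}],
\end{align*}
and Cwikel--Solomyak bounds the first two terms by a constant multiple of $\|f\|_d\,\|g_n-g\|_{L_d(\mathbb S^{d-1})}$ in $\mathcal{L}_{d,\infty}$, which tends to zero.

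The main obstacle is the third term, because for $g\in L_\infty(\mathbb S^{d-1})\setminus C(\mathbb S^{d-1})$ uniform approximation is unavailable and $\|\pi_2(g_n-g)\|_\infty\not\to 0$. The resolution exploits the fact, established above, that $[M_f,(1-\Delta)^{-1/2}]$ sits in the \emph{smaller} ideal $(\mathcal{L}_{d,\infty})_0$, via the soft observation: if $K\in(\mathcal{L}_{d,\infty})_0$ and $B_n\to 0$ in the strong operator topology with $\sup_n\|B_n\|_\infty<\infty$, then $B_nK\to 0$ in $\mathcal{L}_{d,\infty}$; this is proved by splitting $K=F+R$ with $F$ finite-rank and $\|R\|_{d,\infty}<\varepsilon$, giving $\|B_nR\|_{d,\infty}\lesssim\varepsilon$ while $\|B_nF\|_{d,\infty}\leq(\mathrm{rank}\,F)^{1/d}\|B_nF\|_\infty\to 0$. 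Since $g_n\to g$ pointwise boundedly implies $\pi_2(g_n-g)\to 0$ strongly (by dominated convergence in $L_2$), the third term vanishes in $\mathcal{L}_{d,\infty}$, and closedness of $(\mathcal{L}_{d,\infty})_0$ upgrades the smooth case to the full claim.
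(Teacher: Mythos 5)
Your route is genuinely different from the paper's and contains a nice idea, but as written it has a gap at $d=2$ that is not cosmetic.

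On the structure: you reduce both commutators to $[\pi_1(f),\pi_2(g)](1-\Delta)^{-1/2}$ via Leibniz and handle the residual $[\pi_1(f),(1-\Delta)^{-1/2}]$ directly; for smooth $f,g$ you invoke a $\Psi$DO Weyl law where the paper instead uses a double operator integral identity (Lemma~\ref{first commutator lemma}) combined with a Bochner integral over the unitary group $e^{i\langle B,t\rangle}$ (Lemma~\ref{pre commutator lemma}); and for the $g$-approximation you use a clever "strong convergence against a factor in $(\mathcal L_{d,\infty})_0$" observation to kill the third term. That last observation is correct and attractive, though strictly more than is needed: the paper simply discards the analogous term $\pi_2(g-h)\cdot[\pi_1(f),(1-\Delta)^{-\frac12}]$ because it already lies in $(\mathcal L_{d,\infty})_0$ by the ideal property, and then applies Lemma~\ref{bs sep lemma}, which avoids the strong-convergence lemma entirely. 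The paper also decomposes differently (subtracting a smooth $h$ from $g$ inside the \emph{original} commutator rather than inside the reduced one), but these are stylistic differences.

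The gap: every step where you invoke "Cwikel--Solomyak" with $L_d$-control fails when $d=2$. The two-sided Cwikel estimate $\|M_fh(-i\nabla)\|_{p,\infty}\lesssim\|f\|_p\|h\|_{p,\infty}$ is \emph{false} at $p=2$, which is exactly the case $d=2$; this affects both your $f$-approximation (the claim $\|M_{f_n-f}(1-\Delta)^{-\frac12}\|_{2,\infty}\lesssim\|f_n-f\|_2$ is not a theorem) and your $g$-approximation (the claim $\|M_fh_{g_n-g}(-i\nabla)\|_{2,\infty}\lesssim\|f\|_2\|g_n-g\|_{L_2(\mathbb S^1)}$ is not a theorem). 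The paper is careful here: for $d=2$ it replaces the $L_2$ norm on $f$ by the Orlicz $L_2\log L$ norm via Solomyak's Theorem~\ref{solomyak thm}, and bounds the relevant limsup by $c\|f\|_2\|g\|_4$ via the factorization trick in Lemma~\ref{d=2 useful limsup} (write $f=f_1f_2$, insert $(1-\Delta)^{-\frac14}\cdot(1-\Delta)^{-\frac14}$, use Cwikel in $\mathcal L_{4,\infty}$ where it is valid). Your approximating sequences do converge in $L_2\log L$ and $L_4(\mathbb S^1)$ respectively, so the argument can be repaired, but the repair must be spelled out — as written your $d=2$ case rests on an estimate that fails.
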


Our second result is the analogue on the torus
$$
\mathbb T^d = (\mathbb R/2\pi\mathbb Z)^d \,.
$$
In the following, we slightly abuse the notation and denote
$$(-\Delta_{\mathbb{T}^d})^{-\frac12}\stackrel{def}{=}(-\Delta_{\mathbb{T}^d})^{-\frac12}\cdot (1-P),$$
where $P:L_2(\mathbb{T}^d)\to L_2(\mathbb{T}^d)$ is the orthogonal projection onto the subspace of constants. In particular, $-i\nabla_{\mathbb T^d}(-\Delta_{\mathbb T^d})^{-\frac12}$ is defined to vanish on constants.

\begin{thm}\label{toric commutator theorem}  Let $f\in L_{\infty}(\mathbb{T}^d)$ and let $g\in C(\mathbb{S}^{d-1}).$ We have
$$[M_f(1-\Delta)^{-\frac12},g\big(\frac{-i\nabla_{\mathbb{T}^d}}{\sqrt{-\Delta_{\mathbb{T}^d}}}\big)]\in(\mathcal{L}_{d,\infty})_0,\quad [M_f,g\big(\frac{-i\nabla_{\mathbb{T}^d}}{\sqrt{-\Delta_{\mathbb{T}^d}}}\big)(1-\Delta)^{-\frac12}]\in(\mathcal{L}_{d,\infty})_0.$$
\end{thm}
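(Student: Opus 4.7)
The plan is to reduce the toric statement to its Euclidean counterpart, Theorem~\ref{commutator theorem}, via localisation and Poisson summation. Set $\Lambda := (1-\Delta_{\mathbb{T}^d})^{-\frac12}$ and $G := g\bigl(-i\nabla_{\mathbb{T}^d}/\sqrt{-\Delta_{\mathbb{T}^d}}\bigr)$. Since $\Lambda$ and $G$ are both Fourier multipliers on $\mathbb{T}^d$ they commute, yielding
\[
[M_f\Lambda, G] = [M_f, G]\Lambda,\qquad [M_f, G\Lambda] = [M_f,G]\Lambda + G[M_f,\Lambda].
\]
So both claims follow once I show that $[M_f, G]\Lambda$ and $[M_f,\Lambda]$ belong to $(\mathcal{L}_{d,\infty})_0$. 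The two arguments are parallel; the Euclidean input for the second is Theorem~\ref{commutator theorem} applied with $g\equiv 1$, which gives $[M_{\tilde f},(1-\Delta_{\mathbb R^d})^{-\frac12}]\in(\mathcal L_{d,\infty})_0$. I therefore focus on the first.

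Since $\Lambda\in\mathcal L_{d,\infty}$ (its Fourier eigenvalues $(1+|n|^2)^{-\frac12}$, $n\in\mathbb Z^d$, give the correct Weyl decay), the mapping $g\mapsto[M_f,G]\Lambda$ is bounded from $C(\mathbb S^{d-1})$ into $\mathcal L_{d,\infty}$. As $(\mathcal L_{d,\infty})_0$ is closed and $C^\infty(\mathbb S^{d-1})$ is dense in $C(\mathbb S^{d-1})$, I may assume $g\in C^\infty(\mathbb S^{d-1})$. Next, pick a smooth partition of unity $1=\sum_i\psi_i$ on $\mathbb{T}^d$ such that each $\mathrm{supp}\,\psi_i$ is contained in an open set $U_i$ which lifts diffeomorphically into a fundamental domain $V\subset\mathbb R^d$. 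By linearity, I reduce to the case that $f$ is supported in a single such $U\subset\mathbb{T}^d$, identified with $\tilde U\subset V$.

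Pick $\chi\in C_c^\infty(V)$ with $\chi\equiv 1$ on $\mathrm{supp}\,f$, so that $M_f=M_\chi M_f = M_f M_\chi$. A direct algebraic expansion yields
\[
[M_f,G]\Lambda = M_\chi[M_f,G]M_\chi\Lambda - M_f[G,M_\chi]\Lambda - [M_\chi,G]M_f\Lambda.
\]
For smooth $\chi$ and $g$, the commutator $[G,M_\chi]$ is a classical pseudodifferential operator of order $-1$ on the compact manifold $\mathbb{T}^d$, hence in $\mathcal L_{d,\infty}$; composed with $\Lambda\in\mathcal L_{d,\infty}$ and bounded operators, the last two terms lie in $\mathcal L_{d/2,\infty}\subset(\mathcal L_{d,\infty})_0$. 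For the main term, Poisson summation guarantees that, for $\chi_1,\chi_2\in C_c^\infty(V)$, the differences $M_{\chi_1}(G_{\mathbb{T}^d}-G_{\mathbb R^d})M_{\chi_2}$ and $M_{\chi_1}(\Lambda_{\mathbb{T}^d}-\Lambda_{\mathbb R^d})M_{\chi_2}$ are smoothing operators (read through the natural isometry $L_2(V)\hookrightarrow L_2(\mathbb R^d)$), because the nontrivial lattice translates $2\pi k\neq 0$ keep $\mathrm{supp}\,\chi_1-\mathrm{supp}\,\chi_2$ away from the singular support of the Euclidean kernel. Lifting $f$ to $\tilde f\in L_\infty(\mathbb R^d)$ compactly supported in $\tilde U$, this gives
\[
M_\chi[M_f,G_{\mathbb{T}^d}]M_\chi\Lambda_{\mathbb{T}^d} \equiv M_\chi[M_{\tilde f},G_{\mathbb R^d}]M_\chi\Lambda_{\mathbb R^d} \pmod{(\mathcal L_{d,\infty})_0}.
\]
Finally, $[M_\chi,\Lambda_{\mathbb R^d}]\in\mathcal L_{d/2,\infty}\subset(\mathcal L_{d,\infty})_0$ lets me rewrite the right-hand side modulo $(\mathcal L_{d,\infty})_0$ as $M_\chi\,[M_{\tilde f}(1-\Delta_{\mathbb R^d})^{-\frac12},G_{\mathbb R^d}]\,M_\chi$, which lies in $(\mathcal L_{d,\infty})_0$ by Theorem~\ref{commutator theorem}.

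The main obstacle is the rigorous execution of the Poisson-summation comparison between torus and Euclidean operators, together with the careful bookkeeping of every remainder so that it lands in the smaller ideal $(\mathcal L_{d,\infty})_0$ rather than the larger $\mathcal L_{d,\infty}$. What makes this feasible is that, after the two reductions above, any non-smoothness is confined to the bounded multiplier $M_f$, while every Fourier-multiplier object ($G,\Lambda$) and every cutoff $\chi$ is smooth, so each comparison error is genuinely a smoothing (in fact trace class) operator.
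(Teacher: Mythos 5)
Your reduction of the toric statement to the Euclidean Theorem~\ref{commutator theorem} via localisation and Poisson summation is a genuinely different route from what the paper intends: the paper states that Theorem~\ref{toric commutator theorem} is proved by carrying out the Euclidean argument again directly on the torus, where it is simpler (the Fourier variable is a discrete lattice so there is no singularity of $g(\xi/|\xi|)$ at zero frequency, the whole torus is compact so the support restrictions and cutoffs in Lemmas~\ref{d>2 useful cwikel}, \ref{d=2 useful limsup}, \ref{first commutator lemma}, \ref{pre commutator lemma} disappear). Your strategy is attractive in that it recycles Theorem~\ref{commutator theorem} rather than re-proving it, and the preliminary reductions (commuting $\Lambda$ past $G$, density in $g$, partition of unity in $x$, the $\chi$-cutoff decomposition) are all sound --- modulo a harmless sign: with $M_\chi M_f = M_f M_\chi = M_f$ the correct identity is $[M_f,G] = M_\chi[M_f,G]M_\chi - M_f[G,M_\chi] + [M_\chi,G]M_f$, with a plus on the last term, which does not affect ideal membership.

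The genuine gap is the Poisson-summation step. The claim that $M_{\chi_1}(G_{\mathbb T^d}-G_{\mathbb R^d})M_{\chi_2}$ is a smoothing operator rests on the identity $K_{\mathbb T^d}(z)-K_{\mathbb R^d}(z)=\sum_{k\ne 0}K_{\mathbb R^d}(z-2\pi k)$ together with the observation that the nonzero lattice translates avoid the diagonal singularity. But for a degree-$0$ homogeneous symbol $g(\xi/|\xi|)$, the Euclidean convolution kernel $K_{\mathbb R^d}$ is homogeneous of degree $-d$, so $|K_{\mathbb R^d}(z-2\pi k)|\sim |k|^{-d}$; summing over the $\sim R^{d-1}$ lattice points with $|k|\sim R$ contributes $\sim R^{-1}$ per dyadic shell, and the series diverges logarithmically in absolute value. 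Whatever convergence one has relies on the cancellation built into the mean-zero Calder\'on--Zygmund kernel, and this requires a separate argument which you neither supply nor cite. It is telling that the paper confronts precisely this torus-versus-plane comparison in Lemma~\ref{homegeneous compute lemma}, but there the symbol is $g(\mathbf t/|\mathbf t|)|\mathbf t|^{-2}$ (two extra orders of decay), so the Euclidean kernel is homogeneous of degree $2-d$, locally integrable, and the lattice sum converges absolutely; for the degree-$0$ multiplier $G$ that you need, the estimate is strictly harder, not a routine corollary. Your own concluding remark that the Poisson-summation comparison is ``the main obstacle'' is accurate: as written, that step is asserted rather than proved, and filling it appears to require work comparable to --- or harder than --- simply re-running the Euclidean proof on the torus, which is what the paper does.
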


We only prove Theorem \ref{commutator theorem}. The proof of Theorem \ref{toric commutator theorem} is similar (but simpler in many aspects) and is, therefore, omitted.

Let us briefly discuss these results. Their main point is that they express a cancellation coming from the commutator. Indeed, the individual operators $\pi_1(f)(1-\Delta)^{-\frac12}\pi_2(g) = \pi_1(f)\pi_2(g)(1-\Delta)^{-\frac12}$ and $\pi_2(g)\pi_1(f)(1-\Delta)^{-\frac12}$, $\pi_2(g)(1-\Delta)^{-\frac12}\pi_1(f)$ belong\footnote{In the setting of torus, this is obvious. In the setting of Euclidean space the shortest (though not the easiest) way to see this is to use Theorem \ref{solomyak thm}. In Euclidean setting, it is of crucial importance that $f$ is compactly supported --- without this condition the operator $\pi_1(f)\pi_2(g)(1-\Delta)^{-\frac12}$ is not even compact (take $f\equiv 1$ and $g\equiv 1$).} to $\mathcal L_{d,\infty},$ but not to $(\mathcal L_{d,\infty})_0$ (unless $f\equiv 0$ or $g\equiv 0$).  According to Theorem \ref{commutator theorem}, when taking their difference, there is a significant cancellation and the resulting operator does belong to $(\mathcal L_{d,\infty})_0$.

We also remark that with slightly more effort one can show that the assertion remains valid under weaker conditions on $f$ and $g$. The above version, however, is sufficient for our purposes. What is important in our applications of this theorem is that no continuity of $f$ and $g$ is required (except for that of $g$ in Theorem \ref{toric commutator theorem}, which is needed to define the operator).


\subsection{Some trace ideal bounds}

\begin{lem}\label{d>2 useful cwikel} Let $d>2$. For $f\in L_\infty(\mathbb R^d)$ with compact support and $g\in L_\infty(\mathbb S^{d-1})$ we have
$$\|\pi_1(f)\pi_2(g)(-\Delta)^{-\frac12}\|_{d,\infty}\leq C_d\|f\|_d\|g\|_d.$$
\end{lem}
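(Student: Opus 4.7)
The plan is to rewrite the composition $\pi_2(g)(-\Delta)^{-1/2}$ as a single homogeneous Fourier multiplier and then invoke the (weak‑type) Cwikel estimate for $M_f\cdot m(D)$, which is exactly the regime where the restriction $d>2$ enters.

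\emph{Step 1: identify the symbol.} By the definition of $\pi_2$, for any $\xi\in L_2(\mathbb{R}^d)$ one has
$$\mathcal{F}\bigl(\pi_2(g)(-\Delta)^{-1/2}\xi\bigr)(s)=\frac{g(s/|s|)}{|s|}\,(\mathcal{F}\xi)(s),\qquad s\in\mathbb{R}^d\setminus\{0\}.$$
Thus $\pi_2(g)(-\Delta)^{-1/2}=h(D)$ with $h(s):=g(s/|s|)|s|^{-1}$, and the operator in question is the standard ``$f(X)\,h(D)$'' Cwikel‑type product.

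\emph{Step 2: compute $\|h\|_{d,\infty}$.} Passing to polar coordinates $s=r\sigma$ and changing variables in $r$ gives, for every $\lambda>0$,
$$|\{s\in\mathbb{R}^d:|h(s)|>\lambda\}|=\int_{\mathbb{S}^{d-1}}\!\!\int_0^{|g(\sigma)|/\lambda}r^{d-1}\,dr\,d\sigma=\frac{\|g\|_{L_d(\mathbb{S}^{d-1})}^{\,d}}{d\,\lambda^{d}},$$
so $\mu(t,h)=(dt)^{-1/d}\|g\|_{L_d(\mathbb{S}^{d-1})}$ and hence $h\in L_{d,\infty}(\mathbb{R}^d)$ with
$$\|h\|_{L_{d,\infty}(\mathbb{R}^d)}=d^{-1/d}\,\|g\|_{L_d(\mathbb{S}^{d-1})}.$$

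\emph{Step 3: apply a weak‑type Cwikel estimate.} For $p>2$ one has the Cwikel/Solomyak weak‑type bound
$$\|M_{\varphi}\,m(D)\|_{\mathcal{L}_{p,\infty}}\leq C_p\|\varphi\|_{L_p(\mathbb{R}^d)}\|m\|_{L_{p,\infty}(\mathbb{R}^d)},$$
valid for all $\varphi\in L_p(\mathbb{R}^d)$ and $m\in L_{p,\infty}(\mathbb{R}^d)$. Since $f\in L_\infty(\mathbb{R}^d)$ is compactly supported, $f\in L_d(\mathbb{R}^d)$ with $\|f\|_d<\infty$. Applying this bound with $p=d>2$, $\varphi=f$ and $m=h$, and combining with Step 2, yields
$$\|\pi_1(f)\pi_2(g)(-\Delta)^{-1/2}\|_{d,\infty}=\|M_f\,h(D)\|_{d,\infty}\leq C_d\,d^{-1/d}\,\|f\|_{d}\,\|g\|_{d},$$
which is the claimed estimate (absorbing the numerical factor into $C_d$).

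The only genuine obstacle is the use of the weak‑type Cwikel estimate itself; the constraint $d>2$ in the hypothesis of the lemma is precisely the constraint required for this Cwikel‑type bound to hold, and the computation of $\|h\|_{d,\infty}$ is the one place where the specific homogeneous structure of the multiplier $g(s/|s|)/|s|$ is used to convert an $L_d$‑norm on the sphere into a weak‑$L_d$ norm on $\mathbb{R}^d$.
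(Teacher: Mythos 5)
Your proof is correct and is essentially the paper's own argument: identify $\pi_2(g)(-\Delta)^{-1/2}$ as the Fourier multiplier $h(D)$ with $h(s)=g(s/|s|)|s|^{-1}$, compute $\|h\|_{d,\infty}=d^{-1/d}\|g\|_{L_d(\mathbb S^{d-1})}$ by spherical coordinates, and invoke the weak-type Cwikel estimate valid for $p>2$. The paper states the same three steps (citing \cite[Theorem 4.2]{Simon-book}, \cite{Cwikel}, \cite{LeSZ} for the Cwikel bound), merely without writing out the explicit constant.
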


\begin{proof} Set
$$h(s)=g(\frac{s}{|s|})|s|^{-1},\quad s\in\mathbb{R}^d.$$
We have
$$\pi_1(f)\pi_2(g)(-\Delta)^{-\frac12}=M_fh(-i\nabla).$$
By Cwikel's estimate in $\mathcal{L}_{d,\infty},$ $d>2,$ (stated in \cite[Theorem 4.2]{Simon-book}; see also \cite{Cwikel} and a more general assertion in \cite{LeSZ}) we have
$$\|\pi_1(f)\pi_2(g)(-\Delta)^{-\frac12}\|_{d,\infty}\leq c_d\|f\|_d\|h\|_{d,\infty}.$$
Passing to spherical coordinates, we infer that
$$\|h\|_{d,\infty}=c_d'\|g\|_d.$$
The assertion follows by combining the two last equations.
\end{proof}

The following theorem is a restatement of the result of Solomyak (see \cite[Theorem 3.1]{Solomyak}) for $d=2.$

\begin{thm}\label{solomyak thm} If $f\in (L_2\log L)(\mathbb{R}^2)$ is supported on a compact set $K,$ then
	$$\|M_f(1-\Delta)^{-\frac12}\|_{2,\infty}\leq c(K)\|f\|_{L_2\log L}.$$
\end{thm}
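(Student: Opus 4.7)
The plan is to recognise $M_f(1-\Delta)^{-\frac12}$ as an operator of the form $M_f h(D)$ with the radial symbol $h(\xi)=(1+|\xi|^2)^{-\frac12}$, and then to invoke Solomyak's critical Cwikel-type inequality in $\mathbb R^2$, which is exactly the content of Theorem 3.1 of \cite{Solomyak}. First I would verify that $h\in L_{2,\infty}(\mathbb R^2)$: since $\{\xi:h(\xi)>\lambda\}$ has Lebesgue measure $\pi(\lambda^{-2}-1)$ for $0<\lambda<1$, the decreasing rearrangement satisfies $\mu(t,h)=(1+t/\pi)^{-\frac12}$, so $\|h\|_{L_{2,\infty}}=\sqrt\pi$. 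This places the symbol in precisely the weak-$L_2$ endpoint where naive Cwikel fails and the Orlicz sharpening is required.

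Next I would apply Solomyak's bound in the form
$$\|M_f h(D)\|_{\mathcal L_{2,\infty}}\leq C\,\|h\|_{L_{2,\infty}}\,\|f\|_{L_2\log L}\,,$$
valid whenever $f$ is supported in a fixed compact set $K$, with $C$ depending on $K$. Since for our specific $h$ the factor $\|h\|_{L_{2,\infty}}$ is an absolute constant, this immediately yields the claimed inequality $\|M_f(1-\Delta)^{-\frac12}\|_{2,\infty}\le c(K)\|f\|_{L_2\log L}$.

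To justify the underlying estimate (rather than merely quoting it), I would follow the Birman--Solomyak dyadic strategy. The Schwartz kernel of $M_f(1-\Delta)^{-\frac12}$ is $f(x)G_1(x-y)$, where the Bessel potential $G_1$ satisfies $G_1(z)\asymp|z|^{-1}$ near the origin and decays exponentially at infinity. One splits $f$ according to its level sets $E_k=\{2^k\le |f|<2^{k+1}\}$, so that $|f_k|\le 2^{k+1}\chi_{E_k}$ and $|E_k|\leq 2^{-2k}\|f\|_2^2$; on each piece one estimates $\|\chi_{E_k\cap K}(1-\Delta)^{-\frac12}\|_{2,\infty}$ by a truncated Hilbert--Schmidt calculation localised to dyadic annuli $\{|x-y|\sim 2^{-j}\}$, each annulus contributing an $O(1)$ Hilbert--Schmidt mass. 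Summing the dyadic contributions gives a bound involving $\sum_k 2^k\sqrt{|E_k|(1+\log(|K|/|E_k|))}$, which rearranges precisely into the Orlicz norm $\|f\|_{L_2\log L}$. The compactness of $\mathrm{supp}(f)\subset K$ is essential to truncate at large distances and absorb the exponential tail of $G_1$; this is where the $K$-dependence of the constant enters.

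The main obstacle is the critical-dimension logarithmic divergence: in $\mathbb R^2$ the symbol $(1+|\xi|^2)^{-\frac12}$ lies in $L_{2,\infty}$ but not in $L_2$, so a naive $\|M_f(1-\Delta)^{-\frac12}\|_{\mathcal L_2}^2=\|f\|_2^2\|G_1\|_2^2$ argument diverges. The whole point of the $\log L$ factor in the Orlicz space is to absorb exactly this logarithm, and keeping careful track of the interplay between the dyadic level-set decomposition of $f$ and the dyadic diagonal decomposition of the kernel is the delicate bookkeeping step, which is why we simply defer the full argument to \cite{Solomyak}.
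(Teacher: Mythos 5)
Your approach matches the paper's: the paper offers no proof at all, stating only that the theorem ``is a restatement of the result of Solomyak [Theorem~3.1]'' for $d=2$, and you likewise reduce the claim to that reference. One caveat worth flagging: the displayed bound $\|M_f h(D)\|_{2,\infty}\leq C\|h\|_{L_{2,\infty}}\|f\|_{L_2\log L}$ for \emph{arbitrary} $h\in L_{2,\infty}$ is not what Solomyak's Theorem~3.1 literally says --- his theorem is about the specific Bessel/Riesz potential, not a general Fourier multiplier with a separated $\|h\|_{L_{2,\infty}}$ factor --- so for a clean citation one should apply it directly to $\|M_{|f|}(1-\Delta)^{-1}M_{|f|}\|_{1,\infty}\leq c(K)\,\||f|^2\|_{L\log L}$ and then observe $\|M_f(1-\Delta)^{-\frac12}\|_{2,\infty}^2=\|M_{|f|}(1-\Delta)^{-1}M_{|f|}\|_{1,\infty}$ together with $\||f|^2\|_{L\log L}\lesssim\|f\|_{L_2\log L}^2$.
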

Here, $L_2\log L$ is a common shorthand for the Orlicz space $L_M$ with $M(t)=t^2\log(e+t),$ $t>0.$

The next lemma is a substitute for Lemma \ref{d>2 useful cwikel} for the case $d=2.$

\begin{lem}\label{d=2 useful limsup} Let $d=2.$ For $f\in C^\infty_c(\mathbb{R}^2)$ and $g\in L_\infty(\mathbb{S}^1)$ we have
$$\limsup_{t\to\infty}t^{\frac12}\mu(t,\pi_1(f)\pi_2(g)(1-\Delta)^{-\frac12})\leq c_{{\rm abs}}\|f\|_2\|g\|_4.$$
\end{lem}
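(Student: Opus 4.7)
Since $\pi_2(g)$ commutes with $(1-\Delta)^{-1/2}$, we can rewrite the operator as $M_f\phi(-i\nabla)$, where $\phi(\xi) := g(\xi/|\xi|)(1+|\xi|^2)^{-1/2}$. A direct computation in polar coordinates gives the distribution-function identity
\[
\lambda^2\,m\bigl(\{|\phi|>\lambda\}\bigr) = \tfrac12\int_{\mathbb S^1}(|g|^2-\lambda^2)_+\,d\theta,
\]
whose supremum over $\lambda>0$ is $\tfrac12\|g\|_{L_2(\mathbb S^1)}^2$, so $\phi\in L_{2,\infty}(\mathbb R^2)$ with $\|\phi\|_{L_{2,\infty}(\mathbb R^2)}^2 = \tfrac12\|g\|_{L_2(\mathbb S^1)}^2$. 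Combined with the H\"older embedding $\|g\|_{L_2(\mathbb S^1)} \leq (2\pi)^{1/4}\|g\|_{L_4(\mathbb S^1)}$, the lemma reduces to proving the asymptotic two-dimensional $\mathcal L_{2,\infty}$-Cwikel-type estimate
\[
\limsup_{t \to \infty} t^{1/2}\,\mu(t,M_f\phi(-i\nabla)) \leq c\,\|f\|_{L_2(\mathbb R^2)}\,\|\phi\|_{L_{2,\infty}(\mathbb R^2)}
\]
for compactly supported $f\in L_2(\mathbb R^2)$ and $\phi\in L_{2,\infty}(\mathbb R^2)$.

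To establish this asymptotic estimate, I plan to proceed by a dyadic frequency decomposition $\phi = \sum_{k\geq 0}\phi_k$, where $\phi_k = \phi\cdot\chi_{A_k}$ on annuli $A_k = \{2^{k-1}\leq|\xi|<2^k\}$ (with $A_0 = \{|\xi|<1\}$). Each piece $\phi_k$ is bounded and compactly supported in frequency, so $M_f\phi_k(-i\nabla)$ is Hilbert--Schmidt (indeed trace class) by Kato--Seiler--Simon, with norm controlled by $\|f\|_2\|\phi_k\|_2$. Since the $\phi_k$ have disjoint frequency supports, the Fourier multipliers $\phi_k(-i\nabla)$ are pairwise orthogonal, which (together with compactness of $\mathrm{supp}\,f$) makes the operators $M_f\phi_k(-i\nabla)$ near-orthogonal; their singular-value asymptotics combine via Lemma \ref{bs pairwise orthogonal lemma}. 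To handle the tail sum one truncates at some level $K$, bounds the tail $\sum_{k>K}M_f\phi_k(-i\nabla)$ via Solomyak's Theorem \ref{solomyak thm}, and lets $K\to\infty$; the $L_2\log L$-correction produced by Solomyak accumulates only logarithmically in $K$ and is absorbed into the $\limsup$. Applying this to our specific $\phi$ then gives the lemma with $c_{\rm abs} = c\cdot 2^{-1/2}(2\pi)^{1/4}$.

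The main obstacle is precisely the asymptotic Cwikel estimate: its \emph{non-asymptotic} counterpart fails in dimension 2, which is exactly the reason Solomyak's theorem requires an $L_2\log L$ norm of $f$ rather than $\|f\|_2$. The delicate point is to show that this logarithmic factor \emph{disappears} in the $\limsup$, by controlling the way the finitely many low-frequency dyadic contributions (where Solomyak's correction is harmless because the symbols are bounded) combine with the high-frequency tail (where Solomyak's correction dominates but enters only as a vanishing remainder after dyadic summation). Once this analytic point is settled, the reduction in the first paragraph immediately gives the stated bound.
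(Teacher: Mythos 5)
The route you propose is genuinely different from the paper's, and it has a real gap. The paper never attempts the endpoint Cwikel-type estimate
\[
\limsup_{t\to\infty} t^{1/2}\mu\bigl(t,M_f\phi(-i\nabla)\bigr) \leq c\,\|f\|_{L_2}\|\phi\|_{L_{2,\infty}}
\]
to which you reduce the lemma; in dimension $d=2$ with $p=d=2$ this is precisely the endpoint at which Cwikel's inequality fails, and the correct quasi-norm on the $f$ side (per Solomyak, Theorem~\ref{solomyak thm}) is $\|f\|_{L_2\log L}$, not $\|f\|_{L_2}$. You acknowledge that the logarithmic correction is the "main obstacle," but you do not resolve it; the proposal as written is therefore incomplete exactly at the point where all the work lies.

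Beyond the unresolved obstacle, the dyadic argument sketched for that estimate contains a specific error. With $T_k=M_f\phi_k(-i\nabla)$ you cannot apply Lemma~\ref{bs pairwise orthogonal lemma}: pairwise orthogonality in the sense of that lemma requires $T_kT_l=T_k^{\ast}T_l=0$ for $k\neq l$, but $T_k^{\ast}T_l=\overline{\phi_k}(-i\nabla)\,M_{|f|^2}\,\phi_l(-i\nabla)$ and $T_kT_l=M_f\phi_k(-i\nabla)M_f\phi_l(-i\nabla)$ are both nonzero in general, because the multiplication operator $M_f$ (respectively $M_{|f|^2}$) sitting between the two Fourier multipliers destroys the disjointness of the frequency supports. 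There is no "near-orthogonality" lemma in the paper, and in fact assembling the singular values of the $T_k$ requires genuine estimates on those cross terms, which are of the same nature as the estimate you are trying to prove.

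The paper's actual proof sidesteps the $p=d$ endpoint entirely. It factors $f=f_1f_2$ with $f_1,f_2\in C^\infty_c(\mathbb R^2)$ and $\|f_1\|_4\|f_2\|_4\lesssim\|f\|_2$, and then writes
\[
\pi_1(f)\pi_2(g)(1-\Delta)^{-\frac12}
=\pi_1(f_1)(1-\Delta)^{-\frac14}\cdot\pi_1(f_2)(1-\Delta)^{-\frac14}\pi_2(g)
+\pi_1(f_1)\Bigl(\pi_1(f_2)(1-\Delta)^{-\frac12}-(1-\Delta)^{-\frac14}\pi_1(f_2)(1-\Delta)^{-\frac14}\Bigr)\pi_2(g),
\]
where the bracketed commutator-type term lies in $(\mathcal L_{2,\infty})_0$ by the result of \cite{MSX}, hence does not affect the $\limsup$ (Lemma~\ref{bs sep lemma}). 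Each remaining factor is then estimated in $\mathcal L_{4,\infty}$ by Cwikel (here $p=4>d=2$, so no endpoint failure), and one closes with H\"older. If you want a proof in the spirit of your reduction you would still need to know, and prove, that the logarithmic loss present in Solomyak's bound vanishes in the asymptotic; that is a nontrivial claim that your sketch does not establish, and it is much more economical to borrow the smoothness of $f$ and split it as the paper does.
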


\begin{proof} There is a decomposition\footnote{Let $G({\bf t})=e^{-|{\bf t}|^2},$ ${\bf t}\in\mathbb{R}^2.$ Let $\phi\in C^{\infty}_c(\mathbb{R}^2)$ be such that $\|\phi\|_{\infty}=1$ and $\phi=1$ on ${\rm supp}(f).$ Setting $f_1=f(f^2+\|f\|_2^2 G^2)^{-\frac14}$ and $f_2=\phi(f^2+\|f\|_2^2 G^2)^{\frac14},$ we obtain the required decomposition.} $f=f_1f_2$ with $f_1,f_2\in C^{\infty}_c(\mathbb{R}^2)$ and such that $\|f_1\|_4\|f_2\|_4\leq c_{{\rm abs}}\|f\|_2.$ We write
\begin{align*}
&~\quad \pi_1(f)\pi_2(g)(1-\Delta)^{-\frac12}\\&=\pi_1(f_1)\cdot \pi_1(f_2)(1-\Delta)^{-\frac12}\cdot\pi_2(g)\\
&=\pi_1(f_1)(1-\Delta)^{-\frac14}\cdot \pi_1(f_2)(1-\Delta)^{-\frac14}\pi_2(g)\\
&\qquad 
+\pi_1(f_1)\cdot \Big(\pi_1(f_2)(1-\Delta)^{-\frac12}-(1-\Delta)^{-\frac14} \pi_1(f_2)(1-\Delta)^{-\frac14}\Big)\cdot\pi_2(g).
\end{align*}
Since the bracket in the last line belongs to $(\mathcal{L}_{2,\infty})_0$ (Theorem 1.6 in \cite{MSX} taken with $\alpha=-\frac12$ and $\beta=\frac12$ yields much stronger assertion that this bracket belongs to $\mathcal{L}_{1,\infty}$), it follows from Lemma \ref{bs sep lemma} and H\"older's inequality that
\begin{align*}
&~\quad \limsup_{t\to\infty}t^{\frac12}\mu\big(t,\pi_1(f)\pi_2(g)(1-\Delta)^{-\frac12}\big)\\
&=\limsup_{t\to\infty}t^{\frac12}\mu\big(t,\pi_1(f_1)(1-\Delta)^{-\frac14}\cdot \pi_1(f_2)(1-\Delta)^{-\frac14}\pi_2(g)\big)\\
&\leq\Big\|\pi_1(f_1)(1-\Delta)^{-\frac14}\cdot \pi_1(f_2)(1-\Delta)^{-\frac14}\pi_2(g)\Big\|_{2,\infty}\\
&\leq 2^{\frac12}\Big\|\pi_1(f_1)(1-\Delta)^{-\frac14}\Big\|_{4,\infty}\Big\|\pi_1(f_2)(1-\Delta)^{-\frac14}\pi_2(g)\Big\|_{4,\infty}.
\end{align*}
By Cwikel's estimate in $\mathcal{L}_{4,\infty}$ (stated in \cite[Theorem 4.2]{Simon-book}; see also \cite{Cwikel} and a more general assertion in \cite{LeSZ}), we have
$$\Big\|\pi_1(f_1)(1-\Delta)^{-\frac14}\Big\|_{4,\infty}\leq c_{{\rm abs}}\|f_1\|_4.$$
An argument identical to that in Lemma \ref{d>2 useful cwikel} yields
$$\Big\|\pi_1(f_2)(1-\Delta)^{-\frac14}\pi_2(g)\Big\|_{4,\infty}\leq c_{{\rm abs}}\|f_2\|_4\|g\|_4.$$
A combination of the three last estimates yields the claim.
\end{proof}


\subsection{A preliminary commutator estimate}

Our goal in this subsection is to prove the commutator estimate in Lemma \ref{pre commutator lemma}. We need some preparations.

\begin{lem}\label{first commutator lemma}
If $f\in C^{\infty}_c(\mathbb{R}^d),$ then
\begin{equation}\label{fcl1}
[(1-\Delta)^{\frac12},\pi_1(f)](1-\Delta)^{-1}\in(\mathcal{L}_{d,\infty})_0.
\end{equation}
\end{lem}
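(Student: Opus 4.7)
The key observation is that $[(1-\Delta)^{1/2}, M_f]$ exhibits a hidden cancellation: although a priori the first factor has order one, the commutator has principal symbol $-i\,\tfrac{\xi}{\sqrt{1+|\xi|^2}}\!\cdot\!\nabla f$ and thus behaves as a bounded operator of order zero whose $x$-symbol is compactly supported (since $\nabla f$ is). Multiplying by $(1-\Delta)^{-1}$ should therefore produce a compactly supported operator of effective order $-2$, lying in $\mathcal L_{d/2,\infty}\subset(\mathcal L_{d,\infty})_0$. My strategy is to extract this cancellation by an algebraic manipulation and then reduce the resulting pieces to direct applications of the Cwikel- and Solomyak-type estimates of the previous subsection.

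Starting from
$
[(1-\Delta)^{1/2},M_f](1-\Delta)^{-1} = (1-\Delta)^{1/2} M_f(1-\Delta)^{-1} - M_f(1-\Delta)^{-1/2}
$
and using the elementary identity $[1-\Delta, M_f] = -M_{\Delta f} - 2\sum_j M_{\partial_j f}\,\partial_j$ to write $(1-\Delta)M_f(1-\Delta)^{-1} = M_f - (M_{\Delta f} + 2\sum_j M_{\partial_j f}\partial_j)(1-\Delta)^{-1}$, I obtain the decomposition
\begin{align*}
[(1-\Delta)^{1/2}, M_f](1-\Delta)^{-1}
 &= [(1-\Delta)^{-1/2}, M_f] \\
 &\quad - (1-\Delta)^{-1/2} M_{\Delta f}(1-\Delta)^{-1} \\
 &\quad - 2\sum_{j=1}^d (1-\Delta)^{-1/2} M_{\partial_j f}\,\partial_j(1-\Delta)^{-1}.
\end{align*}
It then suffices to show that each of the three families belongs to $(\mathcal L_{d,\infty})_0$.

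For the $M_{\Delta f}$-term I would factor $\Delta f = h_1 h_2$ with $|h_1|=|h_2|=|\Delta f|^{1/2}$ bounded and compactly supported, apply Cwikel's estimate (the $(1-\Delta)^{-1/2}$-version of Lemma \ref{d>2 useful cwikel}) for $d>2$, or Solomyak's estimate (Theorem \ref{solomyak thm}) for $d=2$, to get $M_{h_i}(1-\Delta)^{-1/2}\in\mathcal L_{d,\infty}$, and then use H\"older's inequality \eqref{holder inequality} to put $M_{\Delta f}(1-\Delta)^{-1}$ into $\mathcal L_{d/2,\infty}$ (respectively $\mathcal L_{1,\infty}$ when $d=2$); in either case this is contained in $(\mathcal L_{d,\infty})_0$, and multiplication on the left by the bounded operator $(1-\Delta)^{-1/2}$ preserves this. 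For the $M_{\partial_j f}\partial_j$-term I would use that $\partial_j(1-\Delta)^{-1} = i R_j(1-\Delta)^{-1/2}$, where $R_j := D_j(1-\Delta)^{-1/2}$ is a bounded Fourier multiplier commuting with $(1-\Delta)^{-1/2}$; the term becomes $i(1-\Delta)^{-1/2} M_{\partial_j f}(1-\Delta)^{-1/2} R_j$, to which the same square-root factoring plus Cwikel/Solomyak plus H\"older applies.

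The delicate piece is $[(1-\Delta)^{-1/2}, M_f]$, which I would handle by bootstrapping the strategy. Using the integral representation $(1-\Delta)^{-1/2} = \pi^{-1}\int_0^\infty \lambda^{-1/2}(\lambda+1-\Delta)^{-1}\,d\lambda$ and $[A,B]\!-$formula for resolvents, one has
$$
[(1-\Delta)^{-1/2}, M_f] = \frac{1}{\pi}\int_0^\infty \lambda^{-1/2}(\lambda+1-\Delta)^{-1}\Bigl(M_{\Delta f} + 2\sum_j M_{\partial_j f}\partial_j\Bigr)(\lambda+1-\Delta)^{-1}\,d\lambda.
$$
I would estimate the $\mathcal L_{d/2,\infty}$-quasinorm of each operator in the integrand by Cwikel/Solomyak applied to the rescaled symbol $(\lambda+1+|\xi|^2)^{-1}$ (whose weak-$L_d$ norm decays like $(\lambda+1)^{-1/2}$), and then integrate these $\lambda$-dependent bounds against $\lambda^{-1/2}d\lambda$. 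The main obstacle I anticipate is establishing this $\lambda$-integrability uniformly, especially for the $\partial_j$-piece which raises the effective order by one and whose Schatten bounds come with a weaker $(\lambda+1)^{-1/2}$-decay, and most acutely at the endpoint dimension $d=2$, where Cwikel's $\mathcal L_{d,\infty}$-bound is unavailable and one must instead invoke Solomyak's $L_2\log L$-estimate (Theorem \ref{solomyak thm}) and carefully absorb the resulting logarithmic factors.
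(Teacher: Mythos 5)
Your decomposition
\begin{align*}
[(1-\Delta)^{\frac12},M_f](1-\Delta)^{-1}
 &= [(1-\Delta)^{-\frac12},M_f] - (1-\Delta)^{-\frac12}M_{\Delta f}(1-\Delta)^{-1} \\
 &\quad- 2\sum_{j}(1-\Delta)^{-\frac12}M_{\partial_j f}\partial_j(1-\Delta)^{-1}
\end{align*}
is algebraically correct, and the resolvent-formula route is genuinely different from the paper's: the paper uses the double operator integral $T^{(1-\Delta)^{1/2}}_\phi$ with $\phi(\lambda,\mu)=\lambda^{1/2}\mu^{1/2}/(\lambda+\mu)$, whose boundedness on $(\mathcal L_{d,\infty})_0$ is quoted as a black box, and then reduces to a Kato--Seiler--Simon estimate. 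Your approach replaces the DOI machinery by the Balakrishnan resolvent integral $(1-\Delta)^{-1/2}=\pi^{-1}\int_0^\infty\lambda^{-1/2}(\lambda+1-\Delta)^{-1}\,d\lambda$, which is more elementary and self-contained. Both encode the same cancellation.

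There is, however, a genuine gap where you yourself flag uncertainty: the $\lambda$-integrability is not established, and the decay rate you state would not suffice. You say the integrand should be controlled by the weak-$L_d$ norm of $(\lambda+1+|\xi|^2)^{-1}$, which decays like $(\lambda+1)^{-1/2}$; but $\int_0^\infty\lambda^{-1/2}(\lambda+1)^{-1/2}\,d\lambda$ diverges at infinity, so that rate is not good enough. The resolution is that the correct decay is strictly better once the worst term is factored appropriately. Write
$$(\lambda+1-\Delta)^{-1}M_{\partial_j f}\,\partial_j(\lambda+1-\Delta)^{-1}
= \bigl[(\lambda+1-\Delta)^{-1}M_{\partial_j f}\bigr]\cdot\bigl[\partial_j(\lambda+1-\Delta)^{-1}\bigr].$$
The second factor is a Fourier multiplier with symbol $i\xi_j(\lambda+1+|\xi|^2)^{-1}$, hence operator norm $\leq\tfrac12(\lambda+1)^{-1/2}$; the first factor has, by Kato--Seiler--Simon for $p=d\geq 2$, Schatten $\mathcal L_d$-norm $\lesssim\|\partial_j f\|_d(\lambda+1)^{-1/2}$ since $\|(\lambda+1+|\xi|^2)^{-1}\|_{L_d(\mathbb R^d)}\sim(\lambda+1)^{-1/2}$. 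The product therefore decays like $(\lambda+1)^{-1}$, and $\int_0^\infty\lambda^{-1/2}(\lambda+1)^{-1}\,d\lambda$ converges. The $M_{\Delta f}$-term decays even faster, like $(\lambda+1)^{-3/2}$. So the commutator $[(1-\Delta)^{-1/2},M_f]$ lands in $\mathcal L_d\subset(\mathcal L_{d,\infty})_0$.

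A secondary point: you aim for $\mathcal L_{d/2,\infty}$-bounds throughout, which is both unnecessary and technically awkward. Unnecessary, because $\mathcal L_d\subset(\mathcal L_{d,\infty})_0$ already suffices, and KSS gives $\mathcal L_d$ for all $d\geq 2$ without any need for the endpoint Cwikel estimate. Awkward, because $\mathcal L_{d/2,\infty}$ is not a Banach space (for $d=2$ it is the non-normable $\mathcal L_{1,\infty}$), so the plan to bound the quasinorm of the integrand and then integrate against $d\lambda$ does not go through as a Bochner-integral argument; one has to work in an honest Banach space. Working in $\mathcal L_d$ via KSS avoids both problems and also makes the $d=2$ case no different from $d>2$, so the elaborate Solomyak/$L_2\log L$ detour you anticipate for $d=2$ is not needed here.
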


Note that, similarly to the discussion after Theorems \ref{commutator theorem} and \ref{toric commutator theorem}, the main point about this lemma is that it captures a cancellation. Indeed, both operators $\pi_1(f)(1-\Delta)^{-\frac12}$ and $(1-\Delta)^{\frac12}\pi_1(f)(1-\Delta)^{-1}$ belong to $\mathcal L_{d,\infty},$ but not to $(\mathcal L_{d,\infty})_0$ (unless $f=0$).

\begin{proof} The proof of this lemma requires some tools from the theory of Double Operator Integrals, for which we refer the reader to \cite{CPSZ-AJM,CSZ-Fourier,CPSZ-JOT,DDSZ}. The symbol $T^A_{\phi}$ denotes the double operator integral, based on the operator $A$, with the symbol $\phi$ as defined in these papers.
	
Let
$$\phi(\lambda,\mu)=\frac{\lambda^{\frac12}\mu^{\frac12}}{\lambda+\mu},\quad \lambda,\mu>0.$$
Note that
$$[(1-\Delta)^{\frac12},\pi_1(f)]=T^{(1-\Delta)^{\frac12}}_{\phi}\big((1-\Delta)^{-\frac14}[1-\Delta,\pi_1(f)](1-\Delta)^{-\frac14}\big).$$	
Thus,
$$[(1-\Delta)^{\frac12},\pi_1(f)](1-\Delta)^{-1}=T^{(1-\Delta)^{\frac12}}_{\phi}\big((1-\Delta)^{-\frac14}[1-\Delta,\pi_1(f)](1-\Delta)^{-\frac54}\big).$$	
By Lemma 6.2.3 in \cite{LMSZ-book}, the operator $T_{\phi}^{(1-\Delta)^{\frac12}}:(\mathcal{L}_{d,\infty})_0\to(\mathcal{L}_{d,\infty})_0$ is bounded. It, therefore, suffices to establish  	
$$(1-\Delta)^{-\frac14}[1-\Delta,\pi_1(f)](1-\Delta)^{-\frac54}\in\mathcal{L}_d\subset(\mathcal{L}_{d,\infty})_0.$$
	
To see the latter assertion, note that
\begin{align*}
	[1-\Delta,\pi_1(f)] & =\sum_{k=1}^d[D_k^2,\pi_1(f)]=\sum_{k=1}^d \left( D_k\pi_1(D_kf)+\pi_1(D_kf)D_k \right) \\
	& = - \pi_1(\Delta f)+2\sum_{k=1}^d\pi_1(D_kf)D_k.
\end{align*}
It, therefore, suffices to establish
$$(1-\Delta)^{-\frac14}\pi_1(D_kf)(1-\Delta)^{-\frac34}\in\mathcal{L}_d,\quad 1\leq k\leq d,\quad (1-\Delta)^{-\frac14}\pi_1(\Delta f)(1-\Delta)^{-\frac54}\in\mathcal{L}_d.$$
Furthermore, it suffices to show
$$\pi_1(D_kf)(1-\Delta)^{-\frac34}\in\mathcal{L}_d,\quad 1\leq k\leq d,\quad \pi_1(\Delta f)(1-\Delta)^{-\frac54}\in\mathcal{L}_d.$$
Both inclusions follow from the Kato--Seiler--Simon inequality (see, e.g., \cite[Theorem 4.1]{Simon-book}). This completes the proof.
\end{proof}


\begin{lem}\label{third commutator lemma}
If $f\in C^{\infty}_c(\mathbb{R}^d),$ then
$$\left[\pi_1(f),\frac{D_k}{(-\Delta)^{\frac12}}\right](1-\Delta)^{-\frac12}\in(\mathcal{L}_{d,\infty})_0,\quad 1\leq k\leq d.$$
\end{lem}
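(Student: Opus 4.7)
The plan is to compare $\pi_2(g_k) = D_k/(-\Delta)^{1/2}$ with the regularised Fourier multiplier $D_k(1-\Delta)^{-1/2}$. Writing
$$\pi_2(g_k) = D_k(1-\Delta)^{-\frac12} + R,$$
a direct computation shows that $R$ is a bounded Fourier multiplier whose symbol $\rho(\xi)=\xi_k/|\xi|-\xi_k/\sqrt{1+|\xi|^2}$ satisfies $|\rho(\xi)|=O(|\xi|^{-2})$ at infinity. Correspondingly, the commutator splits as
$$[\pi_1(f),\pi_2(g_k)](1-\Delta)^{-\frac12}=[\pi_1(f),D_k(1-\Delta)^{-\frac12}](1-\Delta)^{-\frac12}+[\pi_1(f),R](1-\Delta)^{-\frac12},$$
and the goal is to show that each summand lies in $(\mathcal L_{d,\infty})_0$.

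For the first summand I apply the Leibniz rule together with the resolvent identity $[\pi_1(f),(1-\Delta)^{-1/2}]=(1-\Delta)^{-1/2}[(1-\Delta)^{1/2},\pi_1(f)](1-\Delta)^{-1/2}$. After multiplying by $(1-\Delta)^{-1/2}$ and using $[\pi_1(f),D_k]=-i\pi_1(\partial_k f)$, this produces
$$-i\pi_1(\partial_k f)(1-\Delta)^{-1}+D_k(1-\Delta)^{-\frac12}\cdot[(1-\Delta)^{\frac12},\pi_1(f)](1-\Delta)^{-1}.$$
The first piece lies in $\mathcal L_d$ by the Kato--Seiler--Simon bound since the multiplier symbol $(1+|\xi|^2)^{-1}$ belongs to $L_d(\mathbb R^d)$. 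The second piece is the product of the bounded Fourier multiplier $D_k(1-\Delta)^{-1/2}$ with an element of $(\mathcal L_{d,\infty})_0$ provided by Lemma \ref{first commutator lemma}.

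For the second summand the algebraic key is that the decay $|\rho(\xi)|=O(|\xi|^{-2})$ is precisely what makes $R(1-\Delta)$ extend to a bounded operator $P$, so $R=P(1-\Delta)^{-1}$. The Leibniz rule yields
$$[\pi_1(f),R](1-\Delta)^{-\frac12}=[\pi_1(f),P](1-\Delta)^{-\frac32}+P\cdot[\pi_1(f),(1-\Delta)^{-1}](1-\Delta)^{-\frac12}.$$
Writing $[\pi_1(f),(1-\Delta)^{-1}]=(1-\Delta)^{-1}[1-\Delta,\pi_1(f)](1-\Delta)^{-1}$ and expanding the first-order commutator $[1-\Delta,\pi_1(f)]$ exactly as in the proof of Lemma \ref{first commutator lemma}, every term decomposes as a product of bounded operators with factors of the form $\pi_1(h)(1-\Delta)^{-s}$ that lie in $\mathcal L_d$ by Kato--Seiler--Simon. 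The term $[\pi_1(f),P](1-\Delta)^{-3/2}$ is treated similarly: both $\pi_1(f)P(1-\Delta)^{-3/2}$ and $P\pi_1(f)(1-\Delta)^{-3/2}$ belong to $\mathcal L_d$ because the relevant Fourier multiplier symbols decay fast enough to sit in $L_d(\mathbb R^d)$.

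The main obstacle is purely one of bookkeeping: each application of the Leibniz rule risks stranding an unbounded factor of $(1-\Delta)^{1/2}$ on the wrong side of an ideal element. The cancellation built into $R$ resolves this for the second summand---the $O(|\xi|^{-2})$ decay permits extracting a full factor of $(1-\Delta)^{-1}$, so that only integer powers of the resolvent appear, and those interact cleanly with the first-order operator $[1-\Delta,\pi_1(f)]$ via Kato--Seiler--Simon. Without this cancellation one would be forced to bound the half-derivative commutator $[\pi_1(f),(1-\Delta)^{-1/2}]$ in $\mathcal L_d$ via a separate double operator integral argument, and this is precisely the shortcut that the decomposition $R=P(1-\Delta)^{-1}$ affords.
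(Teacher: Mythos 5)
Your proposal is correct and follows the same essential strategy as the paper: write $D_k(-\Delta)^{-1/2}=D_k(1-\Delta)^{-1/2}+g_k(-i\nabla)$ with a correction symbol $g_k$ that is bounded near the origin and decays like $|\xi|^{-2}$ at infinity, then deal with the two pieces separately. Your treatment of the first piece matches the paper's exactly: Leibniz plus the resolvent identity produce a Kato--Seiler--Simon term (the paper's term~II, yours $\pm i\pi_1(\partial_k f)(1-\Delta)^{-1}$; you have a harmless sign slip, it should be $+i$) and a term $D_k(1-\Delta)^{-1/2}[(1-\Delta)^{1/2},\pi_1(f)](1-\Delta)^{-1}$ handled by Lemma~\ref{first commutator lemma}. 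Where you diverge is the correction term: the paper does not touch the commutator $[\pi_1(f),g_k(-i\nabla)]$ at all, but simply observes that the two individual products $\pi_1(f)g_k(-i\nabla)(1-\Delta)^{-1/2}$ and $g_k(-i\nabla)\pi_1(f)(1-\Delta)^{-1/2}$ already lie in $\mathcal L_d$ by Kato--Seiler--Simon, because $g_k(\xi)(1+|\xi|^2)^{-1/2}=O(|\xi|^{-3})\in L_d$. Your further factorisation $R=P(1-\Delta)^{-1}$ followed by another Leibniz step is correct (though the bound on $P\pi_1(f)(1-\Delta)^{-3/2}$ needs the adjoint trick rather than a direct application of Kato--Seiler--Simon, since the multiplication operator sits in the middle), but it is extra machinery that isn't needed. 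In particular, the concluding remark that without the decomposition $R=P(1-\Delta)^{-1}$ one would have to bound $[\pi_1(f),(1-\Delta)^{-1/2}]$ in $\mathcal L_d$ via a double-operator-integral argument overstates the difficulty: no commutator cancellation is required for the error term, since $g_k$ decays two full orders and the extra $(1-\Delta)^{-1/2}$ already puts both one-sided products in $\mathcal L_d$.
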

\begin{proof} Without loss of generality, assume that $f$ is real-valued. Let
$$g_k({\bf t}):=\frac{t_k}{|{\bf t}|}-\frac{t_k}{(1+|{\bf t}|^2)^{\frac12}}=\frac{t_k}{|{\bf t}|}\cdot\frac1{(1+|{\bf t}|^2)^{\frac12}(|{\bf t}|+(1+|{\bf t}|^2)^{\frac12})},\quad {\bf t}\in\mathbb{R}^d.$$
We decompose the operator $\left[\pi_1(f),\frac{D_k}{(-\Delta)^{1/2}}\right](1-\Delta)^{-\frac12}$ into four parts as follows:
$$[\pi_1(f),\frac{D_k}{(-\Delta)^{\frac12}}](1-\Delta)^{-\frac12}={\rm I}+{\rm II}+{\rm III}+{\rm IV},$$
where
$${\rm I}= \frac{D_k}{(1-\Delta)^{\frac12}}[(1-\Delta)^{\frac12},\pi_1(f)](1-\Delta)^{-1},\quad {\rm II}= [\pi_1(f),D_k](1-\Delta)^{-1},$$
$${\rm III}= \pi_1(f)g_k(-i\nabla)(1-\Delta)^{-\frac12},\quad {\rm IV}=-g_k(-i\nabla)\pi_1(f)(1-\Delta)^{-\frac12}.$$
	
It follows from Lemma \ref{first commutator lemma} that ${\rm I} \in(\mathcal{L}_{d,\infty})_0.$ Since $[D_k,\pi_1(f)]=\pi_1(D_k f),$ it follows from the Kato--Seiler--Simon inequality (see, e.g., \cite[Theorem 4.1]{Simon-book}) that ${\rm II},{\rm III},{\rm IV}\in\mathcal{L}_d\subset(\mathcal{L}_{d,\infty})_0.$ A combination of all four inclusions completes the proof.
\end{proof}


\begin{lem}\label{pre commutator lemma}
If $f\in C^{\infty}_c(\mathbb{R}^d)$ and if $g\in C^{\infty}(\mathbb{S}^{d-1}),$ then
$$[\pi_1(f),\pi_2(g)](1-\Delta)^{-\frac12}\in(\mathcal{L}_{d,\infty})_0.$$
\end{lem}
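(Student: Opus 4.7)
The plan is a two-step argument: first reduce to the case when $g$ is a polynomial in the coordinate functions $s_1,\dots,s_d$ on $\mathbb S^{d-1}$ via Stone--Weierstrass, and then handle polynomials by the Leibniz rule using Lemma \ref{third commutator lemma}.

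\textbf{Step 1 (polynomial case).} Write $r_k = \pi_2(s_k) = D_k/(-\Delta)^{1/2}$. Since $\pi_2:L_\infty(\mathbb S^{d-1})\to B(L_2(\mathbb R^d))$ is a $\ast$-homomorphism, for any polynomial $P=P(s_1,\dots,s_d)$ we have $\pi_2(P|_{\mathbb S^{d-1}}) = P(r_1,\dots,r_d)$. The operators $r_1,\dots,r_d$ and $(1-\Delta)^{-1/2}$ are all Fourier multipliers and hence pairwise commute. Using the Leibniz rule on a monomial,
\begin{align*}
[\pi_1(f),\, r_{k_1}\cdots r_{k_n}](1-\Delta)^{-\frac12}
&= \sum_{j=1}^n r_{k_1}\cdots r_{k_{j-1}}\,\bigl([\pi_1(f),r_{k_j}](1-\Delta)^{-\frac12}\bigr)\,r_{k_{j+1}}\cdots r_{k_n}.
\end{align*}
By Lemma \ref{third commutator lemma}, each inner factor $[\pi_1(f),r_{k_j}](1-\Delta)^{-1/2}$ lies in $(\mathcal L_{d,\infty})_0$, and the outer factors are bounded. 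Since $(\mathcal L_{d,\infty})_0$ is a two-sided ideal in $B(L_2(\mathbb R^d))$ it follows by linearity that
$$
[\pi_1(f),\pi_2(P)](1-\Delta)^{-\frac12}\in(\mathcal L_{d,\infty})_0
$$
for every polynomial $P$.

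\textbf{Step 2 (approximation).} By the Stone--Weierstrass theorem, polynomials in $s_1,\dots,s_d$ are uniformly dense in $C(\mathbb S^{d-1})$. Choose polynomials $P_n$ with $\|g-P_n\|_{L_\infty(\mathbb S^{d-1})}\to 0$; then $\|\pi_2(g)-\pi_2(P_n)\|_\infty\to 0$. Using that $\pi_2(g-P_n)$ commutes with $(1-\Delta)^{-1/2}$,
\begin{align*}
[\pi_1(f),\pi_2(g-P_n)](1-\Delta)^{-\frac12}
&= \pi_1(f)(1-\Delta)^{-\frac12}\pi_2(g-P_n) - \pi_2(g-P_n)\pi_1(f)(1-\Delta)^{-\frac12}.
\end{align*}
Since $f\in C^\infty_c(\mathbb R^d)$, we have $\pi_1(f)(1-\Delta)^{-1/2}\in\mathcal L_{d,\infty}$: for $d\geq 3$ this follows from the $\mathcal L_{d,\infty}$-Cwikel estimate (noting that $(1+|s|^2)^{-1/2}\in L_{d,\infty}(\mathbb R^d)$), and for $d=2$ it is Theorem \ref{solomyak thm}. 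Hence
$$
\bigl\|[\pi_1(f),\pi_2(g-P_n)](1-\Delta)^{-\frac12}\bigr\|_{d,\infty}\leq 2^{\frac1d+1}\,\|\pi_1(f)(1-\Delta)^{-\frac12}\|_{d,\infty}\,\|g-P_n\|_\infty \longrightarrow 0.
$$
Since $(\mathcal L_{d,\infty})_0$ is closed in $\mathcal L_{d,\infty}$, combining this with Step 1 yields $[\pi_1(f),\pi_2(g)](1-\Delta)^{-1/2}\in(\mathcal L_{d,\infty})_0$, as required.

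The only non-routine ingredient is Lemma \ref{third commutator lemma}, which is already established, so I expect no serious obstacle; the most delicate point is just verifying the $\mathcal L_{d,\infty}$-bound for $\pi_1(f)(1-\Delta)^{-1/2}$ in the borderline dimension $d=2$, where Solomyak's theorem must be invoked.
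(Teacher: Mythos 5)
Your proof is correct, but it takes a genuinely different route from the paper's. The paper establishes Lemma \ref{pre commutator lemma} by extending $g$ to a Schwartz function $h$ on $\mathbb R^d$, writing $\pi_2(g)=h(B)=(2\pi)^{-d/2}\int_{\mathbb R^d}(\mathcal F h)(t)e^{i\langle B,t\rangle}\,dt$ as a Bochner integral in $(\mathcal L_{d,\infty})_0$, and controlling the integrand via the exponential commutator formula $[x,e^{iy}]=i\int_0^1 e^{isy}[y,x]e^{i(1-s)y}\,ds$, which yields $\|[\pi_1(f),e^{i\langle B,t\rangle}](1-\Delta)^{-1/2}\|_{d,\infty}\leq\sum_k|t_k|\,\|[\pi_1(f),B_k](1-\Delta)^{-1/2}\|_{d,\infty}$; the rapid decay of $\mathcal F h$ then makes the Bochner integral converge. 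You instead replace the continuous (Fourier) resolution by a discrete one: the Leibniz rule on monomials in $r_1,\ldots,r_d$ reduces to Lemma \ref{third commutator lemma} directly, and Stone--Weierstrass plus the a~priori Cwikel/Solomyak bound $\pi_1(f)(1-\Delta)^{-1/2}\in\mathcal L_{d,\infty}$ and closedness of $(\mathcal L_{d,\infty})_0$ handle general $g$. Both proofs lean on Lemma \ref{third commutator lemma} as the single nontrivial input. Your approach is more elementary --- it avoids the Schwartz extension, the distributional Fourier transform, and the Bochner measurability discussion --- and is structurally similar to what the paper does one level up, in Step~1 of the proof of Theorem \ref{commutator theorem}, where it likewise combines an already-established dense case with an $\mathcal L_{d,\infty}$ a~priori bound. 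The paper's integral representation, by contrast, yields an explicit quantitative estimate on $\|[\pi_1(f),\pi_2(g)](1-\Delta)^{-1/2}\|_{d,\infty}$ in terms of weighted $L_1$ norms of $\mathcal F h$, which your qualitative approximation argument does not. Minor point: to justify $\pi_2(P|_{\mathbb S^{d-1}})=P(r_1,\ldots,r_d)$ it is worth noting $r_1^2+\cdots+r_d^2=1$, so the restriction to the sphere is consistent.
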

\begin{proof} Let $B=\frac{-i\nabla}{(-\Delta)^{\frac12}}=\left\{\frac{D_k}{(-\Delta)^{\frac12}}\right\}_{k=1}^d.$ We may extend\footnote{For example, we may set $h(t)=g(\frac{t}{|t|})\phi_0(|t|),$ $t\in\mathbb{R}^d,$ where $\phi_0$ is a Schwartz function on $\mathbb{R}$ which vanishes on $(-\infty,\frac12)$  and such that $\phi_0(1)=1.$} $g$ to a Schwartz function $h$ on $\mathbb{R}^d.$ The Fourier transform of $h$ is also a Schwartz function. By Definition \ref{pis def}, we have
$$\pi_2(g)=g(B)=h(B)=(2\pi)^{-\frac{d}{2}}\int_{\mathbb{R}^d}(\mathcal{F}h)(t)e^{i\langle B,t\rangle}dt.$$
Therefore,
\begin{equation}\label{precl eq0}
[\pi_1(f),\pi_2(g)](1-\Delta)^{-\frac12}=(2\pi)^{-\frac{d}{2}}\int_{\mathbb{R}^d}(\mathcal{F}h)(t)[\pi_1(f),e^{i\langle B,t\rangle}](1-\Delta)^{-\frac12}dt.
\end{equation}
	
An elementary computation yields
\begin{equation}\label{commutator with exponent int rep}
[x,e^{iy}]=i\int_0^1e^{isy}[y,x]e^{i(1-s)y}ds
\end{equation}
for all bounded operators $x$ and self-adjoint bounded operators $y.$ If $[x,y]\in(\mathcal{L}_{d,\infty})_0,$ then the intergand in \eqref{commutator with exponent int rep} belongs to $(\mathcal{L}_{d,\infty})_0$ and is weakly measurable. Since $(\mathcal{L}_{d,\infty})_0$ is a separable Banach space, the integrand is Bochner measurable and the integral can be understood in the Bochner sense in $(\mathcal{L}_{d,\infty})_0.$ Thus, $[x,e^{iy}]\in (\mathcal{L}_{d,\infty})_0$ and
\begin{equation}\label{commutator with exponent}
\|[x,e^{iy}]\|_{d,\infty}\leq\|[y,x]\|_{d,\infty}.
\end{equation}

Thus, for every $t\in\mathbb{R}^d,$
$$[\pi_1(f),e^{i\langle B,t\rangle}](1-\Delta)^{-\frac12}=[\pi_1(f)(1-\Delta)^{-\frac12},e^{i\langle B,t\rangle}]\in (\mathcal{L}_{d,\infty})_0$$
and
\begin{equation}\label{precl eq1}
\Big\|[\pi_1(f),e^{i\langle B,t\rangle}](1-\Delta)^{-\frac12}\Big\|_{d,\infty}\leq\sum_{k=1}^d|t_k|\cdot \Big\|[\pi_1(f),B_k](1-\Delta)^{-\frac12}\Big\|_{d,\infty},
\end{equation}
where the right hand side in \eqref{precl eq1} is finite by Lemma \ref{third commutator lemma}.

Hence, the integrand in \eqref{precl eq0} belongs to $(\mathcal{L}_{d,\infty})_0$ and is weakly measurable. Since $(\mathcal{L}_{d,\infty})_0$ is a separable Banach space, the integrand in \eqref{precl eq0} is Bochner measurable in $(\mathcal{L}_{d,\infty})_0$ and the integral in \eqref{precl eq0} can be understood in the Bochner sense in $(\mathcal{L}_{d,\infty})_0.$
\end{proof}


\subsection{Proof of Theorem \ref{commutator theorem}}

We are now ready to prove the main result of this section.

\begin{proof}[Proof of Theorem \ref{commutator theorem}]
	\emph{Step 1.} We prove the first assertion in the theorem under the additional assumption that $f\in C^\infty_c(\mathbb R^d)$.
	
	Let $h\in C^\infty(\mathbb S^{d-1})$ and write
	\begin{align*}
		[\pi_1(f)(1-\Delta)^{-\frac12},\pi_2(g)]
		& =[\pi_1(f)(1-\Delta)^{-\frac12},\pi_2(g-h)]+[\pi_1(f)(1-\Delta)^{-\frac12},\pi_2(h)] \\
		& =\pi_1(f)\pi_2(g-h)(1-\Delta)^{-\frac12}-(1-\Delta)^{-\frac12}\pi_2(g-h)\pi_1(f) \\
		& \quad -\pi_2(g-h)\cdot [\pi_1(f),(1-\Delta)^{-\frac12}] \\
		& \quad +[\pi_1(f),\pi_2(h)](1-\Delta)^{-\frac12}.
	\end{align*}
	Noting that $g-h\in L_\infty(\mathbb S^{d-1})$, the third summand belongs to $(\mathcal{L}_{d,\infty})_0$. Indeed, Theorem 1.6 in \cite{MSX} taken with $\alpha=-1$ and $\beta=0$ yields the stronger assertion that this summand belongs to $\mathcal{L}_{\frac{d}{2},\infty}$. The fourth summand belongs to $(\mathcal{L}_{d,\infty})_0$ by Lemma \ref{pre commutator lemma}. Therefore, it follows from Lemma \ref{bs sep lemma} that
	\begin{align*}
	&~ \quad \limsup_{t\to\infty}t^{\frac1d}\mu\Big(t,[\pi_1(f)(1-\Delta)^{-\frac12},\pi_2(g)]\Big) \\
	& = \limsup_{t\to\infty}t^{\frac1d}\mu\Big(t,\pi_1(f)\pi_2(g-h)(1-\Delta)^{-\frac12}-(1-\Delta)^{-\frac12}\pi_2(g-h)\pi_1(f)\Big) \\
	& \leq c_d \limsup_{t\to\infty}t^{\frac1d}\mu\Big(t,\pi_1(f)\pi_2(g-h)(1-\Delta)^{-\frac12}\Big) \\
	& \quad + c_d  \limsup_{t\to\infty}t^{\frac1d}\mu\Big(t,(1-\Delta)^{-\frac12}\pi_2(g-h)\pi_1(f)\Big) \\
	& \leq c_d' \|f\|_d \|g-h\|_p \,,
	\end{align*}
	where $p=d$ for $d>2$ and $p=4$ for $d=2$. The last inequality uses Lemmas \ref{d>2 useful cwikel} and \ref{d=2 useful limsup} for $d>2$ and $d=2$, respectively. Choosing a sequence of $h$'s that converges to $g$ in $L_p(\mathbb S^{d-1})$, we conclude that
	$$\limsup_{t\to\infty}t^{\frac1d}\mu\Big(t,[\pi_1(f)(1-\Delta)^{-\frac12},\pi_2(g)]\Big) = 0 \,,$$
	which is the claimed assertion.
	
	\medskip
	
\emph{Step 2.} We now prove the first assertion in the theorem for general $f\in L_\infty(\mathbb R^d)$ with compact support.
	
By applying dilation and translation, we may assume without loss of generality that $f$ is supported on $[0,1]^d.$
	
We choose a sequence $(f_n)_{n\geq1}\subset C^{\infty}_c(\mathbb{R}^d)$ supported in $[0,1]^d$ such that $\|f-f_n\|_d<\frac1n$ if $d>2$ and $\|f-f_n\|_{L_2\log L}<\frac1n$ if $d=2$. By the first step, we have
$$[\pi_1(f_n)(1-\Delta)^{-\frac12},\pi_2(g)]\in(\mathcal{L}_{d,\infty})_0.$$

On the other hand, by Cwikel's estimate in $\mathcal{L}_{d,\infty},$ $d>2,$ (stated as Theorem 4.2 in \cite{Simon-book}; see also \cite{Cwikel} and a more general assertion in \cite{LeSZ}) and Theorem \ref{solomyak thm} for $d=2$, we have
$$\|\pi_1(f_n-f)(1-\Delta)^{-\frac12}\|_{d,\infty}\leq\frac{c_d}{n},\quad n\geq 1.$$
This implies
\begin{align*}
&~\quad \Big\|[\pi_1(f_n)(1-\Delta)^{-\frac12},\pi_2(g)]-[\pi_1(f)(1-\Delta)^{-\frac12},\pi_2(g)]\Big\|_{d,\infty}\\
&\leq c_d'\|\pi_1(f_n-f)(1-\Delta)^{-\frac12}\|_{d,\infty}\|g\|_{\infty}\\
&\leq \frac{c_dc_d' \|g\|_\infty}{n},\quad n\geq 1.
\end{align*}
Thus,
$$[\pi_1(f_n)(1-\Delta)^{-\frac12},\pi_2(g)]\to [\pi_1(f)(1-\Delta)^{-\frac12},\pi_2(g)],\quad n\to\infty,$$
in $\mathcal{L}_{d,\infty}.$ Since $(\mathcal{L}_{d,\infty})_0$ is closed in $\mathcal L_{d,\infty}$ by Lemma \ref{bs limit lemma}, the first assertion follows.	

\medskip

\emph{Step 3.} We finally prove the second assertion in the theorem.

Indeed, it follows from the first one and the identity
$$[\pi_1(f),\pi_2(g)(1-\Delta)^{-\frac12}]=[\pi_1(f)(1-\Delta)^{-\frac12},\pi_2(g)]+\pi_2(g)\cdot [\pi_1(f),(1-\Delta)^{-\frac12}].$$
This completes the proof of Theorem \ref{commutator theorem}.
\end{proof}


\section{Proof of Theorem \ref{main thm} in a special case. Case $d>2$}

Our goal in this and the next section is to prove the following theorem, which is a special case of Theorem \ref{main thm} and contains the main difficulty.

\begin{thm}\label{main thm special}
Let $\mathcal{A}_1$ and $\mathcal{A}_2$ be as in Definition \ref{pis def}. Let $(f_n)_{n=1}^N\subset\mathcal{A}_1$ be compactly supported and let $(g_n)_{n=1}^N\subset\mathcal{A}_2.$ If $$T=\sum_{n=1}^N\pi_1(f_n)\pi_2(g_n),$$
then
$$
\lim_{t\to\infty}t^{\frac1d}\mu(t,T(1-\Delta)^{-\frac12})= d^{-\frac1d} (2\pi)^{-1} \|{\rm sym}(T)\|_d.
$$
\end{thm}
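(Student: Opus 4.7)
My strategy is to transfer the asymptotic problem to the torus $\mathbb{T}^d = \mathbb{R}^d/\mathbb{Z}^d$, where $(1-\Delta_{\mathbb{T}^d})^{-1/2}$ is a diagonal Fourier multiplier with explicit eigenvalues, and then use the abstract limit Lemma~\ref{third abstract limit lemma}. To begin, Theorem~\ref{commutator theorem} lets me rearrange
$$T(1-\Delta)^{-\frac12} \equiv \sum_{n=1}^N \pi_1(f_n)(1-\Delta)^{-\frac12}\pi_2(g_n) \pmod{(\mathcal{L}_{d,\infty})_0},$$
so by Lemma~\ref{bs sep lemma} the asymptotic coefficient is unchanged. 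Compact support of the $f_n$ means, after an affine change of variable, that their supports sit inside $[0,1)^d$; identifying $[0,1)^d$ with a fundamental domain of $\mathbb{T}^d$ and using a Fourier-side comparison built on Kato--Seiler--Simon type bounds, I would show that this Euclidean operator differs from its toric analogue $S = \sum_n M_{f_n}(1-\Delta_{\mathbb{T}^d})^{-\frac12} g_n(-i\nabla_{\mathbb{T}^d}/\sqrt{-\Delta_{\mathbb{T}^d}})$ by an element of $(\mathcal{L}_{d,\infty})_0$.

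Treating a single pair $(f,g)$ first, I would apply Lemma~\ref{third abstract limit lemma} on $[0,1)^d \cong \mathbb{T}^d$ with $A = (1-\Delta_{\mathbb{T}^d})^{-\frac12}\, g(-i\nabla_{\mathbb{T}^d}/\sqrt{-\Delta_{\mathbb{T}^d}})$ and spectral measure $\nu(I) = M_{\chi_I}$. Hypothesis~\eqref{tall aa} follows from the translation invariance of $A$ as a Fourier multiplier on the torus; hypothesis~\eqref{tall ab} is the toric commutator Theorem~\ref{toric commutator theorem}; hypothesis~\eqref{tall ac} with $\psi(s) = Cs^{1/d}$ follows from the Cwikel estimate Lemma~\ref{d>2 useful cwikel} (the only place where $d>2$ is used). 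The base limit for $A$ itself is then a direct lattice-point computation: $A$ is diagonal in the Fourier basis of $L_2(\mathbb{T}^d)$ with eigenvalues $g(k/|k|)(1+4\pi^2|k|^2)^{-1/2}$ for $k \in \mathbb{Z}^d\setminus\{0\}$, and standard counting yields $\lim_{t\to\infty} t^{1/d}\mu(t, A) = d^{-1/d}(2\pi)^{-1}\|g\|_{L_d(\mathbb{S}^{d-1})}$. Combined with Lemma~\ref{third abstract limit lemma} this gives $\lim t^{1/d}\mu(t, M_{\chi_I} A) = d^{-1/d}(2\pi)^{-1}m(I)^{1/d}\|g\|_d$, exactly the claimed formula for the product symbol $\chi_I \otimes g$.

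The extension to the finite sum proceeds by approximating $\operatorname{sym}(T) \in L_d(\mathbb{R}^d\times\mathbb{S}^{d-1})$ by simple functions $\Phi_\epsilon = \sum_k c_k \chi_{I_k}\otimes\chi_{J_k}$ with pairwise disjoint rectangles $I_k\times J_k$. Thanks to this disjointness together with the commutator theorems, the operators $c_k M_{\chi_{I_k}}\pi_2(\chi_{J_k})(1-\Delta)^{-1/2}$ become pairwise orthogonal modulo $(\mathcal{L}_{d,\infty})_0$, so Lemma~\ref{bs pairwise orthogonal lemma} assembles them into an $\ell^d$-sum equal to $d^{-1/d}(2\pi)^{-1}\|\Phi_\epsilon\|_d$. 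Finally, Lemma~\ref{bs limit lemma} passes from $\Phi_\epsilon$ to $\operatorname{sym}(T)$, yielding the target limit $d^{-1/d}(2\pi)^{-1}\|\operatorname{sym}(T)\|_d$.

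The main obstacle I anticipate is the transference step: showing that replacing $(1-\Delta_{\mathbb{R}^d})^{-1/2}$ by its toric counterpart yields a discrepancy in the small ideal $(\mathcal{L}_{d,\infty})_0$ rather than merely in $\mathcal{L}_{d,\infty}$. This will require the compact support of the $M_{f_n}$ together with a quantitative Fourier or periodization argument comparing the two resolvent kernels, probably combined with a double-operator-integral estimate in the spirit of Section~4. A secondary delicate point is the almost-orthogonality in the sum assembly: one must verify that commutators $[M_{\chi_{I_k}}, \pi_2(\chi_{J_k})]$ land in $(\mathcal{L}_{d,\infty})_0$ so that cross terms vanish in the asymptotic limit.
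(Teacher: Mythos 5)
Your outline for $d>2$ closely parallels the paper's Section~5: transfer to the torus, apply Lemma~\ref{third abstract limit lemma} with the spatial spectral measure $\nu(I)=M_{\chi_I}$, verify its hypotheses via translation invariance, the toric commutator theorem, and the Cwikel estimate, and then assemble the general finite sum by approximation and near-orthogonality. Two remarks on the details: the paper's transference lemma (Lemma~\ref{plane minus torus lemma}) does not compare the resolvents $(1-\Delta_{\mathbb{R}^d})^{-1/2}$ and $(1-\Delta_{\mathbb{T}^d})^{-1/2}$ directly; instead it passes to the squared modulus $|A^\ast|^2$, whose kernel is a compactly supported perturbation of a homogeneous convolution kernel of degree $2-d$, so that the relevant integral kernels coincide exactly on $[-1,1]^d$ (Lemma~\ref{passing to torus lemma}) and differ globally by an $O(|{\bf n}|^{-4})$ tail in $(\mathcal{L}_{d/2,\infty})_0$ (Lemma~\ref{homegeneous compute lemma}). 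This trick is what makes the discrepancy land in the small ideal; without it, the comparison you sketch via Kato--Seiler--Simon and double operator integrals is not obviously strong enough. Also, for the finite-sum assembly the paper does not approximate the symbol by disjoint rectangles; it conditionally averages the $f_n$ onto a cube grid and sandwiches the pieces as $\pi_1(h_{{\bf k}})\pi_2(g_{{\bf k}})(-\Delta)^{-1/2}\pi_1(h_{{\bf k}})$, which yields \emph{exact} pairwise orthogonality before invoking Lemma~\ref{bs pairwise orthogonal lemma}; your version is only orthogonal modulo $(\mathcal{L}_{d,\infty})_0$ and would need a similar sandwiching device before that lemma can be applied.

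The genuine gap is the case $d=2$. You flag it yourself by noting that hypothesis~\eqref{tall ac} rests on the Cwikel estimate Lemma~\ref{d>2 useful cwikel}, which is only available for $d>2$, but you do not supply a substitute. This is not cosmetic: for $d=2$ the estimate $\|M_{\chi_I}(1-\Delta)^{-1/2}\|_{2,\infty}\lesssim m(I)^{1/2}$ fails; the correct endpoint bound (Theorem~\ref{solomyak thm}) involves the Orlicz norm $L_2\log L$, which does \emph{not} scale as $m(I)^{1/2}$ uniformly over Borel subsets $I$, so hypothesis~\eqref{tall ac} cannot be verified with $\nu(I)=M_{\chi_I}$. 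The paper's entire Section~6 exists precisely to handle this: it replaces the spatial spectral measure by an angular one on the Fourier side, $\nu(I)=\pi_2(\chi_{2\pi I\,\mathrm{mod}\,2\pi})$ on $\mathbb{S}^1$, uses rotation invariance (hence requires $f$ radially symmetric as the base case, Lemma~\ref{d=2 radial lemma}), verifies hypothesis~\eqref{tall ac} via the mixed bound of Lemma~\ref{d=2 useful limsup} (with the $L_4(\mathbb{S}^1)$ norm providing the needed modulus $\psi(s)=Cs^{1/4}$), and inputs the two-dimensional Birman--Schwinger asymptotics $\lim t^{1/2}\mu(t,M_f(1-\Delta)^{-1/2})=(4\pi)^{-1/2}\|f\|_2$ as the ``diagonal'' computation. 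None of this is recoverable from your torus/lattice-counting setup, so as written your argument proves only the $d>2$ half of the theorem.
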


The proof of this theorem is somewhat different in dimensions $d\geq 3$ and $d=2$. In this section we deal with the former case and in the next one with the latter.


\subsection{Asymptotics on the torus}

Our goal in this subsection is to prove the following analogue of Theorem \ref{main thm special} for $d\geq 3$ on the torus $\mathbb T^d = (\mathbb R/2\pi\mathbb Z)^d$. We recall from the discussion before Theorem \ref{toric commutator theorem} our convention for the operator $(-\Delta_{\mathbb{T}^d})^{-\frac12}$.

\begin{thm}\label{main thm special torus}
	Let $d\geq 3$. Let $(f_n)_{n=1}^N\subset C(\mathbb T^d)$ and let $(g_n)_{n=1}^N\subset C(\mathbb S^{d-1})$. Then
	$$
	\lim_{t\to\infty}t^{\frac1d}\mu\Big(t, \sum_{n=1}^NM_{f_n}g_n\big(\frac{-i\nabla_{\mathbb{T}^d}}{\sqrt{-\Delta_{\mathbb{T}^d}}}\big)(-\Delta_{\mathbb{T}^d})^{-\frac12}\Big)
	= d^{-\frac1d} (2\pi)^{-1} \left\| \sum_{n=1}^N f_n\otimes g_n \right\|_d .
	$$
\end{thm}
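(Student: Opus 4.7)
The plan is to compute a tractable model case by lattice-point counting, extend to indicator coefficients via Lemma \ref{third abstract limit lemma}, combine using Lemma \ref{second abstract limit lemma} to handle step-function coefficients, and finish by approximation. Throughout I write $B = -i\nabla_{\mathbb T^d}/\sqrt{-\Delta_{\mathbb T^d}}$, so that $g(B)$ denotes the homogeneous Fourier multiplier on $L_2(\mathbb T^d)$.

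For the model case $f \equiv 1$ and a single $g$, the operator $g(B)(-\Delta_{\mathbb T^d})^{-1/2}$ is a Fourier multiplier, diagonalised by $\{e^{ik\cdot x}\}_{k \in \mathbb Z^d \setminus \{0\}}$ with eigenvalues $|g(k/|k|)|/|k|$. A Weyl lattice-point count in the star-shaped region $\{\xi \in \mathbb R^d : |\xi| < |g(\xi/|\xi|)|/\lambda\}$, whose Euclidean volume equals $(d\lambda^d)^{-1} \|g\|_{L_d(\mathbb S^{d-1})}^d$, yields
$$\lim_{t\to\infty} t^{1/d} \mu\bigl(t, g(B)(-\Delta_{\mathbb T^d})^{-1/2}\bigr) = d^{-1/d} \|g\|_{L_d(\mathbb S^{d-1})},$$
which agrees with $d^{-1/d}(2\pi)^{-1} \|1 \otimes g\|_{L_d(\mathbb T^d \times \mathbb S^{d-1})}$ since $\|1 \otimes g\|_d = 2\pi \|g\|_{L_d(\mathbb S^{d-1})}$.

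To extend to $M_{\chi_I} g(B)$ for Borel $I \subset \mathbb T^d$, I apply Lemma \ref{third abstract limit lemma} to $A = g(B)(-\Delta_{\mathbb T^d})^{-1/2}$ and $\nu(I) = M_{\chi_I}$, identifying $\mathbb T^d \cong [0,1)^d$ up to rescaling of measure. Hypothesis (i) holds because $A$ is translation-invariant and translations on $\mathbb T^d$ permute congruent cubes; (ii) is Theorem \ref{toric commutator theorem}; (iii) follows from the Cwikel-type bound $\|M_{\chi_I} g(B)(-\Delta_{\mathbb T^d})^{-1/2}\|_{d,\infty} \le C \|g\|_{L_d(\mathbb S^{d-1})} m_{\mathbb T}(I)^{1/d}$ valid for $d \ge 3$, so $\psi(s) \propto s^{1/d}$. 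The lemma then gives
$$\lim_{t\to\infty} t^{1/d} \mu\bigl(t, M_{\chi_I} g(B)(-\Delta_{\mathbb T^d})^{-1/2}\bigr) = (2\pi)^{-1} d^{-1/d} m_{\mathbb T}(I)^{1/d} \|g\|_{L_d(\mathbb S^{d-1})}.$$
Now assume each $f_n$ is a step function on a common partition $\mathbb T^d = \bigsqcup_j I_j$, i.e.\ $f_n = \sum_j c_{j,n}\chi_{I_j}$, and set $G_j = \sum_n c_{j,n} g_n$. Then
$$T(-\Delta_{\mathbb T^d})^{-1/2} = \sum_j M_{\chi_{I_j}} G_j(B)(-\Delta_{\mathbb T^d})^{-1/2},$$
and Theorem \ref{toric commutator theorem} gives $[T(-\Delta_{\mathbb T^d})^{-1/2}, M_{\chi_{I_j}}] \in (\mathcal L_{d,\infty})_0$. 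Writing $p_j = M_{\chi_{I_j}}$, the orthogonality of the indicators gives $p_j T(-\Delta_{\mathbb T^d})^{-1/2} = M_{\chi_{I_j}} G_j(B)(-\Delta_{\mathbb T^d})^{-1/2}$, whose asymptotics are already known. Lemma \ref{second abstract limit lemma} then delivers
$$\lim_{t\to\infty} t^{1/d}\mu\bigl(t, T(-\Delta_{\mathbb T^d})^{-1/2}\bigr) = (2\pi)^{-1} d^{-1/d} \Big(\sum_j m_{\mathbb T}(I_j) \|G_j\|_d^d\Big)^{1/d} = (2\pi)^{-1} d^{-1/d} \Big\|\sum_n f_n \otimes g_n\Big\|_d.$$

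For general $(f_n)_{n=1}^N \subset C(\mathbb T^d)$, uniform continuity on the compact torus provides approximation in $L_\infty$ (hence in $L_d$) by step functions on a common refining partition. The torus Cwikel-type bound gives convergence of the corresponding operators in $\mathcal L_{d,\infty}$, the asymptotic coefficient $\|\sum_n f_n \otimes g_n\|_d$ is continuous under $L_d$ perturbation of the $f_n$, and Lemma \ref{bs limit lemma} closes the argument. I expect the main technical point to be verifying the torus Cwikel-type estimate needed for hypothesis (iii) of Lemma \ref{third abstract limit lemma} (for $d \ge 3$ this reduces to a Birman--Solomyak-style count using the eigenvalues $|k|^{-1}$ of $(-\Delta_{\mathbb T^d})^{-1/2}$), along with the routine identification of $(-\Delta_{\mathbb T^d})^{-1/2}$ with $(1-\Delta_{\mathbb T^d})^{-1/2}$ required to apply Theorem \ref{toric commutator theorem} in the form stated (their difference acts on nonconstant modes with symbol of order $|k|^{-3}$, which places the discrepancy in a much smaller ideal than $(\mathcal L_{d,\infty})_0$).
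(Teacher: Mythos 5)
Your proposal follows essentially the same route as the paper: the same model-case computation for $f\equiv 1$ (the paper phrases it as a continuous-to-discrete rearrangement argument, Lemma \ref{reduction to tensor product lemma}, rather than a lattice-point count, but the substance is identical), the same application of Lemma \ref{third abstract limit lemma} to pass to indicator coefficients (Proposition \ref{main prop special torus}), the same step-function combination (though the paper builds an explicit pairwise-orthogonal block sum and invokes Lemma \ref{bs pairwise orthogonal lemma}, where you invoke Lemma \ref{second abstract limit lemma} directly --- these are equivalent), and the same approximation finish via Lemma \ref{bs limit lemma}. One small remark: for hypothesis (iii) of Lemma \ref{third abstract limit lemma} the paper gets by with the simpler bound $\|g\|_\infty\,\|M_{\chi_{\tilde I}}(-\Delta_{\mathbb T^d})^{-1/2}\|_{d,\infty}\lesssim \|g\|_\infty\, m(\tilde I)^{1/d}$ from the abstract Cwikel-type estimate of \cite{LeSZ}, so there is no need to establish the sharper torus Cwikel bound with $\|g\|_{L_d(\mathbb S^{d-1})}$ that you flag as a technical point, and your observation about the $(-\Delta_{\mathbb T^d})^{-1/2}$ versus $(1-\Delta_{\mathbb T^d})^{-1/2}$ discrepancy is correct and worth keeping, since the paper does not spell it out.
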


We begin by proving the assertion for $N=1$ and $f_1=1.$

\begin{lem}\label{reduction to tensor product lemma} Let $g\in C(\mathbb{S}^{d-1}).$ We have
$$\lim_{t\to\infty}t^{\frac1d}\mu\Big(t,g\big(\frac{-i\nabla_{\mathbb{T}^d}}{\sqrt{-\Delta_{\mathbb{T}^d}}}\big)(-\Delta_{\mathbb{T}^d})^{-\frac12}\Big)=d^{-\frac1d}\, \|g\|_d.$$
\end{lem}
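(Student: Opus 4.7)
The operator $A := g\big(\frac{-i\nabla_{\mathbb T^d}}{\sqrt{-\Delta_{\mathbb T^d}}}\big)(-\Delta_{\mathbb T^d})^{-\frac12}$ is a Fourier multiplier on $L_2(\mathbb T^d)$, diagonalised by the orthonormal basis $\{(2\pi)^{-d/2}e^{ik\cdot x}\}_{k \in \mathbb Z^d}$ with eigenvalues $\lambda_k = g(k/|k|)/|k|$ for $k \neq 0$ and $\lambda_0 = 0$. Consequently $\mu(t,A)$ is the non-increasing rearrangement of $\{|g(k/|k|)|/|k|\}_{k \in \mathbb Z^d \setminus\{0\}}$, and via the standard duality between singular values and distribution functions, the claim is equivalent to
\begin{equation*}
\lim_{\lambda \to 0^+} \lambda^d N(\lambda) = \frac{\|g\|_d^d}{d},
\qquad N(\lambda) := \#\{k\in\mathbb Z^d \setminus\{0\}: |g(k/|k|)| > \lambda |k|\}.
\end{equation*}
Writing $R := 1/\lambda$ and $\Omega := \{x \in \mathbb R^d \setminus\{0\} : |x| < |g(x/|x|)|\}$, one has $N(\lambda) = \#(\mathbb Z^d \cap R\Omega)$ (up to the irrelevant origin), and integration in polar coordinates yields $|R\Omega| = R^d \|g\|_d^d/d$. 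Thus the lemma reduces to a lattice-point counting statement in the dilated star-shaped region $R\Omega$.

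To accommodate the fact that $g$ is only continuous, I would use a uniform step-function approximation. By uniform continuity of $g$ on the compact sphere, for every $\epsilon > 0$ there is a finite Borel partition $\mathbb S^{d-1} = \bigsqcup_{i=1}^M E_i$ into sets with piecewise smooth boundary (products of small intervals in spherical coordinates work) and constants $a_i^-\leq a_i^+$ with $\max_i(a_i^+ - a_i^-) < \epsilon$ such that $a_i^- \leq |g(\omega)| \leq a_i^+$ on $E_i$. Defining $\Omega^\pm := \bigsqcup_i \{x : x/|x| \in E_i,\ 0 < |x| < a_i^\pm\}$, one has $\Omega^- \subseteq \Omega \subseteq \Omega^+$, so
\begin{equation*}
\#(\mathbb Z^d \cap R\Omega^-) \leq N(\lambda) \leq \#(\mathbb Z^d \cap R\Omega^+),
\end{equation*}
and polar coordinates give $|\Omega^\pm| = d^{-1}\sum_i (a_i^\pm)^d \sigma(E_i)$, which converges to $d^{-1}\|g\|_d^d$ as $\epsilon \to 0$.

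The main obstacle is the classical Weyl-type lattice point asymptotic $\#(\mathbb Z^d \cap R\Omega^\pm) = R^d|\Omega^\pm| + o(R^d)$ as $R \to \infty$; once this is in hand, combining with the sandwich inequality and sending $\epsilon \to 0$ completes the proof. I would establish it by noting that each conical sector $\{x : x/|x| \in E_i,\ |x| < R a_i^\pm\}$ is a bounded Jordan-measurable set with piecewise smooth boundary of $(d-1)$-dimensional Hausdorff measure $O(R^{d-1})$, so the discrepancy between the lattice count and the volume is controlled by the number of unit cubes $k + [0,1)^d$, $k \in \mathbb Z^d$, meeting this boundary, which is $O(R^{d-1}) = o(R^d)$. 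Equivalently, one invokes the classical theorem that $\#(\mathbb Z^d \cap R\Omega_0) = R^d|\Omega_0| + o(R^d)$ for any bounded Jordan-measurable $\Omega_0$, applied to each sector in the finite decomposition of $\Omega^\pm$.
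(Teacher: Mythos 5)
Your proposal is correct, and it takes a genuinely different route from the paper's. Both arguments agree that the operator is diagonal with eigenvalues $g(k/|k|)/|k|$ and that the problem is to compare the discrete ``sum'' over $\mathbb Z^d$ with the corresponding continuous ``integral'' over $\mathbb R^d$, but the mechanics differ. The paper works directly at the level of decreasing rearrangements: it shows that the exact rearrangement of the continuous function $G_1(\mathbf s)=g(\mathbf s/|\mathbf s|)|\mathbf s|^{-1}$ on $\mathbb R^d$ is $d^{-1/d}\|g\|_d\,t^{-1/d}$ (via the measure-preserving change of variables $\mathbf s\mapsto(|\mathbf s|^d,\mathbf s/|\mathbf s|)$), then observes that the floor-discretized version $G_2(\mathbf s)=g(\lfloor\mathbf s\rfloor/|\lfloor\mathbf s\rfloor|)|\lfloor\mathbf s\rfloor|^{-1}$ differs from $G_1$ by an element of $(L_{d,\infty})_0$ because $g$ is (uniformly) continuous, and finally identifies the rearrangement of $G_2$ with that of the eigenvalue sequence. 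This keeps the argument squarely inside the $(L_{d,\infty})_0$-calculus that runs through the whole paper (Lemma~\ref{bs sep lemma} and its companions). You instead pass to the counting function $N(\lambda)$ and reduce the lemma to a Weyl-type lattice-point estimate $\#(\mathbb Z^d\cap R\Omega)=R^d|\Omega|+o(R^d)$ for the star-shaped region $\Omega=\{x:|x|<|g(x/|x|)|\}$, handling the mere continuity of $g$ by sandwiching $\Omega$ between finite unions $\Omega^\pm$ of conical sectors with piecewise-smooth boundary. Your reduction via the singular value/distribution function duality is standard and correct, and the lattice-point count for Jordan-measurable sets (equivalently, for sets with boundary of zero Lebesgue measure, or here with $O(R^{d-1})$ Minkowski-content boundary) is a classical fact. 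The paper's version is arguably shorter and avoids importing the lattice-point machinery; yours is more geometric and perhaps more familiar to a reader coming from spectral asymptotics for differential operators. Either is a valid proof.
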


\begin{proof}
	\emph{Step 1.} We define the function $G_1$ on $\mathbb{R}^d$ by setting
$$G_1({\bf s})=g(\frac{{\bf s}}{|{\bf s}|})|{\bf s}|^{-1},\quad {\bf s}\in\mathbb{R}^d,$$
and claim that
$$\mu(t,G_1)= d^{-\frac1d} \|g\|_dt^{-\frac1d},\quad t>0.$$
Here, $\mu(\cdot,G_1)$ is a decreasing rearrangement of $|G_1|$ (as in the Subsection \ref{function spaces subsection}).

Consider the measure space $X=\mathbb{R}_+\times\mathbb{S}^{d-1}$ equipped with the product measure $\frac1d m_{\mathbb{R}_+}\times m_{\mathbb{S}^{d-1}},$ where $m_{\mathbb{R}_+}$ is the Lebesgue measure on $\mathbb{R}_+$ and $m_{\mathbb{S}^{d-1}}$ is the Lebesgue measure on the sphere. The spherical coordinate change
$${\bf s}\to (|{\bf s}|^d,\frac{{\bf s}}{|{\bf s}|}),\quad s\in\mathbb{R}^d,$$
preserves the measure (i.e., transforms Lebesgue measure on $\mathbb{R}^d$ to the above pro\-duct measure). Hence, this spherical coordinate change preserves the decreasing rearrangement. The image of $G_1$ under the spherical coordinate change is $z^{\frac1d}\otimes g,$ where $z(t)=t^{-1},$ $t>0.$ Thus,
$$\mu_{L_{\infty}(\mathbb{R}^d)}(G_1)=\mu_{L_{\infty}(\mathbb{R}_+\times\mathbb{S}^{d-1},\frac1d m_{\mathbb{R}_+}\times m_{\mathbb{S}^{d-1}})}(z^{\frac1d}\otimes g)=d^{-\frac1d} \|g\|_dz^{\frac1d},$$
where we use the notation $\mu_{L_{\infty}(X,\nu)}(x)$ to emphasise that $x$ is a $\nu$-measurable function on a measure space $(X,\nu)$ and that the decreasing rearrangement of $|x|$ is taken with respect to the measure $\nu.$ This immediately yields the assertion of Step~1.

\medskip
	
{\it Step 2.} We define the function $G_2$ on $\mathbb{R}^d$ by setting
$$G_2({\bf s})=g(\frac{{\lfloor {\bf s}\rfloor}}{|{\lfloor {\bf s}\rfloor}|})|{\lfloor {\bf s}\rfloor}|^{-1}\chi_{\mathbb{Z}^d\backslash\{0\}}(\lfloor {\bf s}\rfloor),\quad {\bf s}\in\mathbb{R}^d.$$
Here, $\lfloor {\bf s}\rfloor$ is a shorthand for the vector $(\lfloor s_1\rfloor,\cdots,\lfloor s_d,\rfloor).$ We claim that	
$$\lim_{t\to\infty}t^{\frac1d}\mu(t,G_2)=d^{-\frac1d} \|g\|_d.$$

Indeed, it follows from the continuity of $g$ that
$$G_1-G_2\in (L_{d,\infty})_0(\mathbb{R}^d),$$
so, by a simple commutative analogue of Lemma \ref{bs sep lemma}, the assertion of Step 2 follows from that of Step 1.

\medskip

{\it Step 3.} For the sequence (indexed by $\mathbb{Z}^d$)
$$G_3({\bf n})=\Big\{g(\frac{{\bf n}}{|{\bf n}|})|{\bf n}|^{-1}\chi_{\mathbb{Z}^d\backslash\{0\}}({\bf n})\Big\}_{{\bf n}\in\mathbb{Z}^d} $$
it is immediate that
$$\mu_{L_{\infty}(\mathbb{R}^d)}(G_2)=\mu_{l_{\infty}(\mathbb Z^d)}(G_3).$$
It follows from Step 2 that
$$\lim_{t\to\infty}t^{\frac1d}\mu_{l_{\infty}(\mathbb{Z}^d)}(t,G_3)= d^{-\frac1d} \|g\|_d.$$
Since the operator
$$g\big(\frac{-i\nabla_{\mathbb{T}^d}}{\sqrt{-\Delta_{\mathbb{T}^d}}}\big)(-\Delta_{\mathbb{T}^d})^{-\frac12}$$
is diagonal in the Fourier basis and since the corresponding sequence of diagonal entries is exactly $G_3,$ the assertion follows.
\end{proof}

Next, we prove Theorem \ref{main thm special torus} in the case $N=1$ and with $f_1$ being an indicator function.

\begin{prop}\label{main prop special torus}
	Let $d\geq 3$. Let $I \subset \mathbb T^d$ be a Borel subset and let $g\in C(\mathbb S^{d-1})$. Then
	$$
	\lim_{t\to\infty}t^{\frac1d}\mu\Big(t, M_{\chi_I}g\big(\frac{-i\nabla_{\mathbb{T}^d}}{\sqrt{-\Delta_{\mathbb{T}^d}}}\big)(-\Delta_{\mathbb{T}^d})^{-\frac12}\Big)
	= d^{-\frac1d} (2\pi)^{-1} m(I)^{\frac1d} \|g\|_d \,.
	$$
\end{prop}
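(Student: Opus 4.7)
The plan is to apply the abstract localization result Lemma~\ref{third abstract limit lemma} to
$$A := g\Bigl(\tfrac{-i\nabla_{\mathbb T^d}}{\sqrt{-\Delta_{\mathbb T^d}}}\Bigr)(-\Delta_{\mathbb T^d})^{-\frac12}$$
with $p=d$ and the spectral measure $\nu(I)=M_{\chi_I}$, after identifying $\mathbb T^d$ with $[0,1)^d$ via $x\mapsto 2\pi x$. The base case that $A\in\mathcal L_{d,\infty}$ and $\lim_{t\to\infty} t^{1/d}\mu(t,A) = d^{-1/d}\|g\|_d$ is exactly Lemma~\ref{reduction to tensor product lemma}. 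The only gap between the conclusion of Lemma~\ref{third abstract limit lemma}, which is about $A M_{\chi_I}$, and the statement we want, which is about $M_{\chi_I} A$, is a commutator in $(\mathcal L_{d,\infty})_0$ which can be absorbed using Lemma~\ref{bs sep lemma}.

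I would verify the three hypotheses of Lemma~\ref{third abstract limit lemma} as follows. For (i), the unitaries are the torus translations $U_h$ on $L_2(\mathbb T^d)$: every Fourier multiplier commutes with $U_h$, so $U_h A U_h^{-1}=A$, while $U_h M_{\chi_I} U_h^{-1}=M_{\chi_{I+h}}$; moreover, only the rational cubes $[\mathbf k/k,(\mathbf k+{\bf 1})/k)$ are actually invoked in the proof of Lemma~\ref{third abstract limit lemma}, and these are permuted among themselves by the $U_{\mathbf j/k}$, so no wrap-around ambiguity arises. Hypothesis (ii) is supplied by Theorem~\ref{toric commutator theorem}, which gives $\bigl[M_{\chi_I}, g\bigl(\tfrac{-i\nabla_{\mathbb T^d}}{\sqrt{-\Delta_{\mathbb T^d}}}\bigr)(1-\Delta_{\mathbb T^d})^{-1/2}\bigr]\in(\mathcal L_{d,\infty})_0$; the discrepancy $(1-\Delta_{\mathbb T^d})^{-1/2}-(-\Delta_{\mathbb T^d})^{-1/2}$ is diagonal in the Fourier basis with eigenvalues $O(|k|^{-3})$, hence lies in $\mathcal L_1\subset(\mathcal L_{d,\infty})_0$, so the switch from $1-\Delta_{\mathbb T^d}$ to $-\Delta_{\mathbb T^d}$ does not affect the conclusion. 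For (iii), I would use $\mu(t,AM_{\chi_I})=\mu(t,M_{\chi_I} A^*)$ together with a toroidal Cwikel-type bound
$$\|M_{\chi_I}(-\Delta_{\mathbb T^d})^{-1/2}\|_{d,\infty} \leq C_d\, m(I)^{1/d},$$
to obtain $\limsup_{t\to\infty} t^{1/d}\mu(t,AM_{\chi_I}) \leq C_d\|g\|_\infty m(I)^{1/d}$, so the function $\psi(s)=C_d\|g\|_\infty s^{1/d}$ satisfies the required vanishing at $0$.

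Lemma~\ref{third abstract limit lemma} then yields $\lim_{t\to\infty} t^{1/d}\mu(t,AM_{\chi_I}) = m_{[0,1)^d}(I)^{1/d}\cdot d^{-1/d}\|g\|_d$. Because $m_{\mathbb T^d}=(2\pi)^d m_{[0,1)^d}$ under the rescaling, this equals $(2\pi)^{-1} d^{-1/d} m(I)^{1/d} \|g\|_d$, and one final appeal to Lemma~\ref{bs sep lemma} to exchange $AM_{\chi_I}$ for $M_{\chi_I} A$ via hypothesis (ii) gives the claimed identity. The main obstacle I anticipate is hypothesis (iii): the toroidal Cwikel-type inequality is not recorded explicitly in the paper, and would need to be obtained either by a direct Fourier-side computation (the eigenvalues of $(-\Delta_{\mathbb T^d})^{-1/2}$ decay like $n^{-1/d}$, bringing it within the scope of the general framework of~\cite{LeSZ}) or by a partition-of-unity transfer from the Euclidean bound of Lemma~\ref{d>2 useful cwikel}.
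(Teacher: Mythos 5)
Your proposal follows essentially the same route as the paper's proof: apply Lemma~\ref{third abstract limit lemma} with $p=d$ to the same operator $A$, using torus translations for hypothesis (i), Theorem~\ref{toric commutator theorem} for hypothesis (ii), and a toroidal Cwikel-type estimate for hypothesis (iii) — the paper cites \cite[Theorem 3.4 or Corollary 3.6]{LeSZ} for the latter, as you anticipated. One small slip: for $d\geq 3$ the discrepancy $(1-\Delta_{\mathbb T^d})^{-1/2}-(-\Delta_{\mathbb T^d})^{-1/2}(1-P)$, with Fourier eigenvalues $O(|k|^{-3})$, is \emph{not} trace-class (since $\sum_{k\in\mathbb Z^d\setminus\{0\}}|k|^{-3}$ diverges), but it does lie in $\mathcal L_{d/3,\infty}\subset(\mathcal L_{d,\infty})_0$, so the conclusion you draw from it stands.
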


\begin{proof} Throughout this proof, $g\in C(\mathbb S^{d-1})$ is fixed.	

We will apply Lemma \ref{third abstract limit lemma} with $p=d$ and
$$A=g\big(\frac{-i\nabla_{\mathbb{T}^d}}{\sqrt{-\Delta_{\mathbb{T}^d}}}\big)(-\Delta_{\mathbb{T}^d})^{-\frac12}.$$
Then, by Lemma \ref{reduction to tensor product lemma}, $A\in\mathcal L_{d,\infty}$ and
$$\lim_{t\to\infty} t^\frac1d \mu(t,A)=d^{-\frac1d} \|g\|_d.$$
	
	Let us define a spectral measure $\nu$ on $[0,1)^d$. For a Borel set $I\subset[0,1)^d$, we define a Borel set $\tilde I\subset\mathbb T^d$ by dilating it by a factor of $2\pi$ (resulting in a subset of $[0,2\pi)^d$), then extending it periodically to a subset of $\mathbb R^d$ and finally interpreting it as a subset of $\mathbb T^d$. If $\chi_{\tilde I}$ is the indicator function of $\tilde I$, then we set
	$$
	\nu(I) = M_{\chi_{\tilde I}} \,.
	$$
	This, indeed, defines a spectral measure.
	
	If $I_1,I_2\subset[0,1)^d$ are congruent cubes, then there is a translation $U\in B(L_2(\mathbb{T}^d))$ such that $\nu(I_1)=U^{-1}\nu(I_2)U.$ Clearly, translations commute with functions of the gradient and, therefore, with $A$. This verifies the first assumption in Lemma \ref{third abstract limit lemma}.
	
The second assumption in Lemma \ref{third abstract limit lemma} follows from  Theorem \ref{toric commutator theorem}. The third assumption in Lemma \ref{third abstract limit lemma} follows from the estimate
\begin{align*}
&~\quad \limsup_{t\to\infty}t^{\frac1d}\mu\Big(t, M_{\chi_{\tilde{I}}}g\big(\frac{-i\nabla_{\mathbb{T}^d}}{\sqrt{-\Delta_{\mathbb{T}^d}}}\big)(-\Delta_{\mathbb{T}^d})^{-\frac12}\Big)\\
&\leq\|g\|_{\infty}\limsup_{t\to\infty}t^{\frac1d}\mu\Big(t, M_{\chi_{\tilde{I}}}(-\Delta_{\mathbb{T}^d})^{-\frac12}\Big)\\
&\leq\|g\|_{\infty}\|M_{\chi_{\tilde{I}}}(-\Delta_{\mathbb{T}^d})^{-\frac12}\|_{d,\infty}\\
&\leq C_d\|g\|_{\infty}m(\tilde{I})^{\frac1d}.
\end{align*}
Here, the last step follows from the abstract Cwikel-type estimate (see \cite[Theorem 3.4 or Corollary 3.6]{LeSZ}).

Hence, one can apply Lemma \ref{third abstract limit lemma} and one obtains
	$$
	\lim_{t\to\infty} t^\frac1d \mu(t, A M_{\chi_{\tilde I}}) = d^{-\frac1d} 2\pi m(I)^\frac1d \|g\|_d = d^{-\frac1d} m(\tilde I)^\frac1d \|g\|_d \,.
	$$
	Because of the second assumption in Lemma \ref{third abstract limit lemma} and Lemma \ref{bs sep lemma}, this remains valid with $A M_{\chi_{\tilde I}}$ replaced by $M_{\chi_{\tilde I}}A$, which is the assertion of the proposition.
\end{proof}

We can now turn to the proof of the main result of this subsection.

\begin{proof}[Proof of Theorem \ref{main thm special torus}]
	For every $K\in\mathbb N$ and $\mathbf k\in\{0,\ldots,K-1\}^d$, let
	$$
	h_{\mathbf k,K}(\mathbf t) = \chi_{[\frac{2\pi\mathbf k}{K}, \frac{2\pi(\mathbf k+\mathbf 1)}{K})}(\mathbf t) \,,
	\qquad \mathbf t \in[0,2\pi)^d \equiv \mathbb T^d \,.
	$$
	Let $f_{n,K}$ be the conditional expectation on $L_\infty(\mathbb T^d)$ of $f_n$ onto the subalgebra generated by $(h_{\mathbf k,K})_{\mathbf k\in\{0,\ldots,K-1\}^d}$ and write
	$$
	f_{n,K} = \sum_{\mathbf k\in \{0,\ldots,K-1\}^d} c_{n,\mathbf k,K} h_{\mathbf k,K} \,.
	$$
	We set
	$$
	g_{\mathbf k,K} = \sum_{n=1}^N c_{n,\mathbf k,K} g_n
	$$
	and
	$$
	A_K = \sum_{\mathbf k\in\{0,\ldots,K-1\}^d} \pi_1(h_{\mathbf k,K}) \pi_2(g_{\mathbf k,K})(-\Delta_{\mathbb T^d})^{-\frac12} \pi_1(h_{\mathbf k,K}) \,.
	$$
	Since the operators appearing in the sum defining $A_K$ are pairwise orthogonal, we have
	$$
	\mu(A_K) = \mu\left( \bigoplus_{\mathbf k\in\{0,\ldots,K-1\}^d} \pi_1(h_{\mathbf k,K}) \pi_2(g_{\mathbf k,K})(-\Delta_{\mathbb T^d})^{-\frac12} \pi_1(h_{\mathbf k,K}) \right).
	$$
	By Proposition \ref{main prop special torus}, we have
	$$
	\lim_{t\to\infty} t^\frac1d \mu(t, \pi_1(h_{\mathbf k,K}) \pi_2(g_{\mathbf k,K})(-\Delta_{\mathbb T^d})^{-\frac12}) = d^{-\frac1d} K^{-1} \|g_{\mathbf k,K}\|_d \,.
	$$
	On the other hand, by Theorem \ref{toric commutator theorem},
	\begin{align*}
		&~\quad  \pi_1(h_{\mathbf k,K}) \pi_2(g_{\mathbf k,K})(-\Delta_{\mathbb T^d})^{-\frac12} \pi_1(h_{\mathbf k,K}) - \pi_1(h_{\mathbf k,K}) \pi_2(g_{\mathbf k,K})(-\Delta_{\mathbb T^d})^{-\frac12} \\
		&  = \pi_1(h_{\mathbf k,K}) [\pi_2(g_{\mathbf k,K})(-\Delta_{\mathbb T^d})^{-\frac12}, \pi_1(h_{\mathbf k,K}) ] \in (\mathcal L_{d,\infty})_0 \,,
	\end{align*}
	so, by Lemma \ref{bs sep lemma},
	$$
	\lim_{t\to\infty} t^\frac1d \mu(t, \pi_1(h_{\mathbf k,K}) \pi_2(g_{\mathbf k,K})(-\Delta_{\mathbb T^d})^{-\frac12}\pi_1(h_{\mathbf k,K})) = d^{-\frac1d} K^{-1} \|g_{\mathbf k,K}\|_d \,.
	$$
	We deduce, by Lemma \ref{bs pairwise orthogonal lemma},
	\begin{align}\label{eq:torusproof}
	&~\quad  \lim_{t\to\infty} t^\frac1d \mu\left( t,\bigoplus_{\mathbf k\in\{0,\ldots,K-1\}^d} \pi_1(h_{\mathbf k,K}) \pi_2(g_{\mathbf k,K})(-\Delta_{\mathbb T^d})^{-\frac12} \pi_1(h_{\mathbf k,K}) \right) \notag \\
	& = d^{-\frac1d} \left( \frac{1}{K^d} \sum_{\mathbf k\in\{0,\ldots,K-1\}^d} \|g_{\mathbf k,K}\|_d^d \right)^\frac1d.
	\end{align}

	Now let us introduce
	$$
	T_K = \sum_{n=1}^N \pi_1(f_{n,K})\pi_2(g_n) = \sum_{\mathbf k\in\{0,\ldots,K-1\}^d} \pi_1(h_{\mathbf k,K}) \pi_2(g_{\mathbf k,K}) \,.
	$$
	By Theorem \ref{toric commutator theorem}, we have
	\begin{align*}
&~\quad 	T_K(-\Delta_{\mathbb T^d})^{-\frac12} - A_K \\
& = \sum_{\mathbf k\in\{0,\ldots,K-1\}^d} \pi_1(h_{\mathbf k,K}) \left[\pi_1(h_{\mathbf k,K}),\pi_2(g_{\mathbf k,K})(-\Delta_{\mathbb T^d})^{-\frac12}\right] \in(\mathcal L_{d,\infty})_0 \,.
	\end{align*}
	It follows now from \eqref{eq:torusproof} and Lemma \ref{bs sep lemma} that
	\begin{equation}
		\label{eq:torusproof2}
		\lim_{t\to\infty} t^\frac1d \mu(t,T_K(-\Delta_{\mathbb T^d})^{-\frac12}) = d^{-\frac1d} \left( \frac{1}{K^d} \sum_{\mathbf k\in\{0,\ldots,K-1\}^d} \|g_{\mathbf k,K}\|_d^d \right)^\frac1d.
	\end{equation}

	We note that
	$$
	\frac{1}{K^d} \sum_{\mathbf k\in\{0,\ldots,K-1\}^d} \|g_{\mathbf k,K}\|_d^d = (2\pi)^{-d} \left\|\sum_{\mathbf k\in\{0,\ldots,K-1\}^d} h_{\mathbf k,K}\otimes g_{\mathbf k,K} \right\|_d^d
	$$
	Moreover, it is clear from the expression for $f_{n,K}$ and from the definition of $g_{\mathbf k,K}$ that
	$$\sum_{\mathbf k\in\{0,\ldots,K-1\}^d} h_{\mathbf k,K}\otimes g_{\mathbf k,K}=\sum_{n=1}^{N}f_{n,K}\otimes g_{n}.$$
	Thus, \eqref{eq:torusproof2} is the same as
	\begin{equation}\label{eq:torusproof3}
		\lim_{t\to\infty} t^\frac1d \mu(t,T_K(-\Delta_{\mathbb T^d})^{-\frac12}) = d^{-\frac1d} (2\pi)^{-1} \left\|\sum_{n=1}^{N}f_{n,K}\otimes g_{n} \right\|_d.
	\end{equation}
	
	Note that $f_{n,K}\to f_n$ in the uniform norm as $K\to\infty.$ Hence, $T_K(-\Delta_{\mathbb T^d})^{-\frac12}\to T(-\Delta_{\mathbb T^d})^{-\frac12}$ in $\mathcal{L}_{d,\infty}$ as $K\to\infty$ and $\sum_{n=1}^{N}f_{n,K}\otimes g_{n} \to \sum_{n=1}^{N}f_n\otimes g_n$ in $L_d(\mathbb{T}^d\times\mathbb{S}^{d-1})$ as $K\to\infty$. Theorem \ref{main thm special torus} now follows from \eqref{eq:torusproof3} and Lemma \ref{bs limit lemma}.
\end{proof}


\subsection{Passing from the torus to the whole space}

In this subsection we consider the discrete and continuous Fourier transforms, defined respectively, by
$$
\hat a({\bf t}) = \sum_{n\in\mathbb{Z}^d}a(n)e^{in \cdot {\bf t}} \,,
\qquad {\bf t}\in[-\pi,\pi]^d \,,
$$
and
$$
\mathcal F^{-1} h(\mathbf t) = (2\pi)^{-\frac d2} \int_{\mathbb R^d} h(\xi) e^{i\xi\cdot\mathbf t} \,d\xi \,, \qquad \mathbf t\in\mathbb R^d \,.
$$

\begin{lem}\label{passing to torus lemma} Let $f_1,f_2,h\in L_\infty(\mathbb R^d)$ and suppose that $f_1$ and $f_2$ are supported in $[0,1]^d$ and that $\mathcal{F}^{-1}h$ is a function. If $a\in l_{\infty}(\mathbb{Z}^d)$ satisfies $\hat{a}=(2\pi)^{\frac{d}{2}}\mathcal{F}^{-1}h$ on $[-1,1]^d,$ then
$$\pi_1(f_1)h(-i\nabla)\pi_1(f_2)|_{L_2([0,1]^d)}=M_{f_1}a(-i\nabla_{\mathbb{T}^d})M_{f_2}|_{L_2([0,1]^d)},$$
where, on the right side, $f_j$ is identified with the $2\pi$-periodic extension of $f_j|_{[-\pi,\pi]^d}$.
\end{lem}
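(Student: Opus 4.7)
The plan is to show that both operators, viewed as integral operators, have the same kernel on $[0,1]^d \times [0,1]^d$. Since $f_1$ and $f_2$ are supported in $[0,1]^d$, the operators in question vanish outside $[0,1]^d$ in both input and output, so this is enough.

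First I would compute the kernel on the Euclidean side. For $\xi\in L_2(\mathbb R^d)$, Fubini gives
$$
(h(-i\nabla)\xi)(x) = (2\pi)^{-d}\int_{\mathbb R^d}\int_{\mathbb R^d} h(s)e^{i(x-y)\cdot s}\xi(y)\,dy\,ds,
$$
and under the assumption that $\mathcal F^{-1}h$ is a bona fide function, the inner integral equals $(2\pi)^{d/2}(\mathcal F^{-1}h)(x-y)$. Hence $\pi_1(f_1)h(-i\nabla)\pi_1(f_2)$ has kernel
$$
K_{\mathbb R^d}(x,y) = (2\pi)^{-d/2}\,f_1(x)\,(\mathcal F^{-1}h)(x-y)\,f_2(y).
$$

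Next I would compute the kernel on the toric side. Expanding $\xi \in L_2(\mathbb T^d)$ in Fourier series $\xi(x)=\sum_{n\in\mathbb Z^d} c_n e^{in\cdot x}$ with $c_n = (2\pi)^{-d}\int_{\mathbb T^d} \xi(y)e^{-in\cdot y}\,dy$, one obtains
$$
(a(-i\nabla_{\mathbb T^d})\xi)(x) = (2\pi)^{-d}\int_{\mathbb T^d} \xi(y)\hat a(x-y)\,dy,
$$
so $M_{f_1}a(-i\nabla_{\mathbb T^d})M_{f_2}$ has kernel
$$
K_{\mathbb T^d}(x,y) = (2\pi)^{-d}\,f_1(x)\,\hat a(x-y)\,f_2(y).
$$

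Finally I would invoke the hypothesis: for $x,y\in[0,1]^d$, we have $x-y\in[-1,1]^d$, and there $\hat a(x-y) = (2\pi)^{d/2}(\mathcal F^{-1}h)(x-y)$. Substituting,
$$
K_{\mathbb T^d}(x,y) = (2\pi)^{-d}\cdot(2\pi)^{d/2}\,f_1(x)\,(\mathcal F^{-1}h)(x-y)\,f_2(y) = K_{\mathbb R^d}(x,y)
$$
for all $(x,y)\in[0,1]^d\times[0,1]^d$. Since both $f_1$ and $f_2$ vanish outside $[0,1]^d$ (in the Euclidean case by hypothesis, in the toric case because we use the $2\pi$-periodic extension of $f_j|_{[-\pi,\pi]^d}$ and $[0,1]^d\subset[-\pi,\pi]^d$), the two kernels both vanish off of $[0,1]^d\times[0,1]^d$, and so the operators agree on $L_2([0,1]^d)$.

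The only subtle step is bookkeeping of the Fourier normalisation; the $(2\pi)^{d/2}$ factor in the hypothesis $\hat a=(2\pi)^{d/2}\mathcal F^{-1}h$ on $[-1,1]^d$ is precisely what reconciles the $(2\pi)^{-d/2}$ in $K_{\mathbb R^d}$ with the $(2\pi)^{-d}$ in $K_{\mathbb T^d}$, and the constraint $x-y\in[-1,1]^d$ is automatically satisfied on $[0,1]^d\times[0,1]^d$, which is why the localisation to $L_2([0,1]^d)$ is essential.
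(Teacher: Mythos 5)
Your proof is correct and takes essentially the same approach as the paper: both proofs compute the integral kernels of $\pi_1(f_1)h(-i\nabla)\pi_1(f_2)$ and $M_{f_1}a(-i\nabla_{\mathbb{T}^d})M_{f_2}$, and then observe that the hypothesis $\hat a = (2\pi)^{d/2}\mathcal{F}^{-1}h$ on $[-1,1]^d$ forces the kernels to coincide on $[0,1]^d\times[0,1]^d$, which suffices since both kernels vanish off this square. Your accounting of the normalisation constants ($(2\pi)^{-d/2}$ versus $(2\pi)^{-d}\cdot(2\pi)^{d/2}$) matches the paper's.
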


\begin{proof} It is clear that $h(-i\nabla)$ is an integral operator on $L_2(\mathbb{R}^d)$ with integral kernel
$$({\bf t},{\bf u})\to (2\pi)^{-\frac{d}{2}}(\mathcal{F}^{-1}h)({\bf t}-{\bf u}),\quad {\bf t},{\bf u}\in\mathbb{R}^d.$$
Thus, $\pi_1(f_1)h(-i\nabla)\pi_1(f_2)$ is an integral operator with integral kernel
$$({\bf t},{\bf u})\to (2\pi)^{-\frac{d}{2}}f_1({\bf t})f_2({\bf u})(\mathcal{F}^{-1}h)({\bf t}-{\bf u}),\quad {\bf t},{\bf u}\in\mathbb{R}^d.$$

On the other hand, $a(-i\nabla_{\mathbb{T}^d})$ is an integral operator on $L_2([-\pi,\pi]^d)$ with integral kernel
$$({\bf t},{\bf u})\to (2\pi)^{-d}\sum_{n\in\mathbb{Z}^d}a(n)e^{in\cdot({\bf t}-{\bf u})}=(2\pi)^{-d}\hat{a}({\bf t}-{\bf u}),\quad {\bf t},{\bf u}\in[-\pi,\pi]^d.$$
Thus, $M_{f_1}a(-i\nabla_{\mathbb{T}^d})M_{f_2}$ is an integral operator on $L_2([-\pi,\pi]^d)$ with integral kernel
$$({\bf t},{\bf u})\to (2\pi)^{-d}f_1({\bf t})f_2({\bf u})\hat{a}({\bf t}-{\bf u}),\quad {\bf t},{\bf u}\in[-\pi,\pi]^d.$$

To prove the equality of those operators, it suffices to establish the equality of their integral kernels. In other words, we need to check
$$(2\pi)^{-\frac{d}{2}}f_1({\bf t})f_2({\bf u})(\mathcal{F}^{-1}h)({\bf t}-{\bf u})=(2\pi)^{-d}f_1({\bf t})f_2({\bf u})\hat{a}({\bf t}-{\bf u}),\quad {\bf t},{\bf u}\in[0,1]^d.$$
This equality follows from the assumption on $\hat{a}$ and the fact that ${\bf t}-{\bf u}\in[-1,1]^d.$
\end{proof}

\begin{fact}\label{denis fact} If distributions $f_1$ and $f_2$ are continuous (except, possibly, at $0$) functions, then so is the distribution $f_1+f_2.$ Furthermore, equality $f_3=f_1+f_2$ holds pointwise (except, possibly, at $0$).
\end{fact}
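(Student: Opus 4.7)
The plan is to interpret the hypothesis via the sheaf property of distributions and reduce the assertion to elementary linearity together with the fundamental lemma of the calculus of variations. More precisely, to say that a distribution $f$ on $\mathbb{R}^d$ ``is a continuous function except possibly at $0$'' means that the restriction of $f$ to the open set $\mathbb{R}^d\setminus\{0\}$ coincides with the distribution $T_g$ associated to some $g\in C(\mathbb{R}^d\setminus\{0\})$; i.e., $\langle f,\phi\rangle=\int g\phi$ for every $\phi\in C_c^\infty(\mathbb{R}^d\setminus\{0\})$. With this reformulation in hand, the proof essentially writes itself.

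First, I would fix continuous representatives $g_1,g_2\in C(\mathbb{R}^d\setminus\{0\})$ for $f_1$ and $f_2$ on the punctured space. For any test function $\phi\in C_c^\infty(\mathbb{R}^d\setminus\{0\})$, linearity of the distributional pairing gives
\[
\langle f_1+f_2,\phi\rangle=\langle f_1,\phi\rangle+\langle f_2,\phi\rangle=\int(g_1+g_2)\phi.
\]
Since $g_1+g_2$ is continuous on $\mathbb{R}^d\setminus\{0\}$, the distribution $f_1+f_2$ is represented there by this continuous function, establishing the first assertion. Denoting the continuous representative of $f_1+f_2$ by $f_3$, the relation $f_3=g_1+g_2$ on $\mathbb{R}^d\setminus\{0\}$ follows from the fundamental lemma of the calculus of variations, which guarantees that a continuous function on an open set is uniquely determined by the distribution it induces. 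Thus at every point $x\neq 0$ one has $f_3(x)=g_1(x)+g_2(x)=f_1(x)+f_2(x)$, which is the claimed pointwise equality.

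There is no real technical obstacle here: the argument is purely bookkeeping between the distributional and pointwise interpretations. The only point worth flagging is that the restriction of a distribution to an open set is well defined (via duality with test functions supported in that set), so both statements of the fact really are statements about the punctured space $\mathbb{R}^d\setminus\{0\}$; no assumption is needed on the behavior of $f_1$ or $f_2$ near the origin.
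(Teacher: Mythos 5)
The paper states this as a \emph{Fact} without proof, treating it as self-evident, and your argument supplies exactly the bookkeeping the authors left implicit: restriction of distributions to the open set $\mathbb{R}^d\setminus\{0\}$, linearity of the pairing, and the du~Bois-Reymond lemma to identify the unique continuous representative. Your proof is correct and is the natural (essentially the only) argument; the slight notational point that in the paper's usage $f_3$ is given a priori as a distribution known to coincide distributionally with $f_1+f_2$, rather than introduced as the representative, does not affect the substance.
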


\begin{lem}\label{homegeneous compute lemma} Let $d>2.$ Let $g\in C^{\infty}(\mathbb{S}^{d-1})$ and set
$$h({\bf t})=g(\frac{{\bf t}}{|{\bf t}|})|{\bf t}|^{-2},\quad {\bf t}\in\mathbb{R}^d.$$
There is an $a\in l_{\infty}(\mathbb{Z}^d)$ such that $\hat{a}=(2\pi)^{\frac{d}{2}}\mathcal{F}^{-1}h$ on $[-1,1]^d$ and such that
$$a({\bf n})=h({\bf n})+O(|{\bf n}|^{-4}),\quad 0\neq {\bf n}\in\mathbb{Z}^d.$$
\end{lem}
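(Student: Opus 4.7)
\smallskip

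\textbf{Proposal.} The plan is to build $a$ as the Fourier coefficients of a smooth truncation of $(2\pi)^{d/2}\mathcal{F}^{-1}h$ to one period of the torus, and then to extract the asymptotic $a(\mathbf n)=h(\mathbf n)+O(|\mathbf n|^{-4})$ by a Taylor expansion at $\mathbf n$ that exploits vanishing moments of the cutoff. The key structural facts to use are: (i) because $d>2$, the distribution $\mathcal{F}^{-1}h$ is an honest, locally integrable function on $\mathbb{R}^d$, smooth away from $0$ and homogeneous of degree $-(d-2)$, hence with an integrable singularity at the origin; (ii) since $h$ is homogeneous of degree $-2$ and smooth off the origin, its partial derivatives of order $k$ are homogeneous of degree $-2-k$ and so are $O(|\mathbf t|^{-2-k})$ away from the origin.

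Concretely, I fix $\chi\in C^\infty_c(\mathbb R^d)$ with $\chi\equiv 1$ on $[-1,1]^d$ and $\operatorname{supp}\chi\subset(-\pi,\pi)^d$, and set $F:=\chi\cdot (2\pi)^{d/2}\mathcal{F}^{-1}h$. Then $F\in L^1(\mathbb R^d)$ is supported in $(-\pi,\pi)^d$, and I define
$$
a(\mathbf n):=(2\pi)^{-d}\int_{[-\pi,\pi]^d}F(\mathbf t)\,e^{-i\mathbf n\cdot\mathbf t}\,d\mathbf t,\qquad \mathbf n\in\mathbb Z^d.
$$
By construction $a\in\ell_\infty(\mathbb Z^d)$, and $\hat a=F$ as a distribution on $\mathbb T^d$; since $\chi\equiv 1$ on $[-1,1]^d$, this yields $\hat a=(2\pi)^{d/2}\mathcal{F}^{-1}h$ on $[-1,1]^d$, the first required property. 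A direct Fourier computation (unfolding $F$ to $\mathbb R^d$ and using the convolution theorem, together with $\mathcal{F}F_0=(2\pi)^{d/2}h$ where $F_0=(2\pi)^{d/2}\mathcal{F}^{-1}h$) gives
$$
a(\mathbf n)=(2\pi)^{-d/2}\,(\mathcal{F}\chi*h)(\mathbf n).
$$
Since $\chi(0)=1$ implies $\int_{\mathbb R^d}\mathcal{F}\chi(\mathbf s)\,d\mathbf s=(2\pi)^{d/2}$, subtracting $h(\mathbf n)$ gives the identity
$$
a(\mathbf n)-h(\mathbf n)=(2\pi)^{-d/2}\int_{\mathbb R^d}\mathcal{F}\chi(\mathbf s)\bigl[h(\mathbf n-\mathbf s)-h(\mathbf n)\bigr]\,d\mathbf s,
$$
which is the object to be estimated.

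I then split the integral into $|\mathbf s|\le |\mathbf n|/2$ and $|\mathbf s|>|\mathbf n|/2$. On the outer region, the Schwartz decay of $\mathcal{F}\chi$ beats any polynomial in $|\mathbf n|$, and the pointwise bound $|h(\mathbf n-\mathbf s)|\lesssim|\mathbf n-\mathbf s|^{-2}$ (locally integrable near its singular point) gives a contribution that is $O_N(|\mathbf n|^{-N})$ for every $N$. On the inner region $|\mathbf s|\le|\mathbf n|/2$, the segment from $\mathbf n$ to $\mathbf n-\mathbf s$ stays at distance $\ge|\mathbf n|/2$ from the origin, so $h$ is smooth there and I Taylor expand
$$
h(\mathbf n-\mathbf s)=h(\mathbf n)-\mathbf s\cdot\nabla h(\mathbf n)+R(\mathbf n,\mathbf s),\qquad |R(\mathbf n,\mathbf s)|\le C|\mathbf s|^2|\mathbf n|^{-4}.
$$
The crucial point is that $\chi\equiv 1$ near $0$ forces $\partial_j\chi(0)=0$, hence $\int_{\mathbb R^d}s_j\,\mathcal{F}\chi(\mathbf s)\,d\mathbf s=0$; restricting to $|\mathbf s|\le|\mathbf n|/2$ changes this by a Schwartz tail, which again is $O_N(|\mathbf n|^{-N})$. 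Combined with $|\nabla h(\mathbf n)|\lesssim|\mathbf n|^{-3}$, the first-order term is negligible, while the remainder contributes $\lesssim|\mathbf n|^{-4}\int|\mathbf s|^2|\mathcal{F}\chi(\mathbf s)|\,d\mathbf s=O(|\mathbf n|^{-4})$.

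The only real obstacle is bookkeeping the $|\mathbf s|>|\mathbf n|/2$ tail, where $\mathbf n-\mathbf s$ may cross the singular set of $h$; but integrability of $|\cdot|^{-2}$ near $0$ (courtesy of $d>2$) combined with the arbitrarily fast decay of $\mathcal{F}\chi$ makes this contribution negligible compared with $|\mathbf n|^{-4}$. Everything else is a clean application of vanishing moments.
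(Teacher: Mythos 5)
Your proposal is correct and shares the paper's starting point — multiply $(2\pi)^{d/2}\mathcal{F}^{-1}h$ by a compactly supported cutoff $\chi$ that is $1$ on $[-1,1]^d$ and define $a(\mathbf n)$ as its Fourier coefficients — but the two proofs diverge in how the $O(|\mathbf n|^{-4})$ error is extracted. The paper observes that the error is exactly $\pm\mathcal F\psi(\mathbf n)$ with $\psi:=(1-\chi)\cdot\mathcal{F}^{-1}h$, which is smooth on all of $\mathbb R^d$ (the singularity of $\mathcal{F}^{-1}h$ at the origin is killed by the cutoff) and homogeneous of degree $2-d$ at infinity; applying the identity $|\mathbf t|^4\,\mathcal F\psi=\mathcal F(\Delta^2\psi)$ together with $\Delta^2\psi\in L^1(\mathbb R^d)$ and the trivial bound $\|\mathcal F(\cdot)\|_\infty\le\|\cdot\|_1$ gives the $|\mathbf n|^{-4}$ decay in one stroke. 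You instead rewrite the error as the convolution $(2\pi)^{-d/2}\int\mathcal F\chi(\mathbf s)\bigl[h(\mathbf n-\mathbf s)-h(\mathbf n)\bigr]\,d\mathbf s$, split into near and far regions, Taylor-expand $h$ to second order on the near region, and exploit the vanishing of $\int s_j\,\mathcal F\chi\,d\mathbf s$ (from $\partial_j\chi(0)=0$) to kill the first-order term. Your tail and remainder estimates are sound, including the use of $d>2$ to handle the local integrability of $|\cdot|^{-2}$ near where $\mathbf n-\mathbf s$ may pass through the origin. The paper's $\Delta^2$ trick is shorter and transfers the whole difficulty to a one-line integrability check; your argument is more hands-on but makes the mechanism of moment cancellation explicit and requires no distributional bookkeeping about $\mathcal F((1-\chi)\mathcal{F}^{-1}h)$ being a pointwise-defined continuous function (which the paper deals with via its Fact~\ref{denis fact}). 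Both are valid; the tradeoff is brevity versus transparency.
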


\begin{proof} Let $\phi\in C^{\infty}_c(\mathbb{R}^d)$ have support in $[-2,2]^2$ and satisfy $\phi=1$ on $[-1,1]^d.$ Set
$$\hat{a}({\bf t})=(2\pi)^{\frac{d}{2}}\phi({\bf t})\cdot(\mathcal{F}^{-1}h)({\bf t}),\quad {\bf t}\in[-\pi,\pi]^d.$$
We have
\begin{align*}
  a({\bf n})& =(2\pi)^{-d}\int_{[-\pi,\pi]^d}\hat{a}({\bf t})e^{-i {\bf t}\cdot{\bf n}} d{\bf t}\\
&=(2\pi)^{-\frac{d}{2}}\int_{\mathbb{R}^d}\phi({\bf t})(\mathcal{F}^{-1}h)({\bf t})e^{-i {\bf t}\cdot{\bf n}}d{\bf t}=(\mathcal{F}(\phi\cdot\mathcal{F}^{-1}h))({\bf n}).
\end{align*}

Since $h$ is a smooth (except at $0$) homogeneous function of degree $-2,$ it follows from Proposition 2.4.8 and Exercise 2.3.9 (d) in \cite{Grafakos} that $\mathcal{F}^{-1}h$ is a smooth (except at $0$) homogeneous function of degree $2-d.$ By Theorem 2.3.21 in \cite{Grafakos}, $\mathcal{F}(\phi\cdot\mathcal{F}^{-1}h)$ is a smooth function. Using the equality $\mathcal{F}(\mathcal{F}^{-1}h)=h,$ we conclude that
$$\mathcal{F}(\phi\cdot\mathcal{F}^{-1}h)-h=\mathcal{F}((1-\phi)\cdot\mathcal{F}^{-1}h)$$
in the sense of distributions. Since each term on the left side is a continuous (except, possibly, at $0$) function, it follows from Fact \ref{denis fact} that the right hand side is also a continuous (except, possibly, at $0$) function and the equality holds pointwise.

The function $\psi=(1-\phi)\cdot\mathcal{F}^{-1}h$ is smooth. It is clear that $\Delta^2\psi\in L_1(\mathbb{R}^d).$ Denote for brevity $u({\bf t})=|{\bf t}|^4,$ $t\in\mathbb{R}^d.$ We have
$$u\cdot\mathcal{F}\psi=\mathcal{F}(\Delta^2\psi).$$
Thus,
$$\|u\cdot \mathcal{F}\psi\|_{\infty}=\|\mathcal{F}(\Delta^2\psi)\|_{\infty}\leq\|\Delta^2\psi\|_1<\infty.$$
Since $u\cdot \mathcal{F}\psi$ is bounded and since $\mathcal{F}\psi$ is continous (except, possibly, at $0$), it follows that
$$\mathcal{F}\psi({\bf t})=O(|{\bf t}|^{-4}),\quad {\bf t}\in\mathbb{R^d}.$$
In particular,
$$(\mathcal{F}\psi)({\bf n})=O(|{\bf n}|^{-4}),\quad 0\neq{\bf n}\in\mathbb{Z}^d.$$
Thus,
$$(\mathcal{F}(\phi\cdot\mathcal{F}^{-1}h))({\bf n})=h({\bf n})-(\mathcal{F}\psi)({\bf n})=h({\bf n})+O(|{\bf n}|^{-4}),\quad 0\neq{\bf n}\in\mathbb{Z}^d.$$
This completes the proof.
\end{proof}

In the following lemma we slightly abuse the notation and denote
$$(-\Delta_{\mathbb{T}^d})^{-\frac12}\stackrel{def}{=}(-\Delta_{\mathbb{T}^d})^{-\frac12}\cdot (1-P),$$
where $P:L_2(\mathbb{T}^d)\to L_2(\mathbb{T}^d)$ is the orthogonal projection onto the subspace of constants.

\begin{lem}\label{plane minus torus lemma} Let $(f_n)_{n=1}^N\subset L_\infty(\mathbb{R}^d)$ be supported on $[0,1]^d$ and let $(g_n)_{n=1}^N\subset C^{\infty}(\mathbb{S}^{d-1}).$ Then for
$$T=\sum_{n=1}^N\pi_1(f_n)\pi_2(g_n),\quad S=\sum_{n=1}^NM_{f_n}g_n\big(\frac{-i\nabla_{\mathbb{T}^d}}{\sqrt{-\Delta_{\mathbb{T}^d}}}\big).$$
one has
$$
\limsup_{t\to\infty} t^{\frac1d}\mu\Big(t,T(-\Delta)^{-\frac12}\Big) = \limsup_{t\to\infty} t^{\frac1d}\mu\Big(t,S(-\Delta_{\mathbb{T}^d})^{-\frac12}\Big)
$$
and
$$
\liminf_{t\to\infty} t^{\frac1d}\mu\Big(t,T(-\Delta)^{-\frac12}\Big) = \liminf_{t\to\infty} t^{\frac1d}\mu\Big(t,S(-\Delta_{\mathbb{T}^d})^{-\frac12}\Big).
$$
\end{lem}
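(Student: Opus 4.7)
The plan is to square both operators: by the identity $\mu(t,X)^2 = \mu(t,XX^{\ast})$, the lemma is equivalent to the same statement with $T(-\Delta)^{-1/2}$ and $S(-\Delta_{\mathbb T^d})^{-1/2}$ replaced by $T(-\Delta)^{-1}T^{\ast}$ and $S(-\Delta_{\mathbb T^d})^{-1}S^{\ast}$, and $t^{1/d}$ replaced by $t^{2/d}$. The advantage of squaring is that the resulting homogeneous symbol has degree $-2$, which matches precisely the hypothesis of Lemma~\ref{homegeneous compute lemma}. That lemma supplies sequences on $\mathbb Z^d$ realising homogeneous $\mathbb R^d$-symbols of degree $-2$ with an $O(|k|^{-4})$ discrepancy, and Lemma~\ref{passing to torus lemma} will then convert the squared Euclidean operator into a toric one on $L_2([0,1]^d)$.

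Concretely, I would first expand
$$T(-\Delta)^{-1}T^{\ast} = \sum_{n,m=1}^N \pi_1(f_n)\, H_{n,m}(-i\nabla)\, \pi_1(\overline{f_m}),\qquad H_{n,m}(s) := g_n(s/|s|)\overline{g_m(s/|s|)}|s|^{-2}.$$
Since $g_n\overline{g_m}\in C^\infty(\mathbb S^{d-1})$, Lemma~\ref{homegeneous compute lemma} yields $a_{n,m}\in\ell_\infty(\mathbb Z^d)$ with $\widehat{a_{n,m}} = (2\pi)^{d/2}\mathcal F^{-1}H_{n,m}$ on $[-1,1]^d$ and $a_{n,m}(k) = H_{n,m}(k) + O(|k|^{-4})$ for $k\ne 0$. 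Applying Lemma~\ref{passing to torus lemma} term by term rewrites
$$T(-\Delta)^{-1}T^{\ast}\big|_{L_2([0,1]^d)} = \sum_{n,m} M_{f_n}\, a_{n,m}(-i\nabla_{\mathbb T^d})\, M_{\overline{f_m}}\big|_{L_2([0,1]^d)}.$$
Since the rightmost factor $M_{\overline{f_m}}$ annihilates $L_2(\mathbb R^d)\ominus L_2([0,1]^d)$ (and analogously on $\mathbb T^d$), the singular values of $T(-\Delta)^{-1}T^{\ast}$ and $S(-\Delta_{\mathbb T^d})^{-1}S^{\ast}$ on their ambient Hilbert spaces coincide with those of their compressions to $L_2([0,1]^d)$. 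On the torus one has $S(-\Delta_{\mathbb T^d})^{-1}S^{\ast} = \sum_{n,m} M_{f_n}\, B_{n,m}(-i\nabla_{\mathbb T^d})\, M_{\overline{f_m}}$ with $B_{n,m}(k)=H_{n,m}(k)$ for $k\ne 0$ and $B_{n,m}(0)=0$, so on $L_2([0,1]^d)$ the two squared operators differ by $\sum_{n,m} M_{f_n}\, c_{n,m}(-i\nabla_{\mathbb T^d})\, M_{\overline{f_m}}$ with $c_{n,m} := a_{n,m} - B_{n,m}$.

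The symbol $c_{n,m}$ is bounded at the origin and $O(|k|^{-4})$ elsewhere, hence in $\ell_{d/4,\infty}(\mathbb Z^d)$; the Fourier multiplier $c_{n,m}(-i\nabla_{\mathbb T^d})$ is therefore in $\mathcal L_{d/4,\infty}$, and sandwiching by bounded multiplication operators preserves this class. Since $d/4 < d/2$, the inclusion $\mathcal L_{d/4,\infty} \subset (\mathcal L_{d/2,\infty})_0$ is automatic, so the difference of the squared operators lies in $(\mathcal L_{d/2,\infty})_0$. Lemma~\ref{bs sep lemma} with $p = d/2$ then yields the equality of $\limsup$ and $\liminf$ of $t^{2/d}\mu(t,\cdot)$ for the two squared operators, and taking square roots gives the lemma. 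The main obstacle is really only the bookkeeping around the compression to $L_2([0,1]^d)$ and verifying that the Euclidean and toric squared operators truly match on that subspace; the quantitative crux is the $O(|k|^{-4})$ decay from Lemma~\ref{homegeneous compute lemma}, which is exactly strong enough to push the remainder into the ideal $(\mathcal L_{d/2,\infty})_0$ that can be discarded at the $p = d/2$ level.
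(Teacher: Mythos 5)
Your proposal follows essentially the same route as the paper's proof: square to pass to $AA^{\ast}$ (the paper writes this as $|A^{\ast}|^{2}$, which is the same operator), expand in terms of the homogeneous degree-$-2$ symbols $h_{n_1,n_2}$, invoke Lemma~\ref{homegeneous compute lemma} and Lemma~\ref{passing to torus lemma} to identify the restricted operator on $L_2([0,1]^d)$, and absorb the $O(|\mathbf n|^{-4})$ discrepancy into $\mathcal L_{d/4,\infty}\subset(\mathcal L_{d/2,\infty})_0$ before applying Lemma~\ref{bs sep lemma} at $p=d/2$ and taking square roots. The only cosmetic difference is that the paper records the compression step via $\mu^{2}(A)=\mu(|A^{\ast}|^{2}|_{L_2([0,1]^d)})$, which implicitly uses positivity/self-adjointness to confine both domain and range to $L_2([0,1]^d)$; your phrasing emphasises only the rightmost multiplication factor, but since $AA^{\ast}$ is positive self-adjoint and vanishes on $L_2([0,1]^d)^{\perp}$, its range is automatically contained in $L_2([0,1]^d)$, so your argument is complete as stated.
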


\begin{proof} Denote for brevity
$$A=T(-\Delta)^{-\frac12},\quad B=S(-\Delta_{\mathbb{T}^d})^{-\frac12}.$$
Then
\begin{align*}
|A^{\ast}|^2 & =\sum_{n_1,n_2=1}^N\pi_1(f_{n_1})h_{n_1,n_2}(-i\nabla)\pi_1(\bar{f}_{n_2}) \,,\\
|B^{\ast}|^2 & =\sum_{n_1,n_2=1}^NM_{f_{n_1}}h_{n_1,n_2}(-i\nabla_{\mathbb{T}^d})M_{\bar{f}_{n_2}} \,,
\end{align*}
where
$$h_{n_1,n_2}({\bf t})=g_{n_1}(\frac{{\bf t}}{|{\bf t}|})\bar{g}_{n_2}(\frac{{\bf t}}{|{\bf t}|})|{\bf t}|^{-2},\quad {\bf t}\in\mathbb{R}^d.$$
Let $a_{n_1,n_2}\in\ell_\infty(\mathbb Z^d)$ such that
$$\hat{a}_{n_1,n_2}=(2\pi)^{\frac{d}{2}}\mathcal{F}^{-1} h_{n_1,n_2}\mbox{ on }[-1,1]^d,$$
and set
$$
X := \sum_{n_1,n_2=1}^NM_{f_{n_1}}a_{n_1,n_2}(-i\nabla_{\mathbb{T}^d})M_{\bar{f}_{n_2}} \,.
$$
Then, by Lemma \ref{passing to torus lemma}, we have
$$|A^{\ast}|^2|_{L_2([0,1]^d)}=X|_{L_2([0,1]^d)},$$
and therefore $\mu(|A^{\ast}|^2|_{L_2([0,1]^d)}) = \mu(X|_{L_2([0,1]^d)})$. Since each $f_n$ is supported on $[0,1]^d$, we have
\begin{align*}
\mu^2(A) = \mu(|A^\ast|^2) = \mu(|A^{\ast}|^2|_{L_2([0,1]^d)}) \
 \,,\qquad
\mu(X) = \mu(X|_{L_2([0,1]^d)}) \,,
\end{align*}
and consequently,
$$
\mu^2(A) = \mu(X) \,.
$$
By Lemma \ref{homegeneous compute lemma}, one can choose $a_{n_1,n_2}$ such that
$$a_{n_1,n_2}({\bf n})=h_{n_1,n_2}({\bf n})+o_{n_1,n_2}({\bf n}),\quad  o_{n_1,n_2}({\bf n})=O(|{\bf n}|^{-4}),\quad 0\neq {\bf n}\in\mathbb{Z}^d.$$
Thus, $X=Y+Z,$
$$Y:= \sum_{n_1,n_2=1}^NM_{f_{n_1}}h_{n_1,n_2}(-i\nabla_{\mathbb{T}^d})M_{\bar{f}_{n_2}},\quad Z := \sum_{n_1,n_2=1}^NM_{f_{n_1}}o_{n_1,n_2}(-i\nabla_{\mathbb{T}^d})M_{\bar{f}_{n_2}}.$$
It is immediate that
$$o_{n_1,n_2}(-i\nabla_{\mathbb{T}^d})\in\mathcal{L}_{\frac{d}{4},\infty}\subset(\mathcal{L}_{\frac{d}{2},\infty})_0.$$
Since $f_n$ is bounded for every $1\leq n\leq N,$ it follows that $Z\in(\mathcal{L}_{\frac{d}{2},\infty})_0.$ Note that $Y=|B^{\ast}|^2.$ Thus,
$$X-|B^{\ast}|^2\in(\mathcal{L}_{\frac{d}{2},\infty})_0.$$
By Lemma \ref{bs sep lemma}, the last relation implies
$$
\limsup_{t\to\infty} s^{\frac2d} \mu(s,X) = \limsup_{t\to\infty} s^{\frac2d} \mu(s,B)^2
$$
and
$$
\liminf_{t\to\infty} s^{\frac2d} \mu(s,X) = \liminf_{t\to\infty} s^{\frac2d} \mu(s,B)^2.
$$
Recalling that $\mu(s,X) = \mu(s,A)^2$, we deduce the assertion of the lemma.
\end{proof}


\subsection{Proof of Theorem \ref{main thm special} for $d>2$}

Combining the results of the previous two subsections, we are finally in position to prove the main result of this secion.

\begin{proof}[Proof of Theorem \ref{main thm special} for $d>2$]
	Suppose that $f_n\in C_c(\mathbb R^d),$ $1\leq n\leq N,$ are supported on $[0,1]^d$ and let $g_n\in C^{\infty}(\mathbb{S}^{d-1}),$ $1\leq n\leq N.$ Then, identifying $f_n$ with the $2\pi$-periodic extension of $f_n|_{[-\pi,\pi]^d}$, we deduce from Theorem \ref{main thm special torus} that
$$
\lim_{t\to\infty}t^{\frac1d}\mu\Big(t,\Big(\sum_{n=1}^NM_{f_n}g_n\big(\frac{-i\nabla_{\mathbb{T}^d}}{\sqrt{-\Delta_{\mathbb{T}^d}}}\big)\Big)(-\Delta_{\mathbb{T}^d})^{-\frac12}\Big)
= d^{-\frac1d} (2\pi)^{-1} \left\| \sum_{n=1}^N f_n\otimes g_n \right\|_d .
$$
By Lemma \ref{plane minus torus lemma} and the fact that
$$
\left\| \sum_{n=1}^N f_n\otimes g_n \right\|_d = \|{\rm sym}(T)\|_d \,,
$$
this implies that
$$
\lim_{t\to\infty}t^{\frac1d}\mu\Big(t,\Big(\sum_{n=1}^N\pi_1(f_n)\pi_2(g_n)\Big)(-\Delta)^{-\frac12}\Big) = d^{-\frac1d} (2\pi)^{-1} \|{\rm sym}(T)\|_d \,.
$$
By applying dilations and translations, we can remove the restriction on the support of $f_n$'s. By Lemmas \ref{d>2 useful cwikel} and \ref{bs limit lemma}, we can remove the restriction on the smoothness of the $g_n$'s. 

The mapping
$$t\to |t|^{-1}-(1+|t|^2)^{-\frac12},\quad t\in\mathbb{R}^d,$$
belongs to $(L_{d,\infty}(\mathbb{R}^d))_0.$ By \cite[Subsection 5.3]{BKS} (see also \cite[Corollary 1.2]{LeSZ}), we have
$$\pi_1(f_n)(-\Delta)^{-\frac12}-\pi_1(f_n)(1-\Delta)^{-\frac12}\in(\mathcal{L}_{d,\infty})_0,\quad 1\leq n\leq N,$$
and, therefore,
$$\sum_{n=1}^N\pi_1(f_n)\pi_2(g_n)(-\Delta)^{-\frac12}-\sum_{n=1}^N\pi_1(f_n)\pi_2(g_n)(1-\Delta)^{-\frac12}\in(\mathcal{L}_{d,\infty})_0.$$
Hence, by Lemma \ref{bs sep lemma}, we can replace $-\Delta$ with $1-\Delta.$ This completes the proof of Theorem \ref{main thm special} for $d>2$.
\end{proof}


\section{Proof of Theorem \ref{main thm} in a special case. Case $d=2$}

In this section we prove Theorem \ref{main thm special} for $d=2$.


\subsection{The set $\mathcal X_2$}

\begin{defi}\label{x2 def} Let $\mathcal{X}_2$ be the set of all compactly supported $f\in L_{\infty}(\mathbb{R}^2)$ such that, for every Borel set $I\subset\mathbb{S}^1,$ one has
$$
\lim_{t\to\infty}t^{\frac12}\mu\Big(t,\pi_1(f)\pi_2(\chi_I)(1-\Delta)^{-\frac12}\Big)=2^{-\frac12} (2\pi)^{-1} m(I)^{\frac12}\|f\|_2.
$$
\end{defi}

In this subsection, we frequently use the unitary operator ${\rm Shift}_c:L_2(\mathbb{R}^2)\to L_2(\mathbb{R}^2),$ $c\in\mathbb{R}^2,$ defined by setting
$$({\rm Shift}_cf)(u)=f(u+c),\quad f\in L_2(\mathbb{R}^2).$$ 
We also frequently use the dilation operator $\sigma_t,$ $0<t\in\mathbb{R},$ defined by setting
$$(\sigma_tf)(u)=f(\frac{u}{t}),\quad f\in L_2(\mathbb{R}^2).$$

\begin{lem}\label{d=2 preliminary lemma} Let $(f_l)_{l\geq0}\in\mathcal{X}_2.$
\begin{enumerate}
\item\label{dpla} if $(f_l)_{l=0}^n$ are pairwise disjointly supported, then $\sum_{l=0}^nf_l\in\mathcal{X}_2;$
\item\label{dplb} if $c_l\in\mathbb{R}^2,$ then ${\rm Shift}_{c_l}(f_l)\in\mathcal{X}_2;$
\item\label{dplc} if $t>0,$ then $\sigma_tf_l\in\mathcal{X}_2;$
\item\label{dpld} if $(f_l)_{l\geq0}$ are supported on a compact set $K$ and $f_l\to f\in L_{\infty}(\mathbb{R}^2)$ in $(L_2\log L)(\mathbb{R}^2),$ then $f\in\mathcal{X}_2;$
\end{enumerate}	
\end{lem}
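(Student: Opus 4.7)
The plan is to handle each of the four parts using different ingredients, all building on the commutator estimates from Theorem \ref{commutator theorem} and the abstract limit lemmas from Section 3. For (\ref{dpla}), given pairwise disjoint supports $E_l := \mathrm{supp}(f_l)$, I would introduce the projections $p_l := \pi_1(\chi_{E_l})$ for $0\leq l\leq n$ together with $p_\infty := 1 - \sum_l p_l$, which form a pairwise orthogonal family of projections summing to the identity. Writing $f := \sum_l f_l$ and $A := \pi_1(f)\pi_2(\chi_I)(1-\Delta)^{-\frac12}$, one checks $p_l A = \pi_1(f_l)\pi_2(\chi_I)(1-\Delta)^{-\frac12}$ while $p_\infty A = 0$. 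The key cancellation is $[A, p_l] = \pi_1(f)[\pi_2(\chi_I)(1-\Delta)^{-\frac12}, p_l] \in (\mathcal{L}_{2,\infty})_0$, which follows from Theorem \ref{commutator theorem}. Invoking Lemma \ref{second abstract limit lemma} and using the disjoint-support identity $\|f\|_2^2 = \sum_l \|f_l\|_2^2$ then yields the claim.

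For (\ref{dplb}), I would use that the translation $U := \mathrm{Shift}_{c_l}$ is unitary on $L_2(\mathbb{R}^2)$ and satisfies $U \pi_1(f_l) U^{-1} = \pi_1(\mathrm{Shift}_{c_l} f_l)$, while $U$ commutes with every Fourier multiplier, in particular with $\pi_2(\chi_I)$ and $(1-\Delta)^{-\frac12}$. Hence $\pi_1(\mathrm{Shift}_{c_l} f_l)\pi_2(\chi_I)(1-\Delta)^{-\frac12}$ is unitarily equivalent to $\pi_1(f_l)\pi_2(\chi_I)(1-\Delta)^{-\frac12}$, and the invariance $\|\mathrm{Shift}_{c_l}f_l\|_2 = \|f_l\|_2$ gives the correct limit constant.

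For (\ref{dplc}), I would introduce the unitary $V$ on $L_2(\mathbb{R}^2)$ defined by $(Vh)(u) := t^{-1}h(u/t)$. A direct calculation gives $V\pi_1(f_l)V^{-1} = \pi_1(\sigma_t f_l)$, $V\pi_2(\chi_I)V^{-1} = \pi_2(\chi_I)$ (since $-i\nabla/\sqrt{-\Delta}$ is homogeneous of degree $0$), and $V(1-\Delta)^{-\frac12}V^{-1} = (1-t^2\Delta)^{-\frac12}$, so by unitary invariance $\pi_1(\sigma_t f_l)\pi_2(\chi_I)(1-t^2\Delta)^{-\frac12}$ has the same singular values as $\pi_1(f_l)\pi_2(\chi_I)(1-\Delta)^{-\frac12}$. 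The main technical point, and the chief obstacle in the whole lemma, is to compare $(1-t^2\Delta)^{-\frac12}$ with $t^{-1}(1-\Delta)^{-\frac12}$ after multiplication by $\pi_1(\sigma_t f_l)\pi_2(\chi_I)$: the Fourier symbol $(1+t^2|\xi|^2)^{-\frac12} - t^{-1}(1+|\xi|^2)^{-\frac12}$ is bounded on $\mathbb{R}^2$ and $O(|\xi|^{-3})$ at infinity, hence lies in $L_2(\mathbb{R}^2)$, and the Kato--Seiler--Simon inequality places the corresponding operator in $\mathcal{L}_2 \subset (\mathcal{L}_{2,\infty})_0$. Combining this with Lemma \ref{bs sep lemma} and the scaling identity $\|\sigma_t f_l\|_2 = t\|f_l\|_2$ valid in dimension $2$ yields $\sigma_t f_l\in\mathcal{X}_2$.

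For (\ref{dpld}), Solomyak's bound (Theorem \ref{solomyak thm}) applied to $f-f_l$, all supported in $K$, gives $\pi_1(f_l)\pi_2(\chi_I)(1-\Delta)^{-\frac12} \to \pi_1(f)\pi_2(\chi_I)(1-\Delta)^{-\frac12}$ in $\mathcal{L}_{2,\infty}$. Lemma \ref{bs limit lemma} then transfers the existence and value of the asymptotic limit, while the $L_2\log L$ convergence on the compact set $K$ implies $L_2$ convergence, so $\|f_l\|_2 \to \|f\|_2$, closing the argument.
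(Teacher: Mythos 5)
Your proposal is correct and follows essentially the same strategy as the paper for all four parts: apply Lemma \ref{second abstract limit lemma} with the projections onto the supports plus a complementary projection for part (\ref{dpla}), conjugate by translations and dilations for parts (\ref{dplb}) and (\ref{dplc}) and handle the resolvent correction as a Hilbert--Schmidt remainder, and use Theorem \ref{solomyak thm} together with Lemma \ref{bs limit lemma} for part (\ref{dpld}). The only cosmetic difference is in part (\ref{dplc}), where you use the normalized (unitary) dilation $V=t^{-1}\sigma_t$ and compare $(1-t^2\Delta)^{-1/2}$ with $t^{-1}(1-\Delta)^{-1/2}$, whereas the paper uses $\sigma_t$ directly and compares $(1-t^{-2}\Delta)^{-1/2}$ with $t(1-\Delta)^{-1/2}$; these are the same computation up to taking reciprocals of $t$.
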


To be more precise, by the assumption that $f_l$ and $f_{l'}$ are pairwise disjointly supported we mean that
$$
m( {\rm supp} f_l \cap {\rm supp} f_{l'}) =0 \,.
$$

\begin{proof}
	Fix a Borel set $I\subset\mathbb{S}^1$.
	
	For the proof of the first assertion we want to apply Lemma \ref{second abstract limit lemma} and set
$$A=\pi_1(\sum_{l=0}^nf_l)\pi_2(\chi_I)(1-\Delta)^{-\frac12} \,.
$$
Since $\pi_2(\chi_I)$ commutes with $(1-\Delta)^{-\frac12}$ and $\sum_{l=0}^nf_l$ is a  bounded, compactly supported function, we deduce from Theorem \ref{solomyak thm} that $A \in \mathcal L_{2,\infty}$. Let
$$p_l=\pi_1(\chi_{{\rm supp}(f_l)}),\quad 0\leq l\leq n,$$
and let $p_{-1}=1-\sum_{0\leq l<k} p_l$.
The first assumption in Lemma \ref{second abstract limit lemma} follows from the equality
$$p_lA=\pi_1(f_l)\pi_2(\chi_I)(1-\Delta)^{-\frac12},\quad 0\leq l\leq n,$$
and the assumption $f_l\in\mathcal{X}_2,$ $0\leq l\leq n$, as well as $p_{-1}A=0$. Since $\pi_1(\sum_{k=0}^nf_k)$ commutes with $p_l,$ $-1\leq l\leq n,$ the second assumption in Lemma \ref{second abstract limit lemma} follows from Theorem \ref{commutator theorem}. Applying Lemma \ref{second abstract limit lemma}, we complete the proof of the first assertion.

It is immediate that
$$\pi_1({\rm Shift}_{c_l}(f_l))\pi_2(\chi_I)(1-\Delta)^{-\frac12}={\rm Shift}_{c_l}\cdot \pi_1(f_l)\pi_2(\chi_I)(1-\Delta)^{-\frac12}\cdot {\rm Shift}_{-c_l}.$$
Therefore,
$$\mu(\pi_1({\rm Shift}_{c_l}(f_l))\pi_2(\chi_I)(1-\Delta)^{-\frac12})=\mu(\pi_1(f_l)\pi_2(\chi_I)(1-\Delta)^{-\frac12})$$
The second assertion follows from the latter equality and the assumption $f_l\in\mathcal{X}_2.$

To see the third assertion, note that
$$\pi_1(\sigma_tf_l)\pi_2(\chi_I)(1-\Delta)^{-\frac12}=\sigma_t\cdot \pi_1(f_l)\pi_2(\chi_I)(1-t^{-2}\Delta)^{-\frac12}\cdot\sigma_t^{-1}.$$
Thus,
$$\mu(\pi_1(\sigma_tf_l)\pi_2(\chi_I)(1-\Delta)^{-\frac12})=\mu(\pi_1(f_l)\pi_2(\chi_I)(1-t^{-2}\Delta)^{-\frac12}).$$
Note that, by a straightforward evaluation of the Hilbert--Schmidt norm,
$$\pi_1(f_l)\pi_2(\chi_I)(1-t^{-2}\Delta)^{-\frac12}-t\pi_1(f_l)\pi_2(\chi_I)(1-\Delta)^{-\frac12}\in\mathcal{L}_2\subset(\mathcal{L}_{2,\infty})_0.$$
Using $f_l\in\mathcal X_2$ and Lemma \ref{bs sep lemma}, we deduce
$$\lim_{t\to\infty}t^{\frac12}\mu(\pi_1(\sigma_tf_l)\pi_2(\chi_I)(1-\Delta)^{-\frac12})=t\cdot m(I)^{\frac12}\|f_l\|_2=m(I)^{\frac12}\|\sigma_tf_l\|_2.$$

The fourth assertion follows from Theorem \ref{solomyak thm} and Lemma \ref{bs limit lemma}.
\end{proof}

The following lemma contains the main idea of the proof of Theorem \ref{main thm special} for $d=2$. The rest of the argument consists either of verification of its conditions or of its applications.

\begin{lem}\label{d=2 radial lemma}
	If $f\in L_\infty(\mathbb R^2)$ is compactly supported and rotation invariant, then $f\in \mathcal{X}_2.$
\end{lem}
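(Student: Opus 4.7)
The strategy is to apply Lemma~\ref{third abstract limit lemma} with $p=2$, $n=1$, $A:=\pi_1(f)(1-\Delta)^{-\frac12}$, and the spectral measure $\nu$ on $[0,1)$ defined by $\nu(J):=\pi_2(\chi_{\phi(J)})$, where $\phi(s):=e^{2\pi is}$ is the standard parametrization of $\mathbb{S}^1$. Since $\pi_2(\chi_{\phi(J)})$ commutes with $(1-\Delta)^{-\frac12}$, the product $A\nu(J)=\pi_1(f)\pi_2(\chi_{\phi(J)})(1-\Delta)^{-\frac12}$ is precisely the operator whose singular-value asymptotic defines membership in $\mathcal{X}_2$. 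Since $m(\phi(J))=2\pi m(J)$, a successful application of Lemma~\ref{third abstract limit lemma} will convert the unconstrained asymptotic of $A$ into the defining formula for $\mathcal{X}_2$.

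The three hypotheses of Lemma~\ref{third abstract limit lemma} are verified as follows. (1)~Congruent subintervals $J_1,J_2\subset[0,1)$ correspond under $\phi$ to arcs of $\mathbb{S}^1$ of equal length, related by a planar rotation $R$; its unitary implementation $U$ on $L_2(\mathbb{R}^2)$ commutes with $(1-\Delta)^{-\frac12}$ and, by the rotational invariance of $f$, with $\pi_1(f)$, yielding $U^{-1}AU=A$. The standard conjugation rule $U^{-1}\pi_2(g)U=\pi_2(g\circ R^{-1})$ then gives $U^{-1}\nu(J_2)U=\nu(J_1)$. (2)~Theorem~\ref{commutator theorem} in dimension $d=2$, applied with $g:=\chi_{\phi(J)}\in L_\infty(\mathbb{S}^1)$, directly yields $[A,\nu(J)]\in(\mathcal{L}_{2,\infty})_0$. (3)~For $f\in C_c^\infty(\mathbb{R}^2)$, Lemma~\ref{d=2 useful limsup} gives
$$\limsup_{t\to\infty}t^{\frac12}\mu(t,A\nu(J))\leq c\|f\|_2\|\chi_{\phi(J)}\|_{L_4(\mathbb{S}^1)}\leq c'\|f\|_2 m(J)^{\frac14};$$
for a general radial bounded compactly supported $f$, approximate by radial $C_c^\infty$ functions in $L_2\log L$, control the resulting error via Theorem~\ref{solomyak thm}, and transfer the bound using the quasi-triangle inequality for $\limsup t^{\frac12}\mu(t,\cdot)$. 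This produces hypothesis~(3) with $\psi(r):=c''\|f\|_2 r^{\frac14}$, which satisfies $\psi(0^+)=0$.

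What remains -- and is the genuine obstacle -- is the critical-exponent Weyl asymptotic
$$\lim_{t\to\infty}t^{\frac12}\mu(t,\pi_1(f)(1-\Delta)^{-\frac12})=(4\pi)^{-\frac12}\|f\|_2,$$
whose existence is a precondition for Lemma~\ref{third abstract limit lemma}. The classical route is through $A^*A=(1-\Delta)^{-\frac12}M_{|f|^2}(1-\Delta)^{-\frac12}$, whose principal symbol $|f(\mathbf{x})|^2(1+|\mathbf{p}|^2)^{-1}$ produces, after polar integration in $\mathbf{p}$, the limit $\lim_{\lambda\to 0^+}\lambda N(\lambda,A^*A)=\|f\|_2^2/(4\pi)$ via the Birman--Solomyak Weyl law for negative-order pseudodifferential operators. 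In the present rotation-invariant setting one can alternatively exploit the angular decomposition $L_2(\mathbb{R}^2)=\bigoplus_{n\in\mathbb{Z}}L_2(\mathbb{R}_+,r\,dr)\otimes e^{in\theta}$, in which both $\pi_1(f)$ and $(1-\Delta)^{-\frac12}$ are block-diagonal, and assemble the joint asymptotic from the one-dimensional Bessel-type blocks. Once this limit is in hand, Lemma~\ref{third abstract limit lemma} delivers
$$\lim_{t\to\infty}t^{\frac12}\mu(t,A\nu(J))=m(J)^{\frac12}(4\pi)^{-\frac12}\|f\|_2=2^{-\frac12}(2\pi)^{-1}m(\phi(J))^{\frac12}\|f\|_2,$$
which is exactly the formula defining $\mathcal{X}_2$ for $I=\phi(J)$.
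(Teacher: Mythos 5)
Your proposal takes the same route as the paper: both reduce the lemma to Lemma~\ref{third abstract limit lemma} with $p=2$, $A=\pi_1(f)(1-\Delta)^{-\frac12}$, and the spectral measure $\nu(I)=\pi_2(\chi_{2\pi I\bmod 2\pi})$, and both verify hypotheses (1), (2), (3) via rotation invariance, Theorem~\ref{commutator theorem}, and Lemma~\ref{d=2 useful limsup}, respectively. The one substantive difference is the precondition, the Weyl asymptotic $\lim_{t\to\infty}t^{\frac12}\mu(t,\pi_1(f)(1-\Delta)^{-\frac12})=(4\pi)^{-\frac12}\|f\|_2$: you flag it as ``the genuine obstacle'' and sketch two possible routes (a Birman--Solomyak negative-order $\Psi$DO Weyl law via the symbol of $A^*A$, or an angular Bessel decomposition) without carrying either out, whereas the paper derives it for $f\in C_c(\mathbb R^2)$ via the Birman--Schwinger principle and Dirichlet--Neumann bracketing for $-\Delta+1-\alpha f^2$ (citing \cite[Theorem 4.28]{FrLaWe}), and then extends to compactly supported $f\in(L_2\log L)(\mathbb R^2)$ using Theorem~\ref{solomyak thm} and Lemma~\ref{bs limit lemma}. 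So your identification of what needs proving is exactly right, but that step is left at the level of a sketch; the paper closes it by citation and approximation. A small point in your favor: you correctly note that Lemma~\ref{d=2 useful limsup} is stated for $f\in C^\infty_c(\mathbb R^2)$ and supply the approximation argument (via Theorem~\ref{solomyak thm} and the quasi-triangle inequality) needed to transfer the $\limsup$ bound to bounded compactly supported $f$; the paper applies Lemma~\ref{d=2 useful limsup} directly and leaves this step implicit.
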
	
\begin{proof}
	Set
	$$A=\pi_1(f)(1-\Delta)^{-\frac12} \,.$$
	We claim that
	$$\lim_{t\to\infty}t^{\frac12}\mu(t,A)=(4\pi)^{-\frac12} \|f\|_2.$$
	Indeed, for $f\in C_c(\mathbb R^2)$ (not necessarily rotation invariant) this follows by the Birman--Schwinger principle and simple Dirichlet--Neumann bracketing for the resulting Schr\"odinger operator $-\Delta+1-\alpha f^2$ as in the proof of \cite[Theorem 4.28]{FrLaWe}. Using Theorem \ref{solomyak thm} and Lemma \ref{bs limit lemma}, the asymptotics extend to all compactly supported $f\in (L_2\log L)(\mathbb R^2)$, in particular, to those in the statement of the lemma.
	
	To deduce the assertion of the lemma we want to apply Lemma \ref{third abstract limit lemma} with $p=2$. We define a spectral measure $\nu$ on the Borel $\sigma$-algebra on $[0,1)$ by setting
	$$\nu(I)=\pi_2(\chi_{2\pi I{\rm mod}2\pi}),\quad I\subset[0,1).$$
	That $\nu$ is indeed a spectral measure follows from the continuity of $\pi_2$ in the strong operator topology.
	
	Let $R_{\theta}$ be a rotation operator on $L_2(\mathbb{R}^2)$ by the angle $\theta\in(0,2\pi).$ If $I_1$ and $I_2$ are sub-intervals in $[0,1)$ of equal length, then there is a $\theta$ such that
	$$\nu(I_2)=R_{\theta}^{-1}\nu(I_1)R_{\theta}.$$
	Since $f$ is rotation invariant, it follows that
	$$A=R_{\theta}^{-1}AR_{\theta}.$$
	This verifies the first assumption on $\nu$ in Lemma \ref{third abstract limit lemma}.
	
The second assumption on $\nu$ in Lemma \ref{third abstract limit lemma} follows from Theorem \ref{commutator theorem}. The third assumption on $\nu$ in Lemma \ref{third abstract limit lemma} follows from Lemma \ref{d=2 useful limsup} with $g=\chi_{2\pi I{\rm mod}2\pi}$ and with $\psi(t)=C t^{\frac14},$ $t\in(0,1).$	Thus, all the assumptions in Lemma \ref{third abstract limit lemma} are met. Applying this lemma, we complete the proof.
\end{proof}

\begin{lem}\label{d=2 main lemma} Every $f\in C_c(\mathbb{R}^2)$ belongs to $\mathcal{X}_2.$
\end{lem}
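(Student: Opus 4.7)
The plan is to approximate every $f \in C_c(\mathbb{R}^2)$ in the $(L_2\log L)(\mathbb{R}^2)$ norm by finite sums of characteristic functions of pairwise disjoint balls (multiplied by scalars) and then invoke Lemma \ref{d=2 preliminary lemma}\eqref{dpld}. The elementary building blocks are the functions $c\chi_{B_r(x)}$, $c\in\mathbb{C}$, $r>0$, $x\in\mathbb{R}^2$: the function $c\chi_{B_1(0)}$ is rotation invariant, compactly supported and bounded, hence belongs to $\mathcal{X}_2$ by Lemma \ref{d=2 radial lemma}, and parts \eqref{dplb} and \eqref{dplc} of Lemma \ref{d=2 preliminary lemma} propagate this to arbitrary $c\chi_{B_r(x)}$. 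A finite sum $\sum_j c_j\chi_{B_{r_j}(x_j)}$ with the balls pairwise disjoint then belongs to $\mathcal{X}_2$ by part \eqref{dpla}.

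To carry out the approximation, I will fix $f\in C_c(\mathbb{R}^2)$ with support in a compact set $K$, choose a bounded open neighborhood $\Omega\supset K$, and for each small $\rho>0$ apply the Vitali covering theorem to produce a countable pairwise disjoint collection of closed balls $\{B_{r_j}(x_j)\}_{j\geq 1}\subset\Omega$ with radii $r_j\leq\rho$ whose union exhausts $\Omega$ up to a null set. Truncating to a finite subfamily $\{B_{r_j}(x_j)\}_{j=1}^N$, I arrange that $\Omega\setminus\bigcup_{j=1}^N B_{r_j}(x_j)$ has measure less than any prescribed $\varepsilon>0$. I then set
$$f_{\rho,N} = \sum_{j=1}^N f(x_j)\chi_{B_{r_j}(x_j)},$$
so that $|f-f_{\rho,N}|\leq\omega_f(2\rho)$ on $\bigcup_{j=1}^N B_{r_j}(x_j)$ (where $\omega_f$ is the modulus of continuity of $f$) and $|f-f_{\rho,N}|\leq\|f\|_\infty$ on the complement, while $f_{\rho,N}$ is supported in $\overline{\Omega}$.

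Since $f-f_{\rho,N}$ is uniformly bounded and supported in the fixed compact set $\overline{\Omega}$, its $(L_2\log L)(\mathbb{R}^2)$ norm is controlled by a term of order $\omega_f(2\rho)\cdot\|\chi_\Omega\|_{L_2\log L}$ plus a term of order $\|f\|_\infty\cdot\|\chi_E\|_{L_2\log L}$ with $m(E)<\varepsilon$, and both tend to zero as $\rho,\varepsilon\to 0$ by continuity of $f$ and by absolute continuity of the Orlicz norm with respect to measure. This produces a sequence $(f_n)_{n\geq 0}\subset\mathcal{X}_2$, all supported in $\overline{\Omega}$, with $f_n\to f$ in $(L_2\log L)(\mathbb{R}^2)$, and Lemma \ref{d=2 preliminary lemma}\eqref{dpld} then yields $f\in\mathcal{X}_2$. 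The principal technical point is the Vitali approximation itself: a naive regular packing of $\Omega$ by disks covers only a fixed fraction of its volume at every scale, so one genuinely needs Vitali's theorem (or an iterative refinement of tangent-disk packings) to make the uncovered measure arbitrarily small; everything else is a direct application of the closure properties already at our disposal.
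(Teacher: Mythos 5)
Your proof is correct and follows essentially the same route as the paper: reduce to step functions constant on a Vitali-type disjoint family of small balls, use Lemma~\ref{d=2 radial lemma} together with parts \eqref{dplb}, \eqref{dpla} of Lemma~\ref{d=2 preliminary lemma} to put those step functions in $\mathcal{X}_2$, and conclude by the $(L_2\log L)$-closure property \eqref{dpld}. The only differences from the paper are cosmetic: the paper works with the full countable Vitali family and applies \eqref{dpld} twice (first for the infinite sum at fixed mesh, then for the mesh going to zero, the latter via uniform convergence), whereas you truncate to a finite subfamily, control the leftover set directly via absolute continuity of the Orlicz norm, and apply \eqref{dpld} once; the paper uses the ball average of $f$ as the constant on each ball while you use $f(x_j)$, both of which give the same $\omega_f(\rho)$-type uniform error for continuous $f$. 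One small simplification you could make: you do not need the dilation property \eqref{dplc} at all, since for every $r>0$ the function $c\chi_{B_r(0)}$ is already bounded, compactly supported and rotation invariant, hence in $\mathcal{X}_2$ directly by Lemma~\ref{d=2 radial lemma}.
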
	
\begin{proof} By applying dilation and translation, we may assume without loss of generality that $f$ is supported on $[0,1]^2.$
	
The idea is to approximate $f$ in $(L_2\log L)(\mathbb{R}^2)$ by a sum of pairwise disjointly supported (shifted) rotation invariant functions.

For $l\in\mathbb{N},$ choose a sequence $\{B_{r_{m,l}}(c_{m,l})\}_{m\geq0}$ of non-intersecting balls such that each radius $r_{m,l}<\frac1l$ and such that
$$\bigcup_{m\geq0}B_{r_{m,l}}(c_{m,l})=[0,1]^2$$
modulo a set of measure $0.$

Set
$$f_{m,l}= \chi_{B_{r_{m,l}}(c_{m,l})}\cdot \frac1{{\rm Vol}(B_{r_{m,l}}(c_{m,l})}\int_{B_{r_{m,l}}(c_{m,l})}f.$$
Clearly, ${\rm Shift}_{-c_{m,l}}f_{m,l}$ is bounded, compactly supported and rotation invariant, so, by Lemma \ref{d=2 radial lemma}, ${\rm Shift}_{-c_{m,l}}f_{m,l} \in\mathcal{X}_2.$ Thus, by Lemma \ref{d=2 preliminary lemma} \eqref{dplb}, $f_{m,l}\in\mathcal{X}_2.$ For a fixed $l\in\mathbb{N},$ the functions $(f_{m,l})_{m\geq 0}$ are pairwise disjointly supported. By Lemma \ref{d=2 preliminary lemma} \eqref{dpla}, we have $\sum_{m=0}^M f_{m,l}\in\mathcal{X}_2$ for every $M\geq0$. By Lemma \ref{d=2 preliminary lemma} \eqref{dpld}, we have $\sum_{m\geq0} f_{m,l}\in\mathcal{X}_2$ for every fixed $l\in\mathbb{N}.$

Since $f\in C([0,1]^2),$ it follows that
$$\sum_{m\geq0} f_{m,l}\to f,\quad l\to\infty,$$
in the uniform norm. Applying again Lemma \ref{d=2 preliminary lemma} \eqref{dpld}, we complete the proof.
\end{proof}


\subsection{Proof of Theorem \ref{main thm special} for $d=2$}

As a consequence of the results in the previous subsection, we are finally in position to complete the proof of Theorem \ref{main thm special} in the remaining case $d=2$.

\begin{proof}[Proof of Theorem \ref{main thm special} for $d=2$]
	For every $K\in\mathbb{N}$ and $0\leq k<K$, let $h_{k,K}(s)=\chi_{[\frac{2\pi k}{K},\frac{2\pi (k+1)}{K})}({\rm Arg}(s)),$ $s\in\mathbb{S}^1.$ Let $g_{n,K}$ be the conditional expectation on $L_{\infty}(\mathbb{S}^1)$ of $g_n$ onto the subalgebra generated by $(h_{k,K})_{0\leq k<K}$ and write
$$g_{n,K}=\sum_{k=0}^{K-1}c_{n,k,K}h_{k,K}.$$
We set
$$f_{k,K}=\sum_{n=1}^{N}c_{n,k,K}f_n$$
and
$$S_K=\sum_{k=0}^{K-1}\pi_2(h_{k,K})\pi_1(f_{k,K})\pi_2(h_{k,K}) \,.$$

The operators
$$\pi_2(h_{k,K})\pi_1(f_{k,K})\pi_2(h_{k,K})(1-\Delta)^{-\frac12},\quad 0\leq k<K,$$
are pairwise orthogonal. Thus,
$$\mu(S_K(1-\Delta)^{-\frac12})=\mu\Big(\bigoplus_{k=0}^{K-1}\pi_2(h_{k,K})\pi_1(f_{k,K})\pi_2(h_{k,K})(1-\Delta)^{-\frac12}\Big).$$
By Lemma \ref{d=2 main lemma} and Definition \ref{x2 def}, we have
$$
\lim_{t\to\infty}t^{\frac12}\mu(t,\pi_1(f_{k,K})\pi_2(h_{k,K})(1-\Delta)^{-\frac12}) =  (4\pi)^{-\frac12} K^{-\frac12} \|f_{k,K}\|_2 \,.
$$
Meanwhile, by Theorem \ref{commutator theorem},
\begin{align*}
&~\quad \pi_2(h_{k,K})\pi_1(f_{k,K})\pi_2(h_{k,K})(1-\Delta)^{-\frac12}  - \pi_1(f_{k,K})\pi_2(h_{k,K})(1-\Delta)^{-\frac12} \\
&= [\pi_2(h_{k,K}), \pi_1(f_{k,K})(1-\Delta)^{-\frac12}] \pi_2(h_{k,K}) \in (\mathcal L_{2,\infty})_0 \,,
\end{align*}
so, by Lemma \ref{bs sep lemma},
$$
\lim_{t\to\infty}t^{\frac12}\mu(t,\pi_2(h_{k,K})\pi_1(f_{k,K})\pi_2(h_{k,K})(1-\Delta)^{-\frac12}) = (4\pi)^{-\frac12} K^{-\frac12} \|f_{k,K}\|_2 \,.
$$
We deduce, by Lemma \ref{bs pairwise orthogonal lemma},
\begin{align*}
& ~\quad \lim_{t\to\infty}t^{\frac12}\mu\Big(t,\bigoplus_{k=0}^{K-1}\pi_2(h_{k,K})\pi_1(f_{k,K})\pi_2(h_{k,K})(1-\Delta)^{-\frac12}\Big) \\
 &=(4\pi)^{-\frac12} \Big( \frac{1}K \sum_{k=0}^{K-1} \|f_{k,K}\|_2^2\Big)^{\frac12}.
\end{align*}
In other words, we have
\begin{equation}\label{dfl eq0}
\lim_{t\to\infty}t^{\frac12}\mu(S_K(1-\Delta)^{-\frac12})
=(4\pi)^{-\frac12} \Big( \frac{1}K \sum_{k=0}^{K-1}\|f_{k,K}\|_2^2\Big)^{\frac12}.
\end{equation}

Now let us introduce
$$T_K=\sum_{n=1}^{N}\pi_1(f_n)\pi_2(g_{n,K})=\sum_{k=0}^{K-1}\pi_1(f_{k,K})\pi_2(h_{k,K}) \,.$$
By Theorem \ref{commutator theorem}, we have
$$(T_K-S_K)(1-\Delta)^{-\frac12}=\sum_{k=0}^{K-1}[\pi_1(f_{k,K})(1-\Delta)^{-\frac12},\pi_2(h_{k,K})]\cdot \pi_2(h_{k,K})\in(\mathcal{L}_{2,\infty})_0.$$
It follows now from \eqref{dfl eq0} and Lemma \ref{bs sep lemma} that
\begin{equation}\label{dfl eq1}
\lim_{t\to\infty}t^{\frac12}\mu(t,T_K(1-\Delta)^{-\frac12})= (4\pi)^{-\frac12} \Big( \frac{1}K \sum_{k=0}^{K-1}\|f_{k,K}\|_2^2\Big)^{\frac12}.
\end{equation}

We note that
$$
\frac{1}K \sum_{k=0}^{K-1}\|f_{k,K}\|_2^2 = (2\pi)^{-1} \left\|\sum_{k=0}^{K-1}f_{k,K}\otimes h_{k,K} \right\|_2^2 \,.
$$
Moreover, it is clear from the expression for $g_{n,K}$ and from the definition of $f_{k,K}$ that
$$\sum_{k=0}^{K-1}f_{k,K}\otimes h_{k,K}=\sum_{k=0}^{K-1}\sum_{n=1}^Nc_{n,k,K}f_n\otimes h_{k,K}=\sum_{n=1}^{N}f_n\otimes g_{n,K}.$$
Thus, \eqref{dfl eq1} is the same as
\begin{equation}\label{dfl eq2}
\lim_{t\to\infty}t^{\frac12}\mu(T_K(1-\Delta)^{-\frac12})= 2^{-\frac12} (2\pi)^{-1} \left\| \sum_{n=1}^{N}f_n\otimes g_{n,K} \right\|_2.
\end{equation}

Note that $g_{n,K}\to g_n$ in the uniform norm as $K\to\infty.$ Hence, $T_K(1-\Delta)^{-\frac12}\to T(1-\Delta)^{-\frac12}$ in $\mathcal{L}_{2,\infty}$ as $K\to\infty$ and $\sum_{n=1}^{N}f_n\otimes g_{n,K}\to \sum_{n=1}^{N}f_n\otimes g_n$ in $L_2(\mathbb{R}^2\times\mathbb{S}^1)$ as $K\to\infty$. The assertion follows now from \eqref{dfl eq2} and Lemma \ref{bs limit lemma}.
\end{proof}


\section{Proof of Theorem \ref{main thm}}

In this section, we prove our main result on spectral asymptotics, Theorem \ref{main thm}. A crucial ingredient will be Theorem \ref{main thm special}. In order to deduce the former from the latter, we will use the following two simple lemmas.

\begin{lem}\label{commutator is compact} If $T_1,T_2\in\Pi,$ then
$$[T_1,T_2]\in \mathcal{K}(L_2(\mathbb{R}^d)).$$
\end{lem}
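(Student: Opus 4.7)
The plan is very short: exploit the principal symbol mapping ${\rm sym}$ and the fact that its target algebra is commutative. Concretely, since ${\rm sym}:\Pi\to\mathcal{A}_1\otimes_{\min}\mathcal{A}_2 = C(\mathbb{S}^{d-1},\mathbb{C}+C_0(\mathbb{R}^d))$ is a $*$-homomorphism and the target is a commutative $C^\ast$-algebra, for any $T_1,T_2\in\Pi$ we have
$$
{\rm sym}([T_1,T_2]) = [{\rm sym}(T_1),{\rm sym}(T_2)] = 0,
$$
so $[T_1,T_2]\in\ker({\rm sym})$.

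The remaining ingredient is the fact that $\ker({\rm sym})\subseteq\mathcal{K}(L_2(\mathbb{R}^d))$, which is part of the principal symbol construction developed in the cited papers \cite{DAO1,DAO2}: in that setup one has a short exact sequence
$$
0 \longrightarrow \mathcal{K}(L_2(\mathbb{R}^d)) \longrightarrow \Pi \xrightarrow{\ {\rm sym}\ } \mathcal{A}_1\otimes_{\min}\mathcal{A}_2 \longrightarrow 0,
$$
which is the Euclidean analogue of the classical exact sequence for zero-order pseudodifferential operators on a compact manifold (where the kernel of the principal symbol consists of operators of negative order, hence compact). The lemma then follows by combining the two displayed inclusions.

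I expect no real obstacle here: the work is already done in \cite{DAO1,DAO2}, and the only substantive step is recognising that both generating sets $\pi_1(\mathcal{A}_1)$ and $\pi_2(\mathcal{A}_2)$ are commutative in themselves, so the non-triviality lies entirely in mixed commutators $[\pi_1(f),\pi_2(g)]$, whose compactness is exactly what the principal symbol exact sequence encodes (and matches, in the Euclidean setting, classical compactness results for commutators with singular-integral-type operators acting on functions decaying at infinity, as reflected in the choice $\mathcal{A}_1=\mathbb{C}+C_0(\mathbb{R}^d)$). If a more self-contained argument were desired, one could verify the three generator cases directly — $[\pi_1(f_1),\pi_1(f_2)]=0$, $[\pi_2(g_1),\pi_2(g_2)]=0$, and compactness of $[\pi_1(f),\pi_2(g)]$ for $f\in\mathcal{A}_1$, $g\in\mathcal{A}_2$ — and then extend to general $T_1,T_2\in\Pi$ by a density argument using that $\mathcal{K}(L_2(\mathbb{R}^d))$ is closed in the uniform norm and that the generators of $\Pi$ give rise to a dense $*$-subalgebra. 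But invoking the exact sequence from \cite{DAO1,DAO2} is the cleanest route.
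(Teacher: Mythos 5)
Your proof is correct and takes essentially the same route as the paper: both observe that $\mathrm{sym}$ is a $\ast$-homomorphism into a commutative algebra, so $\mathrm{sym}([T_1,T_2])=0$, and then use that $\ker(\mathrm{sym})\subseteq\mathcal{K}(L_2(\mathbb{R}^d))$. The paper spells out the latter inclusion by recalling from \cite{DAO2} that $\mathrm{sym}=\theta^{-1}\circ q$ with $q$ the Calkin quotient map and $\theta$ a $\ast$-isomorphism, from which $q([T_1,T_2])=0$ and hence compactness follows immediately; you invoke the same fact in the guise of the short exact sequence, so the substance is identical.
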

\begin{proof} Let $q:B(L_2(\mathbb{R}^d))\to B(L_2(\mathbb{R}^d))/\mathcal{K}(L_2(\mathbb{R}^d))$ be the canonical quotient map. Recall (see the proof of \cite[Theorem 3.3]{DAO2}) that ${\rm sym}$ is constructed as a composition
$${\rm sym}=\theta^{-1}\circ q,$$
where $\theta:\mathcal{A}_1\otimes_{{\rm min}}\mathcal{A}_2\to q(\Pi)$ is some $\ast$-isomorphism (its definition and properties are irrelevant at the current proof). Since ${\rm sym}$ is a $\ast$-homomorphism, it follows that
$${\rm sym}([T_1,T_2])=[{\rm sym}(T_1),{\rm sym}(T_2)]=0,\quad T_1,T_2\in\Pi.$$
Thus,
$$
q([T_1,T_2])= \theta({\rm sym}([T_1,T_2]) = 0 \,,
$$
which yields the assertion.
\end{proof}

\begin{lem}\label{ber request lemma1} Let $(f_k)_{k=1}^m\subset\mathcal{A}_1$ and $(g_k)_{k=1}^m\subset\mathcal{A}_2.$ Then
$$\prod_{k=1}^m\pi_1(f_k)\pi_2(g_k)\in\pi_1(\prod_{k=1}^mf_k)\pi_2(\prod_{k=1}^mg_k)+\mathcal{K}(L_2(\mathbb{R}^d)).$$
\end{lem}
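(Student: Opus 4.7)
The plan is to proceed by induction on $m$, using Lemma \ref{commutator is compact} to swap a $\pi_2$ past a $\pi_1$ modulo a compact error, and then the fact that $\pi_1$ and $\pi_2$ are $\ast$-homomorphisms so that adjacent factors of the same type combine.

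For the base case $m=1$ there is nothing to prove. For the inductive step, I would write
$$
\prod_{k=1}^m \pi_1(f_k)\pi_2(g_k) = \pi_1(f_1)\pi_2(g_1) \cdot \prod_{k=2}^m \pi_1(f_k)\pi_2(g_k),
$$
apply the inductive hypothesis to the second factor to get
$$
\prod_{k=2}^m \pi_1(f_k)\pi_2(g_k) = \pi_1\!\left(\prod_{k=2}^m f_k\right)\pi_2\!\left(\prod_{k=2}^m g_k\right) + K,
$$
with $K\in\mathcal K(L_2(\mathbb R^d))$, and observe that $\pi_1(f_1)\pi_2(g_1)\cdot K$ is compact since $\mathcal K(L_2(\mathbb R^d))$ is an ideal in $B(L_2(\mathbb R^d))$.

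Next, I would swap the middle pair of operators. By Lemma \ref{commutator is compact}, applied to $T_1 = \pi_2(g_1)$ and $T_2 = \pi_1\!\bigl(\prod_{k=2}^m f_k\bigr)$, both of which lie in $\Pi$, the commutator
$$
\left[\pi_2(g_1),\,\pi_1\!\left(\prod_{k=2}^m f_k\right)\right]
$$
is compact. Multiplying this commutator on the left by the bounded operator $\pi_1(f_1)$ and on the right by the bounded operator $\pi_2\!\bigl(\prod_{k=2}^m g_k\bigr)$ yields another compact operator. Therefore, modulo $\mathcal K(L_2(\mathbb R^d))$, we may interchange $\pi_2(g_1)$ with $\pi_1\!\bigl(\prod_{k=2}^m f_k\bigr)$, obtaining
$$
\pi_1(f_1)\,\pi_1\!\left(\prod_{k=2}^m f_k\right)\pi_2(g_1)\,\pi_2\!\left(\prod_{k=2}^m g_k\right) + \mathcal K(L_2(\mathbb R^d)).
$$

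Finally, since $\pi_1$ and $\pi_2$ are $\ast$-homomorphisms, the first two factors collapse to $\pi_1\!\bigl(\prod_{k=1}^m f_k\bigr)$ and the last two to $\pi_2\!\bigl(\prod_{k=1}^m g_k\bigr)$, giving exactly the claim. There is no real obstacle here: the only content is Lemma \ref{commutator is compact}, and everything else is bookkeeping.
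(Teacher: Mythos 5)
Your proof is correct and takes essentially the same approach as the paper: an induction on $m$ whose inductive step swaps $\pi_2(g_1)$ past a $\pi_1$ factor using Lemma \ref{commutator is compact} and then merges adjacent factors via the $\ast$-homomorphism property. The paper organizes this by first establishing the $m=2$ case as a separate sublemma and invoking it in the inductive step, whereas you fold the swap directly into the step; this is a purely presentational difference.
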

\begin{proof} We prove the assertion by induction on $m.$ For $m=1,$ there is nothing to prove. Let us prove the assertion for $m=2.$ We have
\begin{align*}
&~\quad \pi_1(f_1)\pi_2(g_1)\pi_1(f_2)\pi_2(g_2)\\
&=[\pi_2(g_1),\pi_1(f_1f_2)]\cdot \pi_2(g_2)
+[\pi_1(f_1),\pi_2(g_1)]\cdot \pi_1(f_2)\pi_2(g_2)+\pi_1(f_1f_2)\pi_2(g_1g_2).
\end{align*}
By Lemma \ref{commutator is compact}, we have
$$[\pi_1(f_1),\pi_2(g_1)],[\pi_2(g_1),\pi_1(f_1f_2)]\in \mathcal{K}(L_2(\mathbb{R}^d)).$$
Therefore,
$$\pi_1(f_1)\pi_2(g_1)\pi_1(f_2)\pi_2(g_2)\in \pi_1(f_1f_2)\pi_2(g_1g_2)+\mathcal{K}(L_2(\mathbb{R}^d)).$$
This proves the assertion for $m=2.$

It remains to prove the step of induction. Suppose the assertion holds for $m\geq 2$ and let us prove it for $m+1.$ Since
$$\prod_{k=1}^{m+1}\pi_1(f_k)\pi_2(g_k)=\pi_1(f_1)\pi_2(g_1)\cdot\prod_{k=2}^{m+1}\pi_1(f_k)\pi_2(g_k),$$
using the inductive assumption, we obtain
$$\prod_{k=1}^{m+1}\pi_1(f_k)\pi_2(g_k)\in\pi_1(f_1)\pi_2(g_1)\cdot\pi_1(\prod_{k=2}^{m+1}f_k)\pi_2(\prod_{k=2}^{m+1}g_k)+\mathcal{K}(L_2(\mathbb{R}^d)).$$
Using the assertion for $m=2,$ we obtain
$$\pi_1(f_1)\pi_2(g_1)\cdot\pi_1(\prod_{k=2}^{m+1}f_k)\pi_2(\prod_{k=2}^{m+1}g_k)\in\pi_1(\prod_{k=1}^{m+1}f_k)\pi_2(\prod_{k=1}^{m+1}g_k)+\mathcal{K}(L_2(\mathbb{R}^d)).$$
Combining the last two equations, we obtain
$$\prod_{k=1}^{m+1}\pi_1(f_k)\pi_2(g_k)\in\pi_1(\prod_{k=1}^{m+1}f_k)\pi_2(\prod_{k=1}^{m+1}g_k)+\mathcal{K}(L_2(\mathbb{R}^d)).$$
This establishes the step of induction and completes the proof of the lemma.
\end{proof}

We are finally in position to prove our main result on spectral asymptotics.

\begin{proof}[Proof of Theorem \ref{main thm}] By the definition of the $C^{\ast}$-algebra $\Pi,$ for every $T\in\Pi,$ there is a sequence $(T_n)_{n\geq1}$ in the $\ast$-algebra generated by $\pi_1(\mathcal{A}_1)$ and $\pi_2(\mathcal{A}_2)$ such that $T_n\to T$ in the uniform norm. We can write
	$$T_n=\sum_{l=1}^{l_n}\prod_{k=1}^{k_n}\pi_1(f_{n,k,l})\pi_2(g_{n,k,l})$$
	with $f_{n,k,l}\in\mathcal A_1$ and $g_{n,k,l}\in\mathcal A_2$. Let us abbreviate
	$$f_{n,l}=\prod_{k=1}^{k_n}f_{n,k,l}\in\mathcal{A}_1,\quad g_{n,l}=\prod_{k=1}^{k_n}g_{n,k,l}\in\mathcal{A}_2.$$
Then, by Lemma \ref{ber request lemma1}, we have
$$
S_n = T_n - \sum_{l=1}^{l_n}\pi_1(f_{n,l})\pi_2(g_{n,l}) \in\mathcal{K}(L_2(\mathbb{R}^d)).
$$

By assumption in Theorem \ref{main thm}, the operator $T$ is compactly supported. Hence, $T=T\pi_1(\phi)$ for some $\phi\in C^{\infty}_c(\mathbb{R}^d).$ We decompose
$$T_n\pi_1(\phi)=A_n+B_n+C_n,$$
where
$$A_n=\sum_{l=1}^{l_n}\pi_1(\phi\cdot f_{n,l})\pi_2(g_{n,l}),$$
$$B_n=\sum_{l=1}^{l_n}\pi_1(f_{n,l})\cdot [\pi_2(g_{n,l}),\pi_1(\phi)],$$
$$C_n=S_n\pi_1(\phi).$$

By Theorem \ref{main thm special}, we have
\begin{equation*}
\lim_{t\to\infty}t^{\frac1d}\mu(t,A_n(1-\Delta)^{-\frac12})=d^{-\frac1d} (2\pi)^{-1} \|{\rm sym}(A_n)\|_d.
\end{equation*}
By Theorem \ref{commutator theorem}, we have $B_n(1-\Delta)^{-\frac12}\in(\mathcal{L}_{d,\infty})_0.$ Since $S_n$ is compact and since $\pi_1(\phi)(1-\Delta)^{-\frac12}\in\mathcal{L}_{d,\infty},$ it follows that $C_n(1-\Delta)^{-\frac12}\in(\mathcal{L}_{d,\infty})_0.$ Consequently, $(B_n+C_n)(1-\Delta)^{-\frac12}\in(\mathcal{L}_{d,\infty})_0$ and, by Lemma \ref{bs sep lemma},
$$
\lim_{t\to\infty}t^{\frac1d}\mu(t,(A_n+B_n+C_n)(1-\Delta)^{-\frac12}) = \lim_{t\to\infty}t^{\frac1d}\mu(t,A_n(1-\Delta)^{-\frac12}) \,.
$$
On the other hand, since $B_n$ and $C_n$ are compact (the former by Lemma \ref{commutator is compact}) and since ${\rm sym}$ vanishes on every compact operator in $\Pi$ (see the proof of Lemma \ref{commutator is compact}), it follows that
$$
{\rm sym}(A_n)={\rm sym}(A_n+B_n+C_n) \,.
$$
Combining all these assertions, we find that
\begin{equation*}
\lim_{t\to\infty}t^{\frac1d}\mu(t,A_n+B_n+C_n)=d^{-\frac1d}  (2\pi)^{-1} \|{\rm sym}(A_n+B_n+C_n)\|_d \,.
\end{equation*}

Taking into account that $A_n+B_n+C_n=T_n\pi_1(\phi),$ we rewrite these asymptotics as
\begin{equation}\label{mt eq2}
\lim_{t\to\infty}t^{\frac1d}\mu(t,T_n\pi_1(\phi))= d^{-\frac1d} (2\pi)^{-1} \|{\rm sym}(T_n\pi_1(\phi))\|_d.
\end{equation}
Since $T_n\pi_1(\phi)\to T\pi_1(\phi)=T$ in the uniform norm, we deduce, on the one hand, using dominated convergence that
$$
\|{\rm sym}(T_n\pi_1(\phi))\|_d \to \|{\rm sym}(T)\|_d
$$
and, on the other hand, using the fact\footnote{For $d=2$ this follows from Theorem \ref{solomyak thm}. For $d>2,$ this follows from Cwikel's estimate in $\mathcal{L}_{d,\infty}.$} that $\pi_1(\phi)(1-\Delta)^{-\frac12}\in\mathcal L_{d,\infty}$, that
$$T_n\pi_1(\phi)(1-\Delta)^{-\frac12}\to T(1-\Delta)^{-\frac12}
\qquad\text{in}\ \mathcal L_{d,\infty} \,.
$$
The assertion follows now from \eqref{mt eq2} and Lemma \ref{bs limit lemma}.	
\end{proof}


\section{Proof of Theorems \ref{mainiff} and \ref{main cor}}

Our goal in this section is to prove Theorems \ref{mainiff} and \ref{main cor} concerning
$$\dbar f = i [{\rm sgn}\,\mathcal D,1\otimes M_f].$$

We begin with one implication in Theorem \ref{mainiff}, which is a simple consequence of previous work in \cite{LMSZ}.

\begin{prop}\label{upperbound}
If $d\geq 2$ and if $f\in\dot W^{1}_d(\mathbb{R}^d),$ then $\dbar f\in\mathcal L_{d,\infty}$ with
$$\|\dbar f\|_{d,\infty} \leq C_d \|\nabla f\|_d.$$
\end{prop}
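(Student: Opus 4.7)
The plan is to reduce to the bounded case from \cite{LMSZ} via a truncation argument. For $n\in\mathbb N$, let $\tau_n(t):=\max(-n,\min(n,t))$ be the Lipschitz truncation on $\mathbb R$, and set $f_n:=\tau_n\circ f$. Since $\tau_n$ is $1$-Lipschitz with $\tau_n'=\chi_{(-n,n)}$ a.e., the chain rule for Sobolev functions gives $\nabla f_n=\chi_{\{|f|<n\}}\nabla f$ a.e. In particular, $f_n\in L_\infty(\mathbb R^d)\cap\dot W^1_d(\mathbb R^d)$ with $\|\nabla f_n\|_d\leq\|\nabla f\|_d$, and, by dominated convergence applied to $|\nabla(f-f_n)|^d=\chi_{\{|f|\geq n\}}|\nabla f|^d\leq|\nabla f|^d\in L_1$, one has $\|\nabla(f-f_n)\|_d\to 0$ as $n\to\infty$. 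The \cite{LMSZ} version of Theorem \ref{mainiff}, valid under the extra assumption of boundedness, applies to each $f_n$ and yields the uniform estimate
$$
\|\dbar f_n\|_{d,\infty}\leq C_d\,\|\nabla f_n\|_d\leq C_d\,\|\nabla f\|_d.
$$

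Next, I would show that $\dbar f_n\to\dbar f$ converges in operator norm. By the Coifman--Rochberg--Weiss theorem, $\dbar:BMO(\mathbb R^d)\to B(\mathbb C^N\otimes L_2(\mathbb R^d))$ is bounded with $\|\dbar g\|_\infty\leq C\|g\|_{BMO}$; since $\dbar$ annihilates constants, only the BMO seminorm enters. By the Poincar\'e inequality, this seminorm is controlled by $\|\nabla g\|_d$. Applied to $g=f-f_n$,
$$
\|\dbar f-\dbar f_n\|_\infty=\|\dbar(f-f_n)\|_\infty\leq C\,\|\nabla(f-f_n)\|_d \xrightarrow[n\to\infty]{}0.
$$
In particular $\dbar f$ is well-defined as a bounded operator (which is consistent with $\dot W^1_d\subset BMO$).

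Finally, one passes the $\mathcal L_{d,\infty}$-bound to the limit by the standard Fatou/lower-semicontinuity property of the singular value sequence under operator norm convergence: from $\mu(k,\dbar f)\leq\mu(k,\dbar f_n)+\|\dbar f-\dbar f_n\|_\infty$ one obtains $\mu(k,\dbar f)\leq\liminf_n\mu(k,\dbar f_n)$ for every $k$, so
$$
\|\dbar f\|_{d,\infty}=\sup_{k\geq 0}(k+1)^{\frac1d}\mu(k,\dbar f)\leq\liminf_n\|\dbar f_n\|_{d,\infty}\leq C_d\,\|\nabla f\|_d.
$$
There is no serious obstacle in this argument; the only mild point is noting that the norm convergence $\dbar f_n\to\dbar f$ is available, which in turn is a clean consequence of the Coifman--Rochberg--Weiss boundedness of $\dbar$ on $BMO$ together with the $\dot W^1_d\hookrightarrow BMO$ embedding. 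This is exactly why the authors describe the proposition as a simple consequence of \cite{LMSZ}.
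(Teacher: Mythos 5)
Your proof is correct and follows essentially the same strategy as the paper's: approximate $f$ by functions to which the bounded-case result of \cite{LMSZ} applies, upgrade $\nabla$-convergence in $L_d$ to operator-norm convergence of the quantised derivatives via Coifman--Rochberg--Weiss and the embedding $\dot W^1_d\hookrightarrow BMO$, then conclude by Fatou for singular values. The only difference is in producing the approximants: you truncate ($f_n=\tau_n\circ f$, using the Sobolev chain rule to get the uniform bound $\|\nabla f_n\|_d\le\|\nabla f\|_d$ and dominated convergence for $\|\nabla(f-f_n)\|_d\to0$), whereas the paper invokes density of $C^\infty_c(\mathbb R^d)$ in $\dot W^1_d(\mathbb R^d)$; both routes are valid since the \cite{LMSZ} result only requires $f\in L_\infty\cap\dot W^1_d$, and the difference is immaterial.
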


\begin{proof}
	We know \cite[Theorem 11.43]{Leoni-book} that there is a sequence $(f_n)\subset C^\infty_c(\mathbb R^d)$ with $\nabla f_n\to\nabla f$ in $L_d$. Applying \cite[Theorem 1]{LMSZ} we infer that
	$$
	\| \dbar f_n \|_{d,\infty} \leq C_d \|\nabla f_n \|_{d} \,.
	$$
	The continuity of the embedding $\dot W^1_d\subset BMO$ implies that $f_n\to f$ in $BMO$ and therefore, by \cite[Theorem I]{CoRoWe}, $\dbar f_n \to\dbar f$ in uniform norm. By the Fatou property, this implies $\dbar f\in\mathcal L_{d,\infty}$ and
	$$
	\|\dbar f \|_{d,\infty} \leq \liminf_{n\to\infty} \| \dbar f_n \|_{d,\infty} \leq C_d \liminf_{n\to\infty} \|\nabla f_n \|_{d} = C_d \|\nabla f\|_{d} \,,
	$$
	proving the claim.
\end{proof}

The converse direction in Theorem \ref{mainiff} is harder. We begin by computing the asymptotics in the smooth case. We make use of the following simple lemma.

\begin{lem}\label{ak properties lemma} Let $d\geq 2$, let $f\in C^\infty_c(\mathbb R^d)$ be real-valued and set
$$A_k=M_{\partial_kf}-\sum_{j=1}^d\frac{D_kD_j}{-\Delta}M_{\partial_jf},\quad 1\leq k\leq d.$$
Then one has
\begin{equation}
(\Im A_k)(1-\Delta)^{-\frac12}\in(\mathcal{L}_{d,\infty})_0,\quad 1\leq k\leq d,
\end{equation}
\begin{equation}
(1-\Delta)^{-\frac12}[A_k,A_l](1-\Delta)^{-\frac12}\in(\mathcal{L}_{\frac{d}{2},\infty})_0,\quad 1\leq k,l\leq d.
\end{equation}
\end{lem}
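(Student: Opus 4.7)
The plan is to reduce both parts of the lemma to Theorem \ref{commutator theorem}. To streamline notation, set $\phi_j := \partial_j f \in C^\infty_c(\mathbb R^d)$ and let $h_{kj} \in C(\mathbb S^{d-1})$ be given by $h_{kj}(s) = s_k s_j$, so that $D_kD_j/(-\Delta) = \pi_2(h_{kj})$ and
$$
A_k = \pi_1(\phi_k) - \sum_{j=1}^d \pi_2(h_{kj})\pi_1(\phi_j).
$$
Since $f$ and each $h_{kj}$ are real-valued, every $\pi_1(\phi_j)$ and every $\pi_2(h_{kj})$ is self-adjoint, and a direct computation gives
$$
A_k^* = \pi_1(\phi_k) - \sum_{j=1}^d \pi_1(\phi_j)\pi_2(h_{kj}),
\qquad
2i\,\Im A_k = \sum_{j=1}^d [\pi_1(\phi_j),\pi_2(h_{kj})].
$$
Because the homogeneous Fourier multiplier $\pi_2(h_{kj})$ commutes with the radial multiplier $(1-\Delta)^{-1/2}$, each summand multiplied by $(1-\Delta)^{-1/2}$ on the right equals $[\pi_1(\phi_j)(1-\Delta)^{-1/2},\pi_2(h_{kj})]$, which lies in $(\mathcal L_{d,\infty})_0$ by Theorem \ref{commutator theorem}. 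This yields (1).

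For (2), my plan is to expand $[A_k,A_l]$ bilinearly and exploit the cancellations $[\pi_1(\phi),\pi_1(\phi')]=0$ and $[\pi_2(h),\pi_2(h')]=0$ to rewrite every surviving piece in the form $Y_1\,[\pi_1(\phi),\pi_2(h)]\,\pi_1(\phi')$, where $\phi,\phi'\in C^\infty_c(\mathbb R^d)$, $h \in C(\mathbb S^{d-1})$, and $Y_1$ is either the identity or $\pi_2(h')$ for some $h' \in C(\mathbb S^{d-1})$. The term $[\pi_1(\phi_k),\pi_1(\phi_l)]$ vanishes outright. In each mixed bracket $-[\pi_1(\phi_k),\pi_2(h_{lj})\pi_1(\phi_j)]$ and $-[\pi_2(h_{ki})\pi_1(\phi_i),\pi_1(\phi_l)]$, the only non-commutator residue is a multiple of $[\pi_1,\pi_1]=0$ and so vanishes, leaving a clean $[\pi_1,\pi_2]\,\pi_1(\phi')$. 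In the double bracket $\bigl[\sum_i \pi_2(h_{ki})\pi_1(\phi_i),\sum_j \pi_2(h_{lj})\pi_1(\phi_j)\bigr]$, the ``diagonal'' remainder pieces $\pi_2(h_{ki}h_{lj})\pi_1(\phi_i\phi_j)$ and $\pi_2(h_{lj}h_{ki})\pi_1(\phi_j\phi_i)$ coincide by pointwise commutativity of the symbols and cancel, leaving only terms of the announced form with $Y_1 = \pi_2(h_{ki})$ or $Y_1 = \pi_2(h_{lj})$.

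To bound a typical such piece after sandwiching by $(1-\Delta)^{-1/2}$, I commute $(1-\Delta)^{-1/2}$ past the Fourier multiplier $Y_1$ and factor the result as
$$
Y_1 \cdot \bigl((1-\Delta)^{-1/2}[\pi_1(\phi),\pi_2(h)]\bigr) \cdot \bigl(\pi_1(\phi')(1-\Delta)^{-1/2}\bigr).
$$
Commuting $\pi_2(h)$ through $(1-\Delta)^{-1/2}$ once more, the middle factor equals $[(1-\Delta)^{-1/2}\pi_1(\phi),\pi_2(h)]$, which is, up to a sign, the Hilbert-space adjoint of the operator in Theorem \ref{commutator theorem} applied to $\bar\phi,\bar h$, and hence lies in the $*$-closed ideal $(\mathcal L_{d,\infty})_0$. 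The right factor lies in $\mathcal L_{d,\infty}$ by Lemma \ref{d>2 useful cwikel} for $d>2$ and by Theorem \ref{solomyak thm} for $d=2$, and $Y_1$ is bounded. The ideal property recorded in Subsection \ref{sec:compact} (an $\mathcal L_{d,\infty}$-factor multiplied by a $(\mathcal L_{d,\infty})_0$-factor lies in $(\mathcal L_{d/2,\infty})_0$), together with H\"older's inequality for weak Schatten ideals, yields (2). The main obstacle is the combinatorial bookkeeping in the expansion of $[A_k,A_l]$; once that collapse to commutator form is established, the analytic step is immediate from Theorem \ref{commutator theorem} and the Cwikel/Solomyak-type bounds.
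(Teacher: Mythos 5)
Your proposal is correct and follows essentially the same route as the paper's proof. Both arguments reduce (1) to Theorem \ref{commutator theorem} by writing $\Im A_k$ (up to a constant) as $\sum_j[\pi_1(\partial_jf),\pi_2(h_{kj})]$, and both expand $[A_k,A_l]$ using the mutual commutativity within $\pi_1(\mathcal A_1)$ and within $\pi_2(\mathcal A_2)$ to kill the non-commutator residues, leaving a finite sum of terms of the form $Y_1\,[\pi_1(\phi),\pi_2(h)]\,\pi_1(\phi')$ with $Y_1$ bounded; sandwiching by $(1-\Delta)^{-1/2}$ and applying Theorem \ref{commutator theorem}, the $\mathcal L_{d,\infty}$-membership of $\pi_1(\phi')(1-\Delta)^{-1/2}$, and H\"older then gives (2). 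The only (cosmetic) difference is that you make explicit the adjoint/$*$-closedness argument needed to pass from $[\pi_1(\phi)(1-\Delta)^{-1/2},\pi_2(h)]$ to $[(1-\Delta)^{-1/2}\pi_1(\phi),\pi_2(h)]$, a step the paper leaves implicit.
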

\begin{proof} Since $f$ is real-valued, it follows that
$$A_k^{\ast}=M_{\partial_kf}-\sum_{j=1}^dM_{\partial_jf}\frac{D_kD_j}{-\Delta},\quad 1\leq k\leq d.$$
Thus,
$$\Im A_k =-\sum_{j=1}^d\big[\frac{D_kD_j}{-\Delta},M_{\partial_jf}\big],\quad 1\leq k\leq d.$$
The first inclusion follows now from Theorem \ref{commutator theorem} (we apply it to the function $g({\bf s})= s_ks_j,$ ${\bf s}\in\mathbb{S}^{d-1}$).

Next,
\begin{align*}
[A_k,A_l]&=-\sum_{j_2=1}^d[M_{\partial_k f},\frac{D_lD_{j_2}}{-\Delta}M_{\partial_{j_2}f}]-\sum_{j_1=1}^d[\frac{D_kD_{j_1}}{-\Delta}M_{\partial_{j_1}f},M_{\partial_lf}]\\
&\quad +\sum_{j_1,j_2=1}^d[\frac{D_kD_{j_1}}{-\Delta}M_{\partial_{j_1}f},\frac{D_lD_{j_2}}{-\Delta}M_{\partial_{j_2}f}]\\
&=-\sum_{j_2=1}^d[M_{\partial_k f},\frac{D_lD_{j_2}}{-\Delta}]\cdot M_{\partial_{j_2}f}-\sum_{j_1=1}^d[\frac{D_kD_{j_1}}{-\Delta},M_{\partial_lf}]\cdot M_{\partial_{j_1}f}\\
&\quad +\sum_{j_1,j_2=1}^d\frac{D_kD_{j_1}}{-\Delta}\cdot [M_{\partial_{j_1}f},\frac{D_lD_{j_2}}{-\Delta}]\cdot M_{\partial_{j_2}f}\\
&\qquad +\sum_{j_1,j_2=1}^d\frac{D_lD_{j_2}}{-\Delta}\cdot [\frac{D_kD_{j_1}}{-\Delta},M_{\partial_{j_2}f}]\cdot M_{\partial_{j_1}f}.
\end{align*}
Therefore,
\begin{align*}
&~\quad (1-\Delta)^{-\frac12}[A_k,A_l](1-\Delta)^{-\frac12}\\
&=-\sum_{j_2=1}^d[(1-\Delta)^{-\frac12}M_{\partial_k f},\frac{D_lD_{j_2}}{-\Delta}]\cdot M_{\partial_{j_2}f}(1-\Delta)^{-\frac12}\\
&~\quad -\sum_{j_1=1}^d[\frac{D_kD_{j_1}}{-\Delta},(1-\Delta)^{-\frac12}M_{\partial_lf}]\cdot M_{\partial_{j_1}f}(1-\Delta)^{-\frac12}\\
&~\quad 
+\sum_{j_1,j_2=1}^d\frac{D_kD_{j_1}}{-\Delta}\cdot [(1-\Delta)^{-\frac12}M_{\partial_{j_1}f},\frac{D_lD_{j_2}}{-\Delta}]\cdot M_{\partial_{j_2}f}(1-\Delta)^{-\frac12}\\
&~\quad  +\sum_{j_1,j_2=1}^d\frac{D_lD_{j_2}}{-\Delta}\cdot [\frac{D_kD_{j_1}}{-\Delta},(1-\Delta)^{-\frac12}M_{\partial_{j_2}f}]\cdot M_{\partial_{j_1}f}(1-\Delta)^{-\frac12}.
\end{align*}
By Theorem \ref{commutator theorem}, each commutator factor in the above formula belongs to $(\mathcal{L}_{d,\infty})_0.$ Since $M_{\partial_jf}(1-\Delta)^{-\frac12}\in\mathcal{L}_{d,\infty},$ $1\leq j\leq d,$ the assertion follows now from H\"older's inequality.
\end{proof}

The following important lemma explains the role of the operator $A$.

\begin{lem}\label{dbarf abs compute lemma} Let $d\geq 2$ and let $f\in C^\infty_c(\mathbb R^d)$ be real-valued. Then, with $A_k$ defined in Lemma \ref{ak properties lemma}, we have
$$|\dbar f|^2-1\otimes (1-\Delta)^{-\frac12}(\sum_{k=1}^d|A_k|^2)(1-\Delta)^{-\frac12}\in(\mathcal{L}_{\frac{d}{2},\infty})_0.$$
\end{lem}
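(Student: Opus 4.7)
Since $f$ is real and each Riesz transform $R_j := D_j(-\Delta)^{-1/2}$ is self-adjoint, $\dbar f = i\sum_j\gamma_j\otimes[R_j,M_f]$ is self-adjoint, so $|\dbar f|^2 = (\dbar f)^2$. Using the Clifford relation $\gamma_j\gamma_k = \delta_{jk} + \tfrac12[\gamma_j,\gamma_k]$, the antisymmetry of $[\gamma_j,\gamma_k]$ in $(j,k)$, and $[R_j,M_f]^* = -[R_j,M_f]$, one obtains the decomposition
$$|\dbar f|^2 = 1\otimes\sum_j[R_j,M_f]^*[R_j,M_f] \;-\; \tfrac14\sum_{j,k}[\gamma_j,\gamma_k]\otimes[[R_j,M_f],[R_k,M_f]].$$
The plan is to show that the first sum matches $(1-\Delta)^{-1/2}\sum_k|A_k|^2(1-\Delta)^{-1/2}$ modulo $(\mathcal L_{d/2,\infty})_0$, and that the second sum lies in $(\mathcal L_{d/2,\infty})_0$.

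\textbf{Key identity.} From $[R_j,M_f]\sqrt{-\Delta} = -iM_{\partial_jf} - R_j[\sqrt{-\Delta},M_f]$ together with the Leibniz expansion coming from $\sqrt{-\Delta} = \sum_k R_kD_k$ and $[D_k,M_f] = -iM_{\partial_kf}$, one derives the exact identity
$$[R_j,M_f] \;=\; -iA_j(-\Delta)^{-1/2} \;+\; \mathcal E_j, \qquad \mathcal E_j := -R_j\sum_k[R_k,M_f]R_k.$$
The remainder $\mathcal E_j$ lies in $(\mathcal L_{d,\infty})_0$: for $f\in C^\infty_c$, $[R_k,M_f]$ is a pseudodifferential operator of order $-1$ with compactly supported $x$-symbol, hence in $\mathcal L_p$ for every $p>d$ (equivalently, by the Janson--Wolff theorem, $\dbar f\in\mathcal L_p$ for $p>d$), so in $(\mathcal L_{d,\infty})_0$; bounded multiplication by $R_j,R_k$ preserves this.

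\textbf{Substitution and error control.} Plugging the key identity into the scalar piece yields
$$\sum_j[R_j,M_f]^*[R_j,M_f] = \sum_j(-\Delta)^{-1/2}A_j^*A_j(-\Delta)^{-1/2} + R_1,$$
where $R_1$ collects cross terms of the type $\mathcal L_{d,\infty}\cdot(\mathcal L_{d,\infty})_0$ (using $A_j(-\Delta)^{-1/2}\in\mathcal L_{d,\infty}$ by Cwikel) together with $\mathcal E_j^*\mathcal E_j\in(\mathcal L_{d/2,\infty})_0$; thus $R_1\in(\mathcal L_{d/2,\infty})_0$ by H\"older's inequality. Swapping $(-\Delta)^{-1/2}$ for $(1-\Delta)^{-1/2}$ costs a further error in $(\mathcal L_{d/2,\infty})_0$, since $|\xi|^{-1}-(1+|\xi|^2)^{-1/2}\in(L_{d,\infty})_0(\mathbb R^d)$ and Cwikel-type estimates apply. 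For the Clifford-off-diagonal piece, substitution produces a leading term $-A_j(-\Delta)^{-1/2}A_k(-\Delta)^{-1/2}+(j\leftrightarrow k)$ modulo H\"older-controlled errors in $(\mathcal L_{d/2,\infty})_0$; commuting $A_j$ past $(-\Delta)^{-1/2}$ (with $[A_j,(-\Delta)^{-1/2}]\in(\mathcal L_{d,\infty})_0$ via Theorem \ref{commutator theorem} applied to each $M_{\partial_jf}$) reduces this to $-(-\Delta)^{-1/2}[A_j,A_k](-\Delta)^{-1/2}$, which lies in $(\mathcal L_{d/2,\infty})_0$ by Lemma \ref{ak properties lemma}(ii) after one more harmless weight swap.

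\textbf{Main obstacle.} The principal difficulty is careful ideal-membership bookkeeping through these substitutions, above all guaranteeing that $\mathcal E_j$ lies in the smaller ideal $(\mathcal L_{d,\infty})_0$ rather than merely in $\mathcal L_{d,\infty}$, so that all cross-error contributions fall into $(\mathcal L_{d/2,\infty})_0$ upon application of H\"older. This smoothness upgrade for $f\in C^\infty_c$ is not directly furnished by Theorem \ref{commutator theorem} (which only controls $[R_k,M_f](1-\Delta)^{-1/2}$) and must be invoked separately via Janson--Wolff or a direct pseudodifferential argument.
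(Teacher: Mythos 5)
The overall plan is sound and genuinely different from the paper's: you expand $(\dbar f)^2$ directly via the Clifford relations and the anti-self-adjointness of $[R_j,M_f]$, whereas the paper passes through the algebraic identity $|X|^2-|Y|^2=-|X-Y|^2+X^\ast(X-Y)+(X-Y)^\ast X$ applied to $X=\dbar f$, $Y=A(1+\mathcal D^2)^{-1/2}$, using \cite[Theorem 6.3.1]{LMSZ-book} to show $X-Y\in(\mathcal L_{d,\infty})_0$. Your ``key identity'' $[R_j,M_f]=-iA_j(-\Delta)^{-1/2}+\mathcal E_j$ with $\mathcal E_j=-R_j\sum_k[R_k,M_f]R_k$ is correct, and the remainder of the bookkeeping (H\"older, swapping $(-\Delta)^{-1/2}$ for $(1-\Delta)^{-1/2}$, Lemma \ref{ak properties lemma} for the $[A_j,A_k]$ piece) is reasonable in outline.

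However, there is a genuine gap at exactly the point you flag as the ``principal difficulty''. You argue that $[R_k,M_f]\in\mathcal L_p$ for every $p>d$ (true, by Janson--Wolff) and conclude from this that $[R_k,M_f]\in(\mathcal L_{d,\infty})_0$. That implication is false: membership in $\mathcal L_p$ for all $p>d$ does \emph{not} imply $\lim_{t\to\infty}t^{1/d}\mu(t,T)=0$ (consider a decreasing sequence equal to $n_k^{-1/d}$ on dyadic-tower blocks $(n_{k-1},n_k]$ with $n_k=2^{2^k}$; it is in $\ell_p$ for all $p>d$ and in $\ell_{d,\infty}$ but not in $(\ell_{d,\infty})_0$). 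Worse, the conclusion $[R_k,M_f]\in(\mathcal L_{d,\infty})_0$ is actually \emph{false} for nonconstant $f\in C^\infty_c(\mathbb R^d)$: this is exactly the content of Theorem \ref{main cor} of the paper, which gives $\lim t^{1/d}\mu(t,\dbar f)=\kappa_d\|\nabla f\|_d>0$. So you cannot get $\mathcal E_j\in(\mathcal L_{d,\infty})_0$ by crude size estimates on the individual factors $[R_k,M_f]$; there must be a cancellation in the sum $\sum_k[R_k,M_f]R_k$ (its principal symbol is $\propto\sum_k r_k\,\partial_\xi r_k\cdot\nabla f=\frac12\partial_\xi\bigl(\sum_k r_k^2\bigr)\cdot\nabla f=0$). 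The fact $\mathcal E_j\in(\mathcal L_{d,\infty})_0$ is, up to the harmless swap of $(-\Delta)^{-1/2}$ for $(1-\Delta)^{-1/2}$, exactly a component-wise restatement of $\dbar f-A(1+\mathcal D^2)^{-1/2}\in(\mathcal L_{d,\infty})_0$, which the paper simply cites from \cite[Theorem 6.3.1]{LMSZ-book}. You should either cite that result directly or supply a genuine cancellation argument (e.g.\ via the vanishing-principal-symbol observation above together with Cwikel/Solomyak-type estimates for order-$(-2)$ operators); the Janson--Wolff shortcut does not work.
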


\begin{proof} We have (see \cite[Theorem 6.3.1]{LMSZ-book})
$$\dbar f - A(1+\mathcal D^2)^{-\frac12}\in(\mathcal{L}_{d,\infty})_0,\quad A=\sum_{k=1}^d\gamma_k\otimes A_k.$$
This, together with the fact that $\dbar f\in \mathcal L_{d,\infty}$ (see Proposition \ref{upperbound}) and the algebraic identity
\begin{equation}
	\label{eq:algebraic}
	|X|^2-|Y|^2= - |X-Y|^2 + X^\ast (X-Y)  +(X-Y)^{\ast} X \,,
\end{equation}
implies that that
$$|\dbar f|^2 - |A(1+\mathcal D^2)^{-\frac12}|^2\in(\mathcal{L}_{\frac{d}{2},\infty})_0.$$
Clearly,
$$|A(1+\mathcal D^2)^{-\frac12}|^2=\sum_{k,l=1}^d\gamma_k\gamma_l\otimes (1-\Delta)^{-\frac12}A_k^{\ast}A_l(1-\Delta)^{-\frac12}.$$
Using the anticommutation relations of the gamma matrices, we write
\begin{align*}
&~\quad |A(1+\mathcal D^2)^{-\frac12}|^2-1\otimes (1-\Delta)^{-\frac12}(\sum_{k=1}^d|A_k|^2)(1-\Delta)^{-\frac12}\\
&=\sum_{1\leq k<l\leq d}\gamma_k\gamma_l\otimes (1-\Delta)^{-\frac12}(A_k^{\ast}A_l-A_l^{\ast}A_k)(1-\Delta)^{-\frac12}\\
&=-2i\sum_{1\leq k<l\leq d}\gamma_k\gamma_l\otimes (1-\Delta)^{-\frac12} (\Im A_k)\cdot A_l(1-\Delta)^{-\frac12}\\
&\quad +2i\sum_{1\leq k<l\leq d}\gamma_k\gamma_l\otimes (1-\Delta)^{-\frac12} (\Im A_l)\cdot A_k(1-\Delta)^{-\frac12}\\
&\quad +\sum_{1\leq k<l\leq d}\gamma_k\gamma_l\otimes (1-\Delta)^{-\frac12}[A_k,A_l](1-\Delta)^{-\frac12}.
\end{align*}
The assertion now follows from Lemma \ref{ak properties lemma}, Cwikel's estimate and H\"older's inequality.
\end{proof}

\begin{lem}\label{lemma using main thm} For $f\in C^{\infty}_c(\mathbb{R}^d),$ $\chi\in (C_c+\mathbb{C})(\mathbb{R}^d),$ set
$$T=(\sum_{k=1}^d|A_k|^2)^{\frac12}M_{\chi}.$$
Then
$$\lim_{t\to\infty}t^{\frac1d}\mu(t,T(1-\Delta)^{-\frac12})= \kappa_d' \left( \int_{\mathbb R^{d}} |\chi|^d|\nabla f|^d\,dx \right)^\frac1d$$
where
$$
\kappa_d' = (2\pi)^{-1} \left( d^{-1} \int_{\mathbb S^{d-1}} (1-s_d^2)^{\frac d2} d\mathbf s \right)^\frac1d.
$$	
\end{lem}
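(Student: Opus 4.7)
The plan is to realize $T$ as an element of the $C^*$-algebra $\Pi$ which is compactly supported from the right, and then invoke Theorem \ref{main thm} after computing the principal symbol ${\rm sym}(T)$ explicitly.

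\emph{Step 1: Membership in $\Pi$.} Since $f\in C^\infty_c(\mathbb R^d)$, each $\partial_j f\in C_0(\mathbb R^d)\subset\mathcal A_1$, and the functions $g_{kj}(\mathbf s):=s_ks_j$ lie in $C(\mathbb S^{d-1})=\mathcal A_2$, so that
$$A_k=\pi_1(\partial_k f)-\sum_{j=1}^d\pi_2(g_{kj})\pi_1(\partial_j f)\in\Pi.$$
Hence $B:=\sum_k A_k^*A_k$ is a positive element of $\Pi$, so $B^{1/2}\in\Pi$ by the $C^*$-functional calculus, and $T=B^{1/2}\pi_1(\chi)\in\Pi$.

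\emph{Step 2: Compact support from the right.} I would pick $\phi\in C^\infty_c$ with $\phi\equiv 1$ on $\mathrm{supp}(f)$. Since $M_{\partial_j f}\pi_1(1-\phi)=0$, one obtains $A_k\pi_1(\phi)=A_k$, and by taking adjoints $\pi_1(\phi)A_k^*=A_k^*$; hence $B=\pi_1(\phi)B\pi_1(\phi)$. This forces $L_2(\mathbb R^d\setminus\mathrm{supp}(\phi))\subseteq\ker B=\ker B^{1/2}$. Choosing next $\phi'\in C^\infty_c$ equal to $1$ on $\mathrm{supp}(\phi)$, the function $\chi(1-\phi')$ is supported in $\mathbb R^d\setminus\mathrm{supp}(\phi)$, so that $B^{1/2}M_{\chi(1-\phi')}=0$, which rearranges to $T=T\pi_1(\phi')$. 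I expect this step, and specifically the passage from the localization of $B$ to that of $B^{1/2}$, to be the main technical obstacle, since $B^{1/2}$ is only accessible through continuous functional calculus and has no manifest support; the argument must proceed via the kernel.

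\emph{Step 3: Identifying the symbol and evaluating the integral.} Applying Theorem \ref{main thm} then gives the desired limit equal to $d^{-1/d}(2\pi)^{-1}\|{\rm sym}(T)\|_d$. Since ${\rm sym}$ is a $*$-homomorphism, one finds ${\rm sym}(A_k)(\mathbf t,\mathbf s)=\partial_kf(\mathbf t)-s_k(\mathbf s\cdot\nabla f(\mathbf t))$, and a short algebraic simplification using $|\mathbf s|=1$ gives $\sum_k|{\rm sym}(A_k)|^2=|\nabla f|^2-(\mathbf s\cdot\nabla f)^2$, whence
$${\rm sym}(T)(\mathbf t,\mathbf s)=\chi(\mathbf t)\sqrt{|\nabla f(\mathbf t)|^2-(\mathbf s\cdot\nabla f(\mathbf t))^2}.$$
For each $\mathbf t$ with $\nabla f(\mathbf t)\ne 0$, rotating $\mathbb S^{d-1}$ to align $\nabla f(\mathbf t)$ with $e_d$ yields
$$\int_{\mathbb S^{d-1}}\bigl(|\nabla f(\mathbf t)|^2-(\mathbf s\cdot\nabla f(\mathbf t))^2\bigr)^{d/2}\,d\mathbf s=|\nabla f(\mathbf t)|^d\int_{\mathbb S^{d-1}}(1-s_d^2)^{d/2}\,d\mathbf s,$$
and Fubini together with the definition of $\kappa_d'$ then delivers the stated asymptotic constant.
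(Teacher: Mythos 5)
Your proposal is correct and follows essentially the same path as the paper: realize $T$ as a compactly supported (from the right) element of $\Pi$, apply Theorem \ref{main thm}, and evaluate $\|{\rm sym}(T)\|_d$ via the $*$-homomorphism property of ${\rm sym}$ and a rotation on $\mathbb S^{d-1}$. Your Step 2 kernel argument for passing localization from $B$ to $B^{1/2}$ (via $\ker B = \ker B^{1/2}$) makes explicit what the paper compresses into a parenthetical remark about block decomposition — this is the correct formalization, not a genuine obstacle.
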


\begin{proof} 	We will derive the claimed asymptotics by applying Theorem \ref{main thm} to the operator $T.$ Note that $T\in\Pi$ (since $\Pi$ is a $C^\ast$ algebra, or more explicitly, by approximating the square root in the definition of $T$ by polynomials) and that $T$ is compactly supported from the right (indeed, in the decomposition $L_2(\mathbb R^d)=L_2({\rm supp} \nabla f)\oplus L_2(\mathbb R^d\setminus{\rm supp}|\nabla f|)$ the operator $\sum |A_k|^2$ acts nontrivially only in the first term, and so does its square root). Applying Theorem \ref{main thm}, we deduce that
$$\lim_{t\to\infty} t^\frac1d \mu(t,T(1-\Delta)^{-\frac12})  = d^{-\frac1d} (2\pi)^{-1} \| {\rm sym}(T)\|_d.$$
	
It remains to compute the right side. Since ${\rm sym}:\Pi\to\mathcal{A}_1\otimes_{{\rm min}}\mathcal{A}_2$ is a $\ast$-homomorphism, it follows that
$$|{\rm sym}(T)|=(|\chi|\otimes 1)\cdot(\sum_{k=1}^d |{\rm sym} (A_k) |^2)^{\frac12}.$$
For ${\bf t}\in\mathbb{R}^d$ and ${\bf s}\in\mathbb{S}^{d-1},$ we have
$${\rm sym}(A_k)({\bf t},{\bf s})=(\partial_kf)({\bf t}) -s_k\cdot\langle {\bf s},(\nabla f)({\bf t})\rangle.$$
Thus,
\begin{align*}
|{\rm sym}\,(T)({\bf t},{\bf s})|&= |\chi({\bf t})| \left| (\nabla f)({\bf t}) - {\bf s}\cdot\langle{\bf s},(\nabla f)({\bf t})\rangle\right|\\
&=|\chi({\bf t})|\cdot\big(|(\nabla f)({\bf t})|^2-|\langle{\bf s},(\nabla f)({\bf t})\rangle|^2\big)^{\frac12}
\end{align*}
and
\begin{align*}
\| {\rm sym}\,(T) \|_d^d & = \int_{\mathbb{R}^d\times\mathbb{S}^{d-1}}|\chi({\bf t})|^d \big(|(\nabla f)({\bf t})|^2-|\langle{\bf s},(\nabla f)({\bf t})\rangle|^2\big)^{\frac{d}{2}}d{\bf t}d{\bf s} \\
&=\int_{\mathbb{R}^d}|\chi({\bf t})|^d|(\nabla f)({\bf t})|^d\Big(\int_{\mathbb{S}^{d-1}}\big(|e({\bf t})|^2-|\langle{\bf s},e({\bf t})\rangle|^2\big)^{\frac{d}{2}}d{\bf s}\Big) d{\bf t},
\end{align*}
where
$$e({\bf t})=\frac{(\nabla f)({\bf t})}{|(\nabla f)({\bf t})|},\quad {\bf t}\in\mathbb{R}^d.$$
By rotation invariance,
\begin{align*}
\int_{\mathbb{S}^{d-1}}\big(|e({\bf t})|^2-|\langle{\bf s},e({\bf t})\rangle|^2)^{\frac{d}{2}}d{\bf s} & =\int_{\mathbb{S}^{d-1}}\big(1-|\langle{\bf s},e({\bf t})\rangle|^2)^{\frac{d}{2}}d{\bf s} \\
& =\int_{\mathbb{S}^{d-1}}\big(1-|\langle{\bf s},e_d\rangle|^2)^{\frac{d}{2}}d{\bf s} \,,
\end{align*}
which concludes the proof.
\end{proof}

\begin{rem}\label{const}
	On can express the constant $\kappa_d'$ in terms of gamma functions. Indeed, changing coordinates ${\bf s} = ((\sin\theta){\bf s'},\cos\theta)$ with ${\bf s'}\in\mathbb{S}^{d-2}$ and using $d{\bf s} = d{\bf s'} (\sin\theta)^{d-2}d\theta$, one finds	
\begin{align*}
	\int_{\mathbb{S}^{d-1}}\big(1- s_d^2)^{\frac{d}{2}}d{\bf s}
	= {\rm Vol}(\mathbb{S}^{d-2}) \int_0^\pi \sin^{2d-2}\theta \,d\theta.
\end{align*}
Using
$$
{\rm Vol}(\mathbb{S}^{d-2}) = \frac{2\pi^\frac{d-1}{2}}{\Gamma(\frac{d-1}2)}
$$
and the beta function identity
$$
\int_0^\pi \sin^{2d-2}\theta \,d\theta = B(\tfrac12,d-\tfrac12) = \frac{\sqrt\pi\,\Gamma(d-\frac12)}{(d-1)!} \,,
$$
we obtain
$$
\kappa_d' = (2\pi)^{-1} \left( \frac{2\pi^\frac{d}{2}}{\Gamma(\frac{d-1}2)}\, \frac{\Gamma(d-\frac12)}{d!} \right)^\frac 1d.
$$
Similarly,
\begin{equation}
	\label{eq:const}
	\kappa_d = N^\frac1d \kappa_d' = (2\pi)^{-1} \left(N\, \frac{2\pi^\frac{d}{2}}{\Gamma(\frac{d-1}2)}\, \frac{\Gamma(d-\frac12)}{d!} \right)^\frac 1d.
\end{equation}
\end{rem}

We are now in position to compute the asymptotics of $\dbar f$. We recall that the constant $\kappa_d$ is defined in Theorem \ref{main cor}; see also \eqref{eq:const}.

\begin{prop}\label{asymptotics}
Let $d\geq 2$, let $f\in C^\infty_c(\mathbb R^d)$ be real-valued. For every $\chi\in \mathbb C + C_c(\mathbb{R}^d),$ we have
$$\lim_{t\to\infty} t^\frac1d \mu(t,\dbar f\cdot (1\otimes M_{\chi})) = \kappa_d \left( \int_{\mathbb R^{d}} |\chi|^d|\nabla f|^d\,dx \right)^\frac1d$$
and
$$
\lim_{t\to\infty} t^\frac1d \mu(t,(1\otimes M_{\bar{\chi}})\cdot\dbar f\cdot (1\otimes M_{\chi})) = \kappa_d \left( \int_{\mathbb R^{d}} |\chi|^{2d}|\nabla f|^d\,dx \right)^\frac1d.
$$
\end{prop}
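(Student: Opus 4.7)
The plan is to reduce both limits to Lemma \ref{lemma using main thm} by using Lemma \ref{dbarf abs compute lemma} as a bridge between $\dbar f$ and the scalar operator $S := \bigl(\sum_{k=1}^d |A_k|^2\bigr)^{1/2}$, and by handling the multiplier $M_\chi$ through commutator tricks.

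For the first limit I would start from $|\dbar f(1\otimes M_\chi)|^2 = (1\otimes M_{\bar\chi})|\dbar f|^2(1\otimes M_\chi)$ and insert Lemma \ref{dbarf abs compute lemma}; since multiplication by the bounded operators $1\otimes M_\chi$ and $1\otimes M_{\bar\chi}$ preserves $(\mathcal L_{d/2,\infty})_0$, this yields
$$
|\dbar f(1\otimes M_\chi)|^2 - 1_N\otimes |S(1-\Delta)^{-1/2}M_\chi|^2 \in (\mathcal L_{d/2,\infty})_0.
$$
Combining Lemma \ref{bs sep lemma} at exponent $d/2$, the identity $\mu(t,1_N\otimes X)=\mu(\lfloor t/N\rfloor, X)$, and $\mu(s,|Y|^2)=\mu(s,Y)^2$ reduces the problem to computing $\lim_s s^{1/d}\mu(s,S(1-\Delta)^{-1/2}M_\chi)$. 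The difference $S(1-\Delta)^{-1/2}M_\chi - SM_\chi(1-\Delta)^{-1/2} = -S[M_\chi,(1-\Delta)^{-1/2}]$ lies in $(\mathcal L_{d,\infty})_0$: writing $\chi=c+\chi_0$ with $\chi_0\in C_c(\mathbb R^d)$, the constant commutes, and the second assertion of Theorem \ref{commutator theorem} with $g\equiv 1$ places $[\pi_1(\chi_0),(1-\Delta)^{-1/2}]$ in $(\mathcal L_{d,\infty})_0$. Lemmas \ref{bs sep lemma} and \ref{lemma using main thm} applied to $T=SM_\chi$ then produce $\kappa_d'\|\chi\,\nabla f\|_d$, and the relation $\kappa_d = N^{1/d}\kappa_d'$ from Remark \ref{const} gives the stated coefficient.

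For the second limit my strategy is to establish
$$
(1\otimes M_{\bar\chi})\dbar f(1\otimes M_\chi) - (1\otimes M_{|\chi|^2})\dbar f \in (\mathcal L_{d,\infty})_0,
$$
which reduces the second asymptotic to the first applied with $\chi$ replaced by $|\chi|^2\in\mathbb C+C_c(\mathbb R^d)$ (the algebra being closed under products), after using anti-selfadjointness of $\dbar f$ for real $f$ to convert $\mu((1\otimes M_{|\chi|^2})\dbar f)$ into $\mu(\dbar f(1\otimes M_{|\chi|^2}))$. Substituting $\dbar f \equiv A(1+\mathcal D^2)^{-1/2}$ modulo $(\mathcal L_{d,\infty})_0$ (from the proof of Lemma \ref{dbarf abs compute lemma}), the claim reduces to showing, for each $k$,
$$
M_{\bar\chi}A_k(1-\Delta)^{-1/2}M_\chi - M_{|\chi|^2}A_k(1-\Delta)^{-1/2} \in (\mathcal L_{d,\infty})_0.
$$
I would split this difference as $M_{\bar\chi}A_k[(1-\Delta)^{-1/2},M_\chi] + M_{\bar\chi}[A_k,M_\chi](1-\Delta)^{-1/2}$; the first summand is controlled exactly as above, and for the second one observes that $[A_k,M_\chi] = -\sum_j[\pi_2(s_ks_j),M_\chi]M_{\partial_j f}$ is compact by Lemma \ref{commutator is compact} (since $A_k,M_\chi\in\Pi$), while Cwikel or Solomyak places $M_{\partial_j f}(1-\Delta)^{-1/2}$ in $\mathcal L_{d,\infty}$, so the product is a compact operator times an element of $\mathcal L_{d,\infty}$, which lies in $(\mathcal L_{d,\infty})_0$.

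The main obstacle is the commutator bookkeeping: every remainder must land in the smaller ideal $(\mathcal L_{d,\infty})_0$ (or $(\mathcal L_{d/2,\infty})_0$) rather than merely in $\mathcal L_{d,\infty}$, which is precisely where the cancellations afforded by Theorem \ref{commutator theorem} and the compactness inside $\Pi$ from Lemma \ref{commutator is compact} become indispensable; once these ideal memberships are in hand, the remainder of the argument is routine application of the Birman--Solomyak-type lemmas of Section 3.
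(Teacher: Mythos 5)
Your argument is essentially the paper's: for the first limit you insert Lemma \ref{dbarf abs compute lemma} after writing $|\dbar f(1\otimes M_\chi)|^2 = (1\otimes M_{\bar\chi})|\dbar f|^2(1\otimes M_\chi)$, push $M_\chi$ past $(1-\Delta)^{-1/2}$ via Theorem \ref{commutator theorem} with $g\equiv 1$, and invoke Lemma \ref{lemma using main thm}; this is exactly the paper's route, modulo renaming $S$.

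For the second limit you differ only cosmetically: you move $M_{|\chi|^2}$ to the left, obtaining $(1\otimes M_{\bar\chi})\dbar f(1\otimes M_\chi) - (1\otimes M_{|\chi|^2})\dbar f \in(\mathcal L_{d,\infty})_0$, then exploit that $\dbar f$ is self-adjoint for real $f$ (it is self-adjoint, not anti-self-adjoint — $i$ times an anti-self-adjoint commutator — but $\mu(T)=\mu(T^\ast)$ either way, so this slip is harmless) to pass back to $\dbar f(1\otimes M_{|\chi|^2})$ and apply the first asymptotic with $\chi\mapsto|\chi|^2$. The paper keeps $M_{|\chi|^2}$ on the right from the outset and shows $(1\otimes M_{\bar\chi})\dbar f(1\otimes M_\chi) - \dbar f(1\otimes M_{|\chi|^2})\in(\mathcal L_{d,\infty})_0$, avoiding the detour through $T^\ast$. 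Both variants require the same ideal estimates: the substitution $\dbar f\equiv A(1+\mathcal D^2)^{-1/2}$ mod $(\mathcal L_{d,\infty})_0$ from \cite[Theorem 6.3.1]{LMSZ-book}, the bound on $[A_k,M_\chi](1-\Delta)^{-1/2}$ via compactness in $\Pi$ (Lemma \ref{commutator is compact}) paired with Cwikel/Solomyak, and the commutator estimate from Theorem \ref{commutator theorem}. Your decomposition of $[A_k(1-\Delta)^{-1/2},M_\chi]$ into the two summands is valid, and your closure observation that $\mathbb C+C_c(\mathbb R^d)$ is stable under $\chi\mapsto|\chi|^2$ is correct and necessary. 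I see no gap.
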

\begin{proof}
	Set
$$
S=(\sum_{k=1}^d|A_k|^2)^{\frac12}(1-\Delta)^{-\frac12}M_{\chi}
$$
and let $T$ be as in Lemma \ref{lemma using main thm}. By Theorem \ref{commutator theorem}, we have
$$S-T(1-\Delta)^{-\frac12}=(\sum_{k=1}^d|A_k|^2)^{\frac12}\cdot [(1-\Delta)^{-\frac12},M_{\chi}]\in(\mathcal{L}_{d,\infty})_0.$$
By Lemma \ref{lemma using main thm} and Lemma \ref{bs sep lemma}, one has
$$\lim_{t\to\infty}t^{\frac1d}\mu(t,S)= \kappa_d' \left( \int_{\mathbb R^{d}} |\chi|^d|\nabla f|^d\,dx \right)^\frac1d.$$
In other words,
\begin{equation}\label{asymptotics eq0}
\lim_{t\to\infty}t^{\frac2d}\mu(t,|S|^2)=(\kappa_d')^2 \left( \int_{\mathbb R^{d}} |\chi|^d|\nabla f|^d\,dx \right)^\frac2d.
\end{equation}	
	
By Lemma \ref{dbarf abs compute lemma}, we have
$$|\dbar f\cdot (1\otimes M_{\chi})|^2-1\otimes |S|^2\in(\mathcal{L}_{\frac{d}{2},\infty})_0.$$
It follows now from \eqref{asymptotics eq0} and Lemma \ref{bs sep lemma} that one has
$$
\lim_{t\to\infty}t^{\frac2d}\mu(t,|\dbar f\cdot (1\otimes M_{\chi})|^2)= N^{\frac2d} (\kappa_d')^2 \left( \int_{\mathbb R^{d}} |\chi|^d|\nabla f|^d\,dx \right)^\frac2d.
$$
Since $N^\frac1d \kappa_d' = \kappa_d$, we obtain the first assertion.

To prove the second one, we use again the fact \cite[Theorem 6.3.1]{LMSZ-book} that
$$
B = \dbar f - A(1+\mathcal{D}^2)^{-\frac12}+B \in (\mathcal{L}_{d,\infty})_0.
$$
Thus,
\begin{align*}&~\quad 
(1\otimes M_{\bar{\chi}})\cdot\dbar f\cdot (1\otimes M_{\chi})-\dbar f\cdot (1\otimes M_{|\chi|^2})\\
&=[(1\otimes M_{\bar{\chi}}),\dbar f]\cdot (1\otimes M_{\chi})\\
&=[(1\otimes M_{\bar{\chi}}),B]\cdot (1\otimes M_{\chi})+\sum_{k=1}^d(\gamma_k\otimes [M_{\bar{\chi}},A_k])\cdot (1\otimes M_{\chi})\\
&=[(1\otimes M_{\bar{\chi}}),B]\cdot (1\otimes M_{\chi})+\sum_{k,j=1}^d(\gamma_k\otimes [M_{\bar{\chi}},\frac{D_kD_j}{\Delta}])\cdot (1\otimes M_{\partial_jf\cdot \chi}).\end{align*}
The first summand on the right hand side belongs to $(\mathcal{L}_{d,\infty})_0$ since so does $B.$ The second summand on the right hand side belongs to $(\mathcal{L}_{d,\infty})_0$ by Theorem \ref{commutator theorem}. Hence,
$$(1\otimes M_{\bar{\chi}})\cdot\dbar f\cdot (1\otimes M_{\chi})-\dbar f\cdot (1\otimes M_{|\chi|^2})\in(\mathcal{L}_{d,\infty})_0.$$
The second assertion now follows from the first one.
\end{proof}

\begin{cor}\label{locbound}
Let $d\geq 2$, let $f\in C^\infty_c(\mathbb{R}^d)$ and $\chi\in C_c(\mathbb{R}^d)$. Then we have
$$\|(1\otimes M_{\bar{\chi}})\cdot\dbar f\cdot(1\otimes M_{\chi})\|_{d,\infty} \geq c_d \left( \int_{\mathbb R^{d}} |\chi|^{2d}|\nabla f|^d\,dx\right)^\frac1d.$$
\end{cor}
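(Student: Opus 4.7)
The plan is to obtain this bound as an immediate corollary of Proposition \ref{asymptotics}, using the elementary observation that the weak Schatten quasi-norm controls any asymptotic singular value: for any $T \in \mathcal{L}_{d,\infty}$,
$$\|T\|_{d,\infty} \;=\; \sup_{t>0} t^{1/d}\mu(t,T) \;\geq\; \limsup_{t\to\infty} t^{1/d}\mu(t,T).$$
If $f$ is real-valued, then applying this to $T = (1\otimes M_{\bar\chi})\dbar f (1\otimes M_\chi)$ and invoking the second limit formula in Proposition \ref{asymptotics} directly yields the desired inequality with $c_d = \kappa_d$.

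For complex-valued $f \in C^\infty_c(\mathbb{R}^d)$, I would decompose $f = f_1 + if_2$ into real and imaginary parts, so that $\dbar f = \dbar f_1 + i \dbar f_2$ with each $\dbar f_j$ self-adjoint. A short computation using self-adjointness shows that, for $A := (1\otimes M_{\bar\chi})\dbar f(1\otimes M_\chi)$,
$$\Re A = (1\otimes M_{\bar\chi})\dbar f_1(1\otimes M_\chi), \qquad \Im A = (1\otimes M_{\bar\chi})\dbar f_2(1\otimes M_\chi),$$
regardless of whether $\chi$ is real. Lemma \ref{realpart} then gives $\|\Re A\|_{d,\infty}, \|\Im A\|_{d,\infty} \leq 2^{1/d}\|A\|_{d,\infty}$, while the real-valued case already established gives
$$\|\Re A\|_{d,\infty}^d + \|\Im A\|_{d,\infty}^d \;\geq\; \kappa_d^d \int_{\mathbb R^d} |\chi|^{2d} \bigl(|\nabla f_1|^d + |\nabla f_2|^d\bigr)\,dx.$$
Combining these with the elementary pointwise bound $|\nabla f|^d = (|\nabla f_1|^2 + |\nabla f_2|^2)^{d/2} \leq 2^{d/2-1}(|\nabla f_1|^d + |\nabla f_2|^d)$ (valid for $d \geq 2$ by convexity) produces the claimed inequality with a constant $c_d$ depending only on $d$.

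There is essentially no obstacle in this argument: the content is already packed into Proposition \ref{asymptotics}, and the reduction from complex to real $f$ is a routine application of Lemma \ref{realpart}. The only point requiring a moment's care is verifying that sandwiching by $1\otimes M_{\bar\chi}$ on the left and $1\otimes M_\chi$ on the right intertwines correctly with taking real and imaginary parts, which it does precisely because $M_\chi^* = M_{\bar\chi}$.
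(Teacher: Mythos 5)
Your proof is correct and follows essentially the same route as the paper: the real-valued case is obtained from Proposition~\ref{asymptotics} via the trivial bound $\|T\|_{d,\infty}\geq\limsup_{t\to\infty} t^{1/d}\mu(t,T)$, and the complex case is reduced to it by writing $\Re A = (1\otimes M_{\bar\chi})\,\dbar(\Re f)\,(1\otimes M_\chi)$, $\Im A = (1\otimes M_{\bar\chi})\,\dbar(\Im f)\,(1\otimes M_\chi)$ and invoking Lemma~\ref{realpart}. The only difference is cosmetic and lies in the final recombination: the paper uses the triangle inequality in $L_d(\mathbb{R}^d)$ applied to $\nabla f = \nabla\Re f + i\,\nabla\Im f$, whereas you use the pointwise convexity bound $(|a|^2+|b|^2)^{d/2}\leq 2^{d/2-1}(|a|^d+|b|^d)$; both yield the inequality with a constant depending only on $d$.
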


\begin{proof} If $f$ is real-valued, the claimed bound is an immediate consequence of Proposition \ref{asymptotics}. For general, complex-valued $f$ we have
$$\Re((1\otimes M_{\bar{\chi}})\cdot\dbar f\cdot(1\otimes M_{\chi}))=(1\otimes M_{\bar{\chi}})\cdot\dbar \Re f\cdot(1\otimes M_{\chi}),$$	
$$\Im((1\otimes M_{\bar{\chi}})\cdot\dbar f\cdot(1\otimes M_{\chi}))=(1\otimes M_{\bar{\chi}})\cdot\dbar \Im f\cdot(1\otimes M_{\chi}).$$	
We now deduce from Lemma \ref{realpart} that
\begin{align*}
& ~\quad \|(1\otimes M_{\bar{\chi}})\cdot\dbar \Re f\cdot(1\otimes M_{\chi})\|_{d,\infty}  + \|(1\otimes M_{\bar{\chi}})\cdot\dbar \Im f\cdot(1\otimes M_{\chi})\|_{d,\infty}  \\
	&  \leq C_d \|(1\otimes M_{\bar{\chi}})\cdot\dbar f\cdot(1\otimes M_{\chi})\|_{d,\infty}  \,.
	\end{align*}
	Meanwhile, by the triangle inequality in $L_d$, we have
	$$
	\left( \int_{\mathbb R^{d}} |\chi|^{2d}|\nabla f|^d\,dx\right)^\frac1d
	\leq \left( \int_{\mathbb R^{d}} |\chi|^{2d}|\nabla \Re f|^d\,dx\right)^\frac1d + \left( \int_{\mathbb R^{d}} |\chi|^{2d}|\nabla \Im f|^d\,dx\right)^\frac1d.
	$$
	Thus, the bound in the complex case follows from that in the real case.
\end{proof}

We are now in a position to prove the second part of Theorem \ref{mainiff}.

\begin{prop}\label{lowerbound}
Let $d\geq 2$ and $f\in BMO(\mathbb R^d)$ with $\dbar f\in\mathcal L_{d,\infty}$. Then $f\in\dot W^1_d(\mathbb R^d)$ with
$$\|\dbar f\|_{d,\infty} \geq c_d\|\nabla f\|_d.$$
\end{prop}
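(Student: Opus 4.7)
The strategy is to bootstrap Corollary \ref{locbound} (which is stated only for $f \in C^\infty_c$) to arbitrary $f \in BMO(\mathbb{R}^d)$ with $\dbar f \in \mathcal{L}_{d,\infty}$, via a two-step approximation: first mollify to obtain a smooth function, then localize spatially to reduce to the compactly supported smooth case. The final step is to pass to the limit using weak compactness in $L_d$.

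For the mollification step, fix a standard non-negative mollifier $\rho_\epsilon$ and set $f_\epsilon = f * \rho_\epsilon$. Since $f \in BMO \subset L_{1,\mathrm{loc}}$, this is well defined and smooth, and in fact $f_\epsilon \in BMO$. Using that $\mathrm{sgn}\,\mathcal{D}$ commutes with translations $T_y$ on the spatial factor, a direct calculation gives
$$\dbar f_\epsilon = \int_{\mathbb{R}^d} \rho_\epsilon(y)\, (1 \otimes T_y)\, \dbar f\, (1\otimes T_{-y})\, dy$$
as a Bochner integral. Since $(1\otimes T_y)$ is unitary, each integrand has the same $\mathcal{L}_{d,\infty}$-quasinorm as $\dbar f$. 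Because $d \geq 2 > 1$, the quasi-norm on $\mathcal{L}_{d,\infty}$ is equivalent to a genuine norm, so the triangle inequality under Bochner integration yields
$$\|\dbar f_\epsilon\|_{d,\infty} \leq C_d \|\dbar f\|_{d,\infty}$$
uniformly in $\epsilon > 0$.

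To apply Corollary \ref{locbound} to the smooth (but not compactly supported) function $f_\epsilon$, pick $\chi \in C_c(\mathbb{R}^d)$ and any $\psi \in C^\infty_c(\mathbb{R}^d)$ with $\psi \equiv 1$ on $\mathrm{supp}(\chi)$. Then $f_\epsilon \psi \in C^\infty_c(\mathbb{R}^d)$, so Corollary \ref{locbound} applies. The key observation is that the sandwich
$$(1\otimes M_{\bar\chi})\, \dbar(f_\epsilon\psi)\, (1\otimes M_\chi) = (1\otimes M_{\bar\chi})\,\dbar f_\epsilon\,(1\otimes M_\chi),$$
since the additional term $\dbar(f_\epsilon(1-\psi))$ is annihilated on both sides because $\bar\chi(1-\psi) = \chi(1-\psi) = 0$. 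Moreover, $\nabla(f_\epsilon\psi) = \nabla f_\epsilon$ on $\mathrm{supp}(\chi)$. Combining these with the trivial bound $\|(1\otimes M_{\bar\chi})\,\dbar f_\epsilon\,(1\otimes M_\chi)\|_{d,\infty} \leq \|\chi\|_\infty^2 \|\dbar f_\epsilon\|_{d,\infty}$ and the mollification bound above, we obtain
$$c_d \left( \int_{\mathbb{R}^d} |\chi|^{2d} |\nabla f_\epsilon|^d \,dx \right)^{1/d} \leq \|\chi\|_\infty^2\, C_d \|\dbar f\|_{d,\infty}.$$
Choosing $\chi = \chi_R$ with $|\chi_R| \leq 1$, $\chi_R \equiv 1$ on $B_R$, and letting $R \to \infty$ by monotone convergence yields
$$\|\nabla f_\epsilon\|_d \leq c_d^{-1} C_d \|\dbar f\|_{d,\infty}$$
uniformly in $\epsilon > 0$.

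Finally, since $d \geq 2$, $L_d(\mathbb{R}^d,\mathbb{C}^d)$ is reflexive, so $\{\nabla f_\epsilon\}$ has a weakly convergent subsequence with limit $g \in L_d$. Since $f_\epsilon \to f$ in $L_1^{\mathrm{loc}}$ (standard for BMO functions), testing against $\varphi \in C^\infty_c$ shows $g = \nabla f$ in the distributional sense. Thus $f \in \dot{W}^1_d(\mathbb{R}^d)$ and, by weak lower semicontinuity of the norm,
$$\|\nabla f\|_d \leq \liminf_{\epsilon \to 0} \|\nabla f_\epsilon\|_d \leq c_d^{-1} C_d \|\dbar f\|_{d,\infty},$$
as claimed. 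The main technical obstacle is the first step — controlling $\|\dbar f_\epsilon\|_{d,\infty}$ by $\|\dbar f\|_{d,\infty}$ — which requires using the normability of $\mathcal{L}_{d,\infty}$ for $d > 1$ to legitimize the Bochner integral triangle inequality.
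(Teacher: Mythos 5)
Your proposal follows essentially the same structure as the paper's proof: mollify, control $\|\dbar (f\ast\rho_\epsilon)\|_{d,\infty}$ uniformly, localize with a cutoff so that Corollary \ref{locbound} can be applied to a compactly supported smooth function, send the cutoff to infinity, and conclude by passing to a weak limit in $L_d$. The only place where you deviate is the justification of the mollification bound $\|\dbar f_\epsilon\|_{d,\infty}\leq C_d\|\dbar f\|_{d,\infty}$: the paper simply cites a majorization lemma from \cite{LMSZ}, whereas you attempt a direct argument via Bochner integration.

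That step, as written, has a genuine gap. The identity $\dbar f_\epsilon = \int_{\mathbb R^d}\rho_\epsilon(y)\,(1\otimes T_y)\,\dbar f\,(1\otimes T_{-y})\,dy$ is correct, but to interpret it as a Bochner integral in $\mathcal L_{d,\infty}$ you need the integrand to be Bochner measurable, which by Pettis's theorem requires essentially separable range. The issue is not normability (which you correctly note holds for $d>1$) but measurability: $\mathcal L_{d,\infty}$ is non-separable, and the map $y\mapsto (1\otimes T_y)\dbar f(1\otimes T_{-y})$ is only strong-operator continuous, not norm continuous, for a general $\dbar f\in\mathcal L_{d,\infty}$. (It would be norm continuous if $\dbar f$ were in the separable subideal $(\mathcal L_{d,\infty})_0$, but that is precisely not assumed here.) So the Bochner integral triangle inequality cannot be invoked directly. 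The argument is easily repaired by replacing the Bochner integral with a weak-$\ast$ (Gelfand) integral, using that $\mathcal L_{d,\infty}$ is the Banach dual of the separable ideal $\mathcal L_{d',1}$; the inequality $\|\int F\|\leq\int\|F\|$ persists for Gelfand integrals and requires only weak-$\ast$ measurability of the integrand, which holds by strong-operator continuity. Alternatively, one can argue via submajorization of singular values for averages of unitary conjugates, which is most likely the content of the cited \cite[Lemma 18]{LMSZ}. The remainder of your argument --- the cancellation $(1\otimes M_{\bar\chi})\dbar(f_\epsilon(1-\psi))(1\otimes M_\chi)=0$, passage to $R\to\infty$, and the weak compactness step (which makes explicit what the paper calls a ``standard argument'') --- is correct.
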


\begin{proof}
	Let $0\leq\Phi\in C^\infty_c(\mathbb{R}^d)$ with $\|\Phi\|_1=1$ and set $\Phi_t({\bf t}):=t^{-d} \Phi(t^{-1}{\bf t}).$ We have $\Phi_t\ast f \in C^\infty(\mathbb{R}^d).$ Let $\chi\in C^\infty_c(\mathbb{R}^d)$ with $|\chi|\leq 1.$ Given $\chi$ we choose a $\tilde\chi\in C^\infty_c(\mathbb{R}^d)$ with $\tilde\chi \chi =\chi.$ We have $\tilde{\chi}\cdot(\Phi_t\ast f)\in C^{\infty}_c(\mathbb{R}^d)$ and
\begin{equation}\label{compactification eq}
(1\otimes M_{\bar{\chi}})\cdot \dbar(\Phi_t\ast f)\cdot (1\otimes M_{\chi}) =(1\otimes M_{\bar{\chi}})\cdot \dbar(\tilde{\chi}\cdot(\Phi_t\ast f))\cdot (1\otimes M_{\chi}).
\end{equation}
	
By a majorization argument (see \cite[Lemma 18]{LMSZ}), we have $\dbar (\Phi_t\ast f) \in \mathcal{L}_{d,\infty}$ and
$$\|\dbar(\Phi_t\ast f)\|_{d,\infty} \leq C_d \|\dbar f\|_{d,\infty} \,.$$
Using \eqref{compactification eq}, we obtain
\begin{equation}\label{lowerbound eq0}
\|(1\otimes M_{\bar{\chi}})\cdot \dbar(\tilde{\chi}\cdot(\Phi_t\ast f))\cdot (1\otimes M_{\chi})\|\leq C_d\|\dbar f\|_{d,\infty} \,.
\end{equation}
Applying Corollary \ref{locbound} to $\tilde{\chi}\cdot(\Phi_t\ast f)\in C^{\infty}_c(\mathbb{R}^d),$ we obtain
\begin{equation}\label{lowerbound eq1}
\|(1\otimes M_{\bar{\chi}})\cdot\dbar(\tilde{\chi}\cdot(\Phi_t\ast f))\cdot(1\otimes M_{\chi})\|_{d,\infty} \geq c_d\||\chi|^2\cdot\nabla(\tilde{\chi}\cdot(\Phi_t\ast f))\|_d \,.
\end{equation}
Combining \eqref{lowerbound eq0} and \eqref{lowerbound eq1} and taking into account that $\nabla\tilde{\chi}=0$ near the support of $\chi,$ we obtain
$$C_d\|\dbar f\|_{d,\infty}\geq c_d\||\chi|^2\cdot\nabla(\Phi_t\ast f)\|_d \,.$$
Taking the supremum over all $\chi\in C^\infty_c(\mathbb{R}^d)$ with $|\chi|\leq 1,$ we deduce that $\nabla(\Phi_t\ast f)\in L_d(\mathbb{R}^d)$ with
$$C_d\|\dbar f\|_{d,\infty}\geq c_d\|\nabla(\Phi_t\ast f)\|_d \,.$$
Since $\Phi_t\ast f\to f$ in $L_{1,{\rm loc}}(\mathbb{R}^d),$ this implies by a standard argument that $f$ is weakly differentiable with $\nabla f\in L_d(\mathbb R^d)$ and
$$C_d\|\dbar f\|_{d,\infty}\geq c_d\|\nabla f\|_d \,,$$
as claimed.
\end{proof}

It remains to prove the validity of the asymptotics under the minimal regularity assumption.

\begin{proof}[Proof of Theorem \ref{main cor}] 
	By \cite[Theorem 11.43]{Leoni-book}, there is a sequence $(f_n)_{n\geq0}\subset C_c^\infty(\mathbb{R}^d)$ with $\nabla f_n\to \nabla f$ in $L_d$. By Proposition \ref{upperbound}, we have $\dbar f_n\to \dbar f$ in $\mathcal{L}_{d,\infty}$ as $n\to\infty.$ By Proposition \ref{asymptotics} (with $\chi=1$), we have
$$\lim_{t\to\infty} t^\frac1d \mu(t,\dbar f_n) = \kappa_d \|\nabla f_n\|_{d},\quad n\geq0.$$	
The assertion now follows from Lemma \ref{bs limit lemma}.	
\end{proof}

\begin{proof}[Proof of Corollary \ref{corconst}] Let $f\in BMO(\mathbb{R}^d)$ be such that
$$\lim_{t\to\infty}t^{\frac1d}\mu(t,\dbar f)=0.$$
In particular, we have $\dbar f\in \mathcal{L}_{d,\infty}.$ By Theorem \ref{mainiff}, we have $f\in\dot W^1_d(\mathbb R^d).$

If $f$ is real-valued, we deduce then from Theorem \ref{main cor} that $\|\nabla f\|_d=0$, that is, $f$ is constant. For complex-valued $f$, we apply Lemma \ref{realpart} and deduce that
	$$
	\limsup_{t\to\infty} t^\frac1d \mu(t,\dbar\Re f) \,,\ \limsup_{t\to\infty} t^\frac1d \mu(t,\dbar\Im f) \leq 2^{\frac1d}\limsup_{t\to\infty} t^\frac1d \mu(t,\dbar f) = 0 \,,
	$$
	so the assertion follows from that in the real-valued case.
\end{proof}


\end{document}